\begin{document}

\newcommand{\bwr}{\boldsymbol{\wr}}


\newenvironment{nmath}{\begin{center}\begin{math}}{\end{math}\end{center}}

\newtheorem{thm}{Theorem}[section]
\newtheorem{lem}[thm]{Lemma}
\newtheorem{remark}[thm]{Remark}
\newtheorem{prop}[thm]{Proposition}
\newtheorem{cor}[thm]{Corollary}
\newtheorem{conj}[thm]{Conjecture}
\newtheorem{dfn}[thm]{Definition}
\newtheorem{prob}[thm]{Problem}
\newtheorem{ques}[thm]{Question}

\newcommand{\f}{\varphi}
\newcommand{\A}{\mathcal{A}}
\newcommand{\B}{\mathcal{B}}
\newcommand{\K}{\mathcal{K}}
\newcommand{\TSP}{\mathrm{TSP}}
\newcommand{\F}{\mathbb{F}}
\newcommand{\FF}{\mathcal{F}}
\newcommand{\Lip}{\mathrm{Lip}}
\newcommand{\1}{\mathbf{1}}
\newcommand{\s}{\sigma}
\renewcommand{\P}{\mathcal{P}}
\renewcommand{\O}{\Omega}
\renewcommand{\S}{\Sigma}
\renewcommand{\Pr}{\mathbb{P}}
\renewcommand{\approx}{\asymp}
\newcommand{\T}{\mathrm{T}}
\newcommand{\co}{\mathrm{co}}
\newcommand{\Isom}{\mathrm{Isom}}
\newcommand{\edge}{\mathrm{edge}}
\renewcommand{\span}{\mathrm{span}}
\newcommand{\0}{\mathbf{0}}
\newcommand{\bounded}{\mathrm{bounded}}
\newcommand{\e}{\varepsilon}
\newcommand{\im}{\mathrm{i}}
\newcommand{\restrict}{\upharpoonright}
\newcommand{\supp}{{\mathrm{\bf supp}}}
\renewcommand{\l}{\lambda}
\newcommand{\U}{\mathcal{U}}
\newcommand{\M}{\mathscr{M}}
\newcommand{\calH}{\mathcal{H}}
\newcommand{\G}{\Gamma}
\newcommand{\g}{\gamma}
\renewcommand{\L}{\mathscr{L}}
\newcommand{\hcf}{\mathrm{hcf}}
\renewcommand{\a}{\alpha}
\newcommand{\N}{\mathbb{N}}
\newcommand{\R}{\mathbb{R}}
\newcommand{\Z}{\mathbb{Z}}
\newcommand{\C}{\mathbb{C}}

\newcommand{\E}{\mathbb{E}}
\newcommand{\alp}{\alpha^*}

\newcommand{\bb}[1]{\mathbb{#1}}
\renewcommand{\rm}[1]{\mathrm{#1}}
\renewcommand{\cal}[1]{\mathcal{#1}}

\newcommand{\fin}{\nolinebreak\hspace{\stretch{1}}$\lhd$}

\title{$L_p$ compression, traveling salesmen, and stable walks\\{\small Dedicated with admiration to the memory of Oded Schramm}}
\author{Assaf Naor\footnote{Research supported in part by NSF grants DMS-0528387, CCF-0635078 and CCF-0832795, BSF grant 2006009, and the Packard Foundation.
}\\Courant
Institute\\{\tt naor@cims.nyu.edu} \and Yuval Peres \\Microsoft
Research\\{\tt peres@microsoft.com}}
\date{}

\maketitle

\begin{abstract}
We show that if $H$ is a group of polynomial growth whose growth
rate is at least quadratic then the $L_p$ compression of the wreath
product $\Z\bwr H$ equals
$\max\left\{\frac{1}{p},\frac{1}{2}\right\}$. We also show that the
$L_p$ compression of $\Z\bwr \Z$ equals
$\max\left\{\frac{p}{2p-1},\frac23\right\}$ and the $L_p$
compression of $(\Z\bwr\Z)_0$ (the zero section of $\Z\bwr \Z$,
equipped with the metric induced from $\Z\bwr \Z$) equals
$\max\left\{\frac{p+1}{2p},\frac34\right\}$. The fact that the
Hilbert compression exponent of $\Z\bwr\Z$ equals $\frac23$ while
the Hilbert compression exponent of $(\Z\bwr\Z)_0$ equals $\frac34$
is used to show that there exists a Lipschitz function
$f:(\Z\bwr\Z)_0\to L_2$ which cannot be extended to a Lipschitz
function defined on all of $\Z\bwr \Z$.
\end{abstract}


\section{Introduction}

Let $G$ be an infinite group which is generated by a finite
symmetric set $S\subseteq G$ and let $d_G$ denote the left-invariant
word metric induced by $S$ (formally we should use the notation
$d_S$, but all of our statements below will be independent of the
generating set). Assume for the moment that the metric space
$(G,d_G)$ does not admit a bi-Lipschitz embedding into Hilbert
space\footnote{This assumption is not very restrictive, and in fact
it is conjectured that if $(G,d_G)$ does admit a bi-Lipschitz
embedding into Hilbert space then $G$ must have an Abelian subgroup
of finite index. We refer to~\cite{dCTV07} for more information on
this conjecture and its proof in some interesting special cases.}.
In such a setting the next natural step is to try to measure the
extent to which the geometry of $(G,d_G)$ is non-Hilbertian. While
one can come up with several useful ways to quantify
non-embeddabililty, the present paper is a contribution to the
theory of compression exponents: a popular and elegant way of
measuring non-bi-Lipschitz embeddability of infinite groups that was
introduced by Guentner and Kaminker in~\cite{GK04}.

The Hilbert compression exponent of $G$, denoted $\alpha^*(G)$, is
defined as the supremum of those $\alpha\ge 0$ for which there exists a
Lipschitz function $f:G\to L_2$ satisfying $\|f(x)-f(y)\|_2\ge
cd_G(x,y)^{\alpha}$ for every $x,y\in G$ and some constant $c>0$
which is independent of $x,y$. More generally, given a target metric
space $(X,d_X)$ the compression exponent of $G$ in $X$, denoted
$\alpha_X^*(G)$, is the supremum over $\alpha\ge 0$ for which there
exists a Lipschitz function $f:G\to X$ satisfying $d_X(f(x),f(y))\ge
cd_G(x,y)^{\alpha}$. When $X=L_p$ for some $p\ge 1$ we shall use the
notation $\alpha_p^*(G)=\alpha_{L_p}^*(G)$ (thus
$\alpha_2^*(G)=\alpha^*(G)$).

When $(X,\|\cdot\|_X)$ is a Banach space one can analogously define
the equivariant compression exponent of $G$ in $X$, denoted
$\alpha_X^\#(G)$, as the supremum over $\alpha\ge 0$ for which there
exists a $G$-equivariant\footnote{A mapping $\psi:G\to X$ is called
$G$-equivariant if there exists an action $\tau$ of $G$ on $X$ by
affine isometries and a vector $v\in X$ such that $\psi(x)=\tau(x)v$
for all $x\in G$. Equivalently there exists an action $\pi$ on $X$ by
linear isometries such that $\psi$ is a $1$-cocycle with respect to
$\pi$ (we denote this by $\psi\in Z^1(G,\pi)$), i.e., for every $x,y\in
G$ we have $\psi(xy)=\pi(x)\psi(y)+\psi(x)$. A key useful point here is that
in this case $\|\psi(x)-\psi(y)\|_X$ is an invariant semi-metric on $G$.}
mapping $\psi:G\to X$ satisfying $\|\psi(x)-\psi(y)\|_X\ge
cd_G(x,y)^{\alpha}$. We write as above
$\alpha_p^\#(G)=\alpha_{L_p}^\#(G)$ and
$\alpha^\#(G)=\alpha_2^\#(G)$. Recall that $G$ is said to have the
Haagerup property if there exists an equivariant function $\psi:G\to
L_2$ such that $\inf\{\|\psi(x)-\psi(y)\|_2:\ d_G(x,y)\ge t\}$ tends to
infinity with $t$. We refer to the book~\cite{CCJJV01} for more
information on the Haagerup property and its applications. Thus the
notion of equivariant compression exponent can be viewed as a
quantitative refinement of the Haagerup property, and  this is
indeed the way that bounds on the equivariant compression exponent
are usually used.

The parameters $\alpha_X^*(G)$ and $\alpha_X^\#(G)$ do not depend on
the choice of symmetric generating set $S$, and are therefore
genuine algebraic invariants of the group $G$. In~\cite{GK04} it was
shown that if $\alpha^\#(G)>\frac12$ then $G$ is amenable. This
result was generalized in~\cite{NP07}, where it was shown that for
$p\ge 1$ if $X$ is a Banach space whose modulus of uniform
smoothness has power type $p$ (i.e. for every two unit vectors
$x,y\in X$ and $\tau>0$ we have $\|x+\tau y\|_X+\|x-\tau y\|_X\le
2+c\tau^p$ for some $c>0$ which does not depend on $x,y,\tau$) and
$\alpha_X^\#(G)>\frac{1}{p}$ then $G$ is amenable. It was also shown
in~\cite{GK04} that if $\alpha^*(G)>\frac12$ then the reduced $C^*$
algebra of $G$ is exact.


Despite their intrinsic interest and a considerable amount of effort
by researchers in recent years, the invariants $\alpha_X^*(G),
\alpha_X^\#(G)$ have been computed in only a few cases. It was shown
in~\cite{ADS06} that for any $\alpha\in [0,1]$ there exists a
finitely generated group $G$ with $\alpha^*(G)=\alpha$. In light of
this fact it is quite remarkable that, apart from a few exceptions,
in most of the known cases in which compression exponents have been
computed they turned out to be equal to $1$ or $0$. A classical
theorem of Assouad~\cite{Ass83} implies that groups of polynomial
growth have Hilbert compression exponent $1$. On the other hand,
Gromov's random groups~\cite{Gro03} have Hilbert compression
exponent $0$. Bourgain's classical metrical characterization of
superreflexivity~\cite{Bourgain86} implies that finitely generated
free groups have Hilbert compression exponent 1 (this interpretation
of Bourgain's theorem was first noted in~\cite{GK04}), and more
generally it was shown in~\cite{BS08} that hyperbolic groups have
Hilbert compression $1$ and in~\cite{CN05} that so does any discrete
group acting properly and co-compactly on a finite dimensional
CAT(0) cubical complex. In~\cite{Tess06} it was shown that
co-compact lattices in connected Lie groups, irreducible lattices in
semi-simple Lie groups of rank at least $2$, polycyclic groups and
certain semidirect products with $\Z$ (including wreath
products\footnote{The (restricted) wreath product of $G$ with $H$,
denoted $G\bwr H$, is defined as as the group of all pairs $(f,x)$
where $f:H\to G$ has finite support (i.e. $f(z)= e_G$, the identity
element of $G$, for all but finitely many $z\in H$) and $x\in H$,
equipped with the product $ (f,x)(g,y)\coloneqq \left(z\mapsto
f(z)g(x^{-1}z),xy\right)$. If $G$ is generated by the set
$S\subseteq G$ and $H$ is generated by the set $T\subseteq H$ then
$G\bwr H$ is generated by the set $\{(e_{G^H},t):\ t\in T\}\cup
\{(\delta_s,e_H):\ s\in S\}$, where $\delta_s$ is the function which
takes the value $s$ at $e_H$ and the value $e_G$ on
$H\setminus\{e_H\}$. Unless otherwise stated we will always assume
that $G\bwr H$ is equipped with the word metric associated with this
canonical set of generators (although in most cases our assertions
will be independent of the choice of generators).} of finite groups
with $\Z$ and the Baumslag-Solitar group) all have Hilbert
compression exponent $1$. The first example of a group with Hilbert
compression exponent in $(0,1)$ was found in~\cite{AGS06}, where it
was proved that R. Thompson's group $F$ satisfies
$\alpha^*(F)=\frac12$. Another well-studied case is the wreath
product $\Z\bwr \Z$: in~\cite{Gal08} it was shown that
$\alpha^*(\Z\bwr \Z)\ge \frac13$, and this lower bound was improved
in~\cite{AGS06} and independently in~\cite{SV07} to $\alpha^*(\Z\bwr
\Z)\ge \frac12$. Moreover it was shown in~\cite{AGS06} that
$\alpha^*(\Z\bwr \Z)\le \frac34$ and a combination of the results
of~\cite{ANP07} and~\cite{NP07}, which established sharp upper and
lower bounds on $\alpha^*(\Z\bwr \Z)$, respectively, settles the
case of the Hilbert compression exponent of $\Z\bwr \Z$ by showing
that $\alpha^*(\Z\bwr \Z)=\frac23$ (nevertheless, the $\frac34$
upper bound on $\alpha^*(\Z\bwr \Z)$ from~\cite{AGS06} has a special
meaning which is important for our current work---we will return to
this topic later in this introduction). More generally, it was shown
in~\cite{NP07} that if we define recursively $\Z_1=\Z$ and
$\Z_{(k+1)}=\Z_{(k)}\bwr \Z$ then
$\alpha^*(\Z_{(k)})=\frac{1}{2-2^{1-k}}$. In~\cite{NP07} it was
shown that $\alpha^*\left(C_2\bwr \Z^2\right)=\frac12$, where $C_2$
denotes the cyclic group of order $2$ (the lower bound of $\frac12$
was  proved earlier in~\cite{Tess06}). Finally, it follows
from~\cite{CSV07,NP07}
 that $\alpha^*(C_2\bwr F_n)=\alpha^\#(C_2\bwr
F_n)=\frac12$, where $F_n$ is the free group on $n\ge 2$ generators
(the upper bound of $\frac12$ on $\alpha^*(C_2\bwr F_n)$ is due
to~\cite{NP07} while the lower bound on $\alpha^\#(C_2\bwr F_n)$ is
the key result of~\cite{CSV07}). Many of the above results have (at
least partial) variants for the $L_p$ compression of the groups in
question----we stated here only the case of Hilbert compression  for
the sake of simplicity, and we refer to the relevant papers for more
information.

 The
difficulty in evaluating compression exponents is the main reason
for our interest in this topic, and our purpose here is to devise
new methods to compute them. In doing so we answer questions posed
in~\cite{Tess06,NP07}. One feature of the known methods for
computing compression exponents is that they involve a novel
interplay between group theory and other mathematical disciplines
such as metric geometry, Banach space theory, analysis and
probability. It isn't only the case that the latter disciplines are
applied to group theory---it turns out that the investigation of
compression exponents improved our understanding of issues in
analysis and metric geometry as well (e.g. in~\cite{NP07}
compression exponents were used to make progress on the theory of
non-linear type). In the present paper we apply our new compression
exponent calculations to the Lipschitz extension problem, and relate
them to the Jones Traveling Salesman problem. These applications
will be described in detail presently.

In~\cite{Tess06} it was shown that for all $d\in \N$ we have
$\alpha^*\left(C_2\bwr \Z^d\right)\ge \frac{1}{d}$. A different
embedding yielding this lower bound was obtained in~\cite{NP07},
together with the matching upper bound when $d=2$. Thus, as stated
above, $\alpha^*\left(C_2\bwr \Z^2\right)=\frac12$. In
Section~\ref{section:polywreath} we investigate the value of
$\alpha_p^*(G\bwr H)$ when $G$ is a general group and $H$ is a group
of polynomial growth. The key feature of our result is that we
obtain a lower bound on $\alpha_p^*(G\bwr H)$ which is independent
of the growth rate of $H$. In combination with the upper bounds
obtained in~\cite{NP07} our lower bound implies that for every $p\in
[1,\infty)$ and every group $H$ of polynomial growth whose growth is
at least quadratic we have:
\begin{eqnarray}\label{eq:intro polywreath}\alpha_p^*(\Z\bwr H)=\alpha_p^*(C_2\bwr
H)=\max\left\{\frac{1}{p},\frac12\right\}.\end{eqnarray} As we
explain in Remark~\ref{rem:old Z^2} below, the embedding
from~\cite{NP07} which yielded the identity $\alpha_2^*\left(C_2\bwr
\Z^2\right)=\frac12$ was based on the trivial fact, which is special
to $2$ dimensions, that for every $A\subseteq \Z^2$ of diameter $D$,
the shortest path in $\Z^2$ which covers $A$ has length at most
$O\left(D^2\right)$. It therefore turns out that the previous method
for bounding $\alpha_p^*\left(C_2\bwr \Z^d\right)$ yields tight
bounds only when $p=d=2$ (this is made precise in
Remark~\ref{rem:old Z^2}). Hence in order to prove~\eqref{eq:intro
polywreath} we devise a new embedding which is in the spirit of (but
simpler than) the multi-scale arguments used in the proof of the
Jones Traveling Salesman Theorem~\cite{Jones90} (see
also~\cite{Oki92} and the survey~\cite{Schul07}).

To explain the connection between our proof and the Jones Traveling
Salesman Theorem take two elements $(f,x),(g,y)$ in the ``planar
lamplighter group" $C_2\bwr \Z^2$, i.e., $x,y\in \Z^2$ and
$f,g:\Z^d\to \{0,1\}$ with finite support. The distance between
$(f,x)$ and $(g,y)$ in $C_2\bwr \Z^2$ is, up to a factor of $2$, the shortest path in the
integer grid $\Z^2$ which starts at $x$, visits all the sites $w\in
\Z^2$ at which $f(w)$ and $g(w)$ differ, and terminates at $y$.
Jones~\cite{Jones90} associates to every set $A\subseteq \R^2$ of
diameter $1$ a sequence of numbers, known as the (squares of the)
Jones $\beta$ numbers, whose appropriately weighted sum is (up to
universal factors) the length of the shortest Lipschitz curve
covering $A$, assuming such a curve exists. Focusing on our proof of
the fact that $\alpha_1^*\left(C_2\bwr \Z^2\right)=1$, in our
setting we do something similar: we associate to every $(f,x)\in
C_2\bwr \Z^2$ a sequence of real numbers such that if we wish to
estimate (up to logarithmic terms) the shortest traveling salesman
tour starting at $x$, ending at $y$, and covering the symmetric
difference of the supports of $f$ and $g$, all we have to do is to
compute the $\ell_1$ norm of the difference of the sequences
associated to $(f,x)$ and $(g,y)$. Since the statement
$\alpha_1^*\left(C_2\bwr \Z^2\right)=1$ does not necessarily imply
that $C_2\bwr \Z^2$ admits a bi-Lipschitz embedding into $L_1$, our
result falls short of obtaining a constant-factor approximation of
the length of this tour, which, if possible, would be an interesting
equivariant version of the Jones Traveling Salesman Theorem (note
that if one wishes to estimate the length of the shortest Lipschitz
curve covering the symmetric difference $A\triangle B$ for some
$A,B\subseteq \R^2$ one cannot ``read" this just from the Jones
$\beta$ numbers of $A$ and $B$ without recomputing the Jones $\beta$
numbers of $A\triangle B$). In view of such a potential
strengthening of the Jones Traveling Salesman Theorem, the question
whether $C_2\bwr \Z^2$ admits a bi-Lipschitz embedding into $L_1$
remains an interesting open problem that arises from our work (which
currently only yields a ``compression $1$" version of this
statement).

In Section~\ref{sec:Lp comp} we compute the $L_p$ compression of
$\Z\bwr \Z$, answering a question posed in~\cite{NP07}. Namely we
show that for $p\in [1,\infty)$ we have:
\begin{eqnarray}\label{eq:intro Z wreath Z}\alpha_p^*(\Z\bwr \Z)=
\max\left\{\frac{p}{2p-1},\frac23\right\}. \end{eqnarray} The fact
that $\alpha_p^*(\Z\bwr \Z)$ is at least the right-hand side
of~\eqref{eq:intro Z wreath Z} was proved in~\cite{NP07}, so the key
issue in~\eqref{eq:intro Z wreath Z} is to show that no embedding of
$\Z\bwr\Z$ can have a compression exponent bigger than the
right-hand side of~\eqref{eq:intro Z wreath Z}. We do so via a
non-trivial enhancement of the {\em Markov type} method for bounding
compression exponents that was introduced in~\cite{ANP07}. In order
to explain the new idea used in proving~\eqref{eq:intro Z wreath Z}
we first briefly recall the basic bound from~\cite{ANP07}.

A Markov chain $\{Z_t\}_{t=0}^\infty$ with transition probabilities
$a_{ij}\coloneqq\Pr(Z_{t+1}=j\mid Z_t=i)$ on the state space
$\{1,\ldots,n\}$ is {\em stationary\/} if $\pi_i\coloneqq\Pr(Z_t=i)$
does not depend on $t$ and it is {\em reversible\/} if
$\pi_i\,a_{ij}=\pi_j\,a_{ji}$ for every $i,j\in\{1,\ldots,n\}$.
Given a metric space $(X,d_X)$ and $p\in [1,\infty)$, we say that
$X$ has {\em Markov type} $p$ if there exists a constant $K>0$ such
that for every stationary reversible Markov chain
$\{Z_t\}_{t=0}^\infty$ on $\{1,\ldots,n\}$, every mapping
$f:\{1,\ldots,n\}\to X$ and every time $t\in \mathbb N$,
\begin{eqnarray}\label{eq:defMarkov}
\E \big[ d_X(f(Z_t),f(Z_0))^p\big]\le K^p\,t\,\E\big[
d_X(f(Z_1),f(Z_0))^p\big].
\end{eqnarray}
The least such $K$ is called the Markov type $p$ constant of $X$,
and is denoted $M_p(X)$. This important concept was introduced by
Ball in~\cite{Bal} and has since found a variety of applications in
metric geometry, including applications to the theory of compression
exponents~\cite{ANP07,NP07}. We refer to~\cite{NPSS06} for examples
of spaces which have Markov type $p$. For our purposes it suffices
to mention that Banach spaces whose modulus of uniform smoothness
has power type $p$ have Markov type $p$~\cite{NPSS06}, and therefore
the Markov type of $L_p$, $p\in [1,\infty)$, is $\min\{p,2\}$.

In~\cite{NP07} a parameter $\beta^*(G)$ is defined to be the
supremum over all $\beta\ge 0$ for which there exists a symmetric
set of generators $S$ of $G$ and $c>0$ such that for all $t\in \N$,
\begin{eqnarray}\label{eq:assumption-ANP} \E\big[d_G(W_t,e)\big]\ge ct^\beta,
\end{eqnarray}
where $\{W_t\}_{t=0}^\infty$ is the canonical simple random walk on
the Cayley graph of $G$ determined by $S$, starting at the identity
element $e_G$. The proof in~\cite{ANP07} shows that if $(X,d_X)$ has
Markov type $p$ and $G$ is amenable then:
\begin{eqnarray}\label{eq:ANP}
\alpha^*_X(G)\le \frac{1}{p\beta^*(G)}.
\end{eqnarray}

In order to prove~\eqref{eq:intro Z wreath Z} we establish in
Section~\ref{sec:amenable beta} a crucial strengthening
of~\eqref{eq:ANP}. Given a symmetric probability measure $\mu$ on
$G$ let $\{g_k\}_{k=1}^\infty$ be i.i.d. elements of $G$ which are
distributed according to $\mu$. The $\mu$-random walk
$\{W_t^\mu\}_{t=0}^\infty$ is defined as $W_0^\mu=e_G$ and
$W_t^\mu=g_1g_2\cdots g_t$ for $t\in \N$. Let $\rho$ be a
left-invariant metric on $G$ such that $B_\rho(e_G,r)=\{x\in G:\
\rho(x,e)\le r\}$ is finite for all $r\ge 0$. Define
$\beta_p^*(G,\rho)$ to be the supremum over all $\beta\ge 0$ such
that there exists an increasing sequence of integers
$\{t_k\}_{k=1}^\infty$ and a sequence of symmetric probability
measures $\{\mu_k\}_{k=1}^\infty$ on $G$ satisfying
\begin{equation}\label{eq:intro integrability}
\forall k\in \N\ \ \int_G \rho(x,e_G)^pd\mu_k(x)<\infty\quad
\mathrm{and}\quad \lim_{k\to\infty} \big( t_k
\mu_k\left(G\setminus\{e_G\}\right)\big)=\infty.
\end{equation}
such that for all $k\in \N$,
$$
\E_{\mu_{k}}\left[\rho\left(W_{t_k}^{\mu_{k}},e_G\right)\right]\ge
t_k^{\beta }\left(
\E_{\mu_{k}}\left[\rho\left(W_1^{\mu_{k}},e_G\right)^p\right]\right)^{\beta}.
$$

In Section~\ref{sec:amenable beta} we show that if $G$ is amenable,
$\rho$ is a left-invariant metric on $G$ with respect to which all
balls are finite, and $(X,d_X)$ has Markov type $p$, then:
\begin{eqnarray}\label{eq:intro new ANP}
\alpha^*_X(G,\rho)\le \frac{1}{p\beta_p^*(G,\rho)},
\end{eqnarray}
where $\alpha^*_X(G,\rho)$ is the supremum over all $\alpha\ge 0$
for which there exists a $\rho$-Lipschitz map $f:G\to X$ which
satisfies $d_X(f(x),f(y))\ge c \rho(x,y)^\alpha$ (we previously
defined this parameter only when $\rho=d_G$). We refer to the
discussion in Section~\ref{sec:amenable beta} for more information
on the parameter $\beta_p^*(G,\rho)$. At this point it suffices to
note that $\beta_p^*(G,d_G)\ge \beta^*(G)$, and
therefore~\eqref{eq:intro new ANP} is stronger than~\eqref{eq:ANP},
since we now consider a variant of~\eqref{eq:assumption-ANP} where
the walk can be induced by an arbitrary symmetric probability
measure, and the measure itself is allowed to depend on the time
$t$. It turns out that~\eqref{eq:intro new ANP} is a crucial {\em
strict} improvement over~\eqref{eq:ANP}, and we require the full
force of this strengthening: we shall use non-standard random walks
(i.e., not only the canonical walk on the Cayley graph of $G$), as
well as an adaptation of the walk to the time $t$
in~\eqref{eq:assumption-ANP}, in addition to invariant metrics
$\rho$ other than the word metric $d_G$.

We establish~\eqref{eq:intro Z wreath Z} by showing that for every
$p\in [1,2)$ we have $\beta_p^*(\Z\bwr
\Z,d_{\Z\bwr\Z})=\frac{2p-1}{p^2}>\frac{3}{4}=\beta^*(\Z\bwr\Z)$ (it
follows in particular that~\eqref{eq:intro new ANP} is indeed
strictly stronger than~\eqref{eq:ANP}. Note that $\Z\bwr\Z$ is
amenable and $L_p$ has Markov type $p$, so we are allowed to
use~\eqref{eq:intro new ANP}). This is achieved by considering a
random walk induced on $\Z\bwr \Z$ from a random walk on $\Z$ whose
increments are discrete versions of $q$-stable random variables for
every $q>p$. We refer to Section~\ref{sec:Lp comp} for the details.
We believe that there is a key novel feature of our proof which
highlights the power of random walk techniques in embedding
problems: we adapt the random walk on $G$ to the target space $L_p$.
Previously~\cite{LMN02,BLMN05,NPSS06,ANP07,NP07} Markov type was
used in embedding problems by considering a Markov chain on the
space we wish to embed which arises intrinsically, and ``ignored"
the intended target space: such chains are typically taken to be the
canonical random walk on some graph, but a different example appears
in~\cite{BLMN05}, where embeddings of arbitrary subsets $A$ of the
Hamming cube $(\{0,1\}^n,\|\cdot\|_1)$ are investigated via a
construction of a special random walk on $A$ which captures the
``largeness" of $A$. Nevertheless, in all known cases the geometric
object which was being embedded dictated the study of some natural
random walk, while in our computation of $\alpha_p^*(\Z\bwr \Z)$ the
target space $L_p$ influences the choice of the random walk.

Recall that we mentioned above that prior to~\cite{ANP07} the best
known upper bound~\cite{AGS06} on $\alpha^*(\Z\bwr \Z)$ was
$\frac34$. An inspection of the proof of this bound in~\cite{AGS06}
reveals that it considered only points in the normal subgroup of
$\Z\bwr \Z$ consisting of all configurations where the lamplighter
is at $0$, i.e., the {\em zero section} of $\Z\bwr\Z$:
$$
(\Z\bwr\Z)_0\coloneqq\{(f,x)\in \Z\bwr\Z:\ x=0\}\lhd \Z\bwr \Z.
$$
Thus~\cite{AGS06} actually establishes the bound
$\alpha^*\left((\Z\bwr\Z)_0,d_{\Z\bwr \Z}\right)\le \frac34$. More
generally, an obvious variant of the proof of this fact
in~\cite{AGS06} (see Lemma 7.8 in~\cite{NP07}) shows that for $p\in
[1,2]$ we have $\alpha_p^*\left((\Z\bwr\Z)_0,d_{\Z\bwr \Z}\right)\le
\frac{p+1}{2p}$. Here we show that
\begin{equation}\label{eq:intro zer section}
\alpha_p^*\left((\Z\bwr\Z)_0,d_{\Z\bwr
\Z}\right)=\max\left\{\frac{p+1}{2p},\frac34\right\}.
\end{equation}
An alternative proof of the fact that the right-hand side
of~\eqref{eq:intro zer section} is greater than
$\alpha_p^*\left((\Z\bwr\Z)_0,d_{\Z\bwr \Z}\right)$, which belongs
to the framework of~\eqref{eq:intro new ANP}, is given in
Section~\ref{sec:zero section walk}, where we show that for every
$p\in [1,2]$ we have $\beta_p^*\left((\Z\bwr\Z)_0,d_{\Z\bwr
\Z}\right)=\frac{2}{p+1}$. The heart of~\eqref{eq:intro zer section}
is the construction of an embedding into $L_p$ of the zero section
$(\Z\bwr\Z)_0$ which achieves the claimed compression exponent. This
turns out to be quite delicate: a Fourier analytic argument
establishing this fact is presented in Section~\ref{sec:zero
section}.

It is worthwhile to note at this point that in all of our new
compression computations, namely~\eqref{eq:intro polywreath},
\eqref{eq:intro Z wreath Z} and~\eqref{eq:intro integrability}, we
claim that for some group $G$ equipped with an invariant metric
$\rho$ and for every $p\in [2,\infty)$ we have
$\alpha_p^*(G,\rho)=\alpha_2^*(G,\rho)$. This is  true since because
$L_2$ is isometric to a subset of $L_p$ we obviously have
$\alpha_p^*(G,\rho)\ge\alpha_2^*(G,\rho)$. In the reverse direction,
all of our upper bounds on $L_p$ compression exponents are based
on~\eqref{eq:intro new ANP}, and since both $L_2$ and $L_p$ have
Markov type $2$~\cite{NPSS06} the resulting upper bound for $L_p$
coincides with the upper bound for $L_2$. For this reason it will
suffice to prove all of our results when $p\in [1,2]$.

In Section~\ref{sec:extension} we apply the fact that
$\alpha^*\left((\Z\bwr\Z)_0,d_{\Z\bwr \Z}\right)\neq \alpha^*(\Z\bwr
\Z)$ to the Lipschitz extension problem. This classical problem asks
for geometric conditions on a pair of metric spaces $(X,d_X)$ and
$(Y,d_Y)$ which ensure that for any subset $A\subseteq X$ any
Lipschitz mapping $f:A\to Y$ can be extended to all of $X$. Among
the motivating themes for  research on the Lipschitz extension
problem is the belief that many classical extension theorems for
linear operators between Banach spaces have Lipschitz analogs. Two
examples of this phenomenon are the non-linear Hahn-Banach theorem
(see for example~\cite{WW75,BL00}), which corresponds to extension
of real valued functions while preserving their Lipschitz constant,
and the non-linear version of Maurey's extension
theorem~\cite{Bal,NPSS06}. It turns out that our investigation of
the Hilbert compression exponent of the zero section of $\Z\bwr \Z$
implies the existence of a Lipschitz function $f:(\Z\bwr\Z)_0\to
L_2$ which cannot be extended to a Lipschitz function defined on all
of $\Z\bwr \Z$. For those who believe in the above analogy between
the Lipschitz extension problem and the extension problem for linear
operators this fact might seem somewhat surprising: after all
$H=(\Z\bwr\Z)_0$ is a normal subgroup of $G=\Z\bwr \Z$ with
$G/H\cong \Z$, so it resembles a non-commutative version of a
subspace of co-dimension $1$ in a Banach space, for which the
Lipschitz extension problem is trivial (again by the Hahn-Banach
theorem). Nevertheless, the analogy with Banach spaces stops
here, as our result shows that the normal subgroup $H$ sits in $G$
in an ``entangled" way which makes it impossible to extend certain
Lipschitz functions while preserving the Lipschitz property.

To explain the connection with the Lipschitz extension problem take
$\psi:(\Z\bwr \Z)_0\to L_2$ which is $1$-Lipschitz and
$\|\psi(x)-\psi(y)\|_2\ge cd_{\Z\bwr \Z}(x,y)^{3/4}$ for all $x,y\in
(\Z\bwr \Z)_0$, where $c>0$ is a universal constant\footnote{It
isn't quite accurate that the fact that
$\alpha^*\left((\Z\bwr\Z)_0,d_{\Z\bwr \Z}\right)=\frac34$ implies
the existence of such a function $\psi$, since all we are assured is
a compression exponent lower bound of $\frac34-\e$ for all $\e>0$.
This is immaterial for the sake of the argument here in the
introduction---a precise proof is given in
Section~\ref{sec:extension}}. We claim that $\psi$ cannot be
extended  to a Lipschitz function $\Psi$ defined on all of
$\Z\bwr\Z$, so assume for the sake of contradiction that $\Psi$
extends $\psi$ and is Lipschitz. To arrive at a contradiction we
need to contrast the $\frac34$ lower bound on the compression
exponent of $\psi$ with the Markov type $2$ proof of the fact that
$\Psi$ cannot have compression larger than $\frac23$
from~\cite{ANP07}. Let $\{W_t\}_{t=0}^\infty$ be the canonical
random walk on $\Z\bwr\Z$ starting at the identity element. Writing
$W_t=(f_t,x_t)\in \Z\bwr \Z$ one can see that with high probability
$|x_t|\lesssim \sqrt{t}$, while the distance between $W_t$ and the
identity element is $\gtrsim t^{3/4}$. The fact that $L_2$ has
Markov type $2$ and $\Psi$ is Lipschitz says that we expect
$\|\Psi(W_t)-\Psi(W_0)\|_2$ to be $\lesssim \sqrt{t}$. But, if we
move $W_t$ to its closest point in the zero section $(\Z\bwr \Z)_0$
then the image under $\Psi$ will (using the Lipschitz condition)
move $\lesssim \sqrt{t}$. Using the compression inequality for
$\psi$ we deduce that for large enough $t$ we have $\sqrt{t}\gtrsim
\|\Psi(W_t)-\Psi(W_0)\|_2\gtrsim
\left(t^{3/4}\right)^{3/4}=t^{9/16}$, which is a contradiction. This
argument is, of course, flawed, since we are allowed to use the fact
that $L_2$ has Markov type $2$ only for Markov chains which are
stationary and reversible, and this is not the case for the
canonical random walk starting at the identity element.
Nevertheless, this proof can be salvaged using the same intuition:
in Section~\ref{sec:extension} we consider a certain finite subset
of $\Z\bwr \Z$ which lies within a narrow tubular neighborhood of
$(\Z\bwr \Z)_0$. We then apply the same ideas to the random walk
obtained by choosing a point in this subset uniformly at random and
preforming a random walk on the subset with appropriate boundary
conditions. We refer to Section~\ref{sec:extension} for the full
details. It is perhaps somewhat amusing to note here that while the
notion of Markov type was introduced by Ball~\cite{Bal} in order to
prove an extension theorem (Ball's extension theorem), here we use
Markov type for the opposite purpose---to prove a non-extendability
result.

Thus far we did not discuss the relation between the parameters
$\alpha_X^*(G)$ and $\alpha_X^\#(G)$ for some Banach space $X$. This
is, in fact, a subtle issue: it is unclear when
$\alpha_X^*(G)=\alpha_X^\#(G)$. Since for every $p\in [1,\infty)$
the free group $F_n$ on $n\ge 2$ generators satisfies
$\alpha_p^*(F_n)=1$ yet
$\alpha_p^\#(F_n)=\max\left\{\frac{1}{p},\frac12\right\}$
(see~\cite{GK04,NP07}) it follows that the compression exponent and
equivariant compression exponent can be different from each other, while in many
cases we know that these two invariants coincide: for example
$\alpha_p^*(C_2\bwr F_n)=\alpha_p^\#(C_2\bwr
F_n)=\max\left\{\frac{1}{p},\frac12\right\}$
(see~\cite{CSV07,NP07}). A useful result of Aharoni, Maurey and
Mityagin~\cite{AhaMauMit} for Abelian groups, and Gromov
(see~\cite{dCTV07}) for general amenable groups, says that for any
amenable group $G$ we have $\alpha_2^*(G)=\alpha_2^\#(G)$. This is
an obviously useful fact (examples of applications can be found
in~\cite{dCTV07,ANV07}): for example in~\cite{NP07} it was shown
that if $X$ is a Banach space whose modulus of uniform smoothness
has power type $p$ then for every finitely generated group $G$ we
have:
\begin{equation}\label{intro NP}
\alpha_X^\#(G)\le \frac{1}{p\beta^*(G)}.
\end{equation}
The bound~\eqref{intro NP} implies the bound~\eqref{eq:ANP} when $G$
is amenable and $X$ is Hilbert space due to the above reduction to
equivariant mappings for amenable groups and Hilbertian targets. At
the time of writing of~\cite{NP07} it was unclear
whether~\eqref{intro NP} implies~\eqref{eq:ANP} in general, since an
Aharoni-Maurey-Mityagin/Gromov type result was not known in
non-Hilbertian settings. In Section~\ref{sec:amenable beta} we
further improve~\eqref{intro NP} by showing that if $X$ is a Banach
space whose modulus of uniform smoothness has power type $p$ then:
\begin{equation}\label{intro betap equivariant}
\alpha_X^\#(G)\le \frac{1}{p\beta_p^*(G)}.
\end{equation}
In Section~\ref{sec:equivariant} we show that for every $p\in
[1,\infty)$ if $G$ is an amenable group and $X$ is a Banach space
then there exists a Banach space $Y$ which is finitely
representable\footnote{A Banach space $U$  is said to be finitely
representable in a Banach space $V$ if for every $\e>0$ and every
finite dimensional subspace $F\subseteq U$ there is a linear
operator $T:F\to V$ such that for every $x\in F$ we have
$\|x\|_U\le\|Tx\|_V\le (1+\e)\|x\|_U$.} in $\ell_p(X)$ and
\begin{eqnarray}\label{eq:equiv L_p}
\alpha_Y^\#(G)\ge \alpha_X^*(G).
\end{eqnarray}
Moreover, if $X=L_p$ then we can also take $Y=L_p$
in~\eqref{eq:equiv L_p}, and thus $\alpha_p^*(G)=\alpha_p^\#(G)$
when $G$ is amenable. Note also that if $X$ has modulus of uniform
smoothness of power type $p$ then so does $\ell_p(X)$, and hence so
does $Y$. Therefore by virtue of~\eqref{eq:equiv L_p} the
inequalities~\eqref{intro NP} and~\eqref{intro betap equivariant}
are indeed stronger than the inequalities~\eqref{eq:ANP}
and~\eqref{eq:intro new ANP} in full generality.

We end this introduction by commenting on why so much of the
literature (and also the present paper) focused  on compression
exponents of wreath products. The obvious answer is that groups such
as $\Z\bwr \Z$ are among the simplest examples of groups for which
it was unknown for a long time how to compute their compression
exponents. As it turns out, understanding such groups required new
ideas and new connections between geometric group theory and other
mathematical disciplines. But, there is also a deeper reason for our
interest in embeddings of wreath products. Ka\v{z}dan's
example~\cite{Kaz67} (see also~\cite{dlHV89}) of $\Z^2\rtimes
SL_2(\Z)$ shows that there can be two groups, each of which has
positive equivariant compression exponent, yet their semidirect
product fails to have a positive equivariant compression exponent,
and even fails the Haagerup property. It seems challenging to
characterize which semidirect products preserve the property of having
positive compression exponents, and wreath products, as examples of
semidirect products, are a good place to start trying to understand this
fundamental question. The literature on compression exponents of
wreath products shows that in many cases this operation preserves
the property of having positive compression exponent, but we do not
know if this is always true, even for amenable groups: the simplest
such example is the groups $C_2\bwr(C_2\bwr \Z)$ for which we do not
know if it has positive Hilbert compression exponent, even though
both $C_2$ and $C_2\bwr \Z$ have Hilbert compression exponent $1$.

\section{Preliminaries}

In what follows we fix two groups $G$ and $H$, which are generated
by the symmetric finite sets $S_G$ and $S_H$, respectively. The
corresponding left invariant word metrics will be denoted $d_G$ and
$d_H$, respectively. The canonical generating set of the wreath
product $G\bwr H$ is
\begin{equation*}\label{eq:cannonical generating wreath}
\left\{\left(\mathbf{e_G},x\right):\ x\in
S_H\right\}\cup\left\{\left(\delta_y,e_H\right):\ y\in S_G\right\},
\end{equation*}
where $\mathbf{e_G}:H\to G$ denotes the constant  $e_G$ function and
for $y\in G$ the function $\delta_y:H\to G$ takes the value $y$ at
$e_H$ and the value $e_H$ elsewhere.

Given a function $f:H\to G$ we denote its support by
$\supp(f)\coloneqq \{x\in H:\ f(x)\neq e_G\}$. For a finite subset
$A\subseteq H$ and $x,y\in H$ we let $\TSP(A;x,y)$ denote the length
of the shortest path in $H$ which starts at $x$, covers $A$, and
terminates at $y$, i.e.,
\begin{eqnarray*}\label{eq:defTSPxy}
\TSP(A;x,y)\coloneqq \inf\left\{\sum_{j=0}^{k-1}d_H(x_j,x_{j+1}):\
k\in \N,\ \ x=x_0,\ldots,x_k=y\in H\ \wedge\  A\subseteq
\{x_0,\ldots,x_k\}\right\}.
\end{eqnarray*}
Thus
$$
|A|+\TSP(A,x,y)=d_{C_2\bwr
H}\left(\left(\1_{y^{-1}A},y^{-1}x\right),(\0,0)\right),
$$
where $\0:H\to C_2$ denotes the constant $0$ function.
Following~\cite{NP07} we let $\mathscr{L}_G(H)$ denote the wreath
product of $G$ with $H$ where the set of generators of $G$ is taken
to be $G\setminus\{e_G\}$ (i.e. any two distinct elements of $G$ are
at distance $1$ from each other). In other words, the difference
between $\mathscr{L}_G(H)$ and the classical lamplighter group
$C_2\bwr H$ is that we allow the ``lamps" to have $G$ types of
different ``lights", where the cost of switching from one type of
light to another is $1$. Thus, with this definition it is immediate
that for every $(f,x),(g,y)\in \mathscr{L}_G(\Z)$ we have
\begin{eqnarray}\label{eq:the metric}
d_{\mathscr{L}_G(\Z)}\big((f,x),(g,y)\big)= d_{C_2\bwr
H}\left((\1_{y^{-1}\supp\left(fg^{-1}\right)},y^{-1}x),(\0,0)\right)=\left|\supp\left(fg^{-1}\right)\right|+\TSP\left(\supp\left(fg^{-1}\right);x,y\right).
\end{eqnarray}
Moreover, distances in the wreath product $G\bwr H$, equipped with
the canonical generating set, can be computed as follows:
\begin{eqnarray}\label{eq;distances in wreath}
d_{G\bwr
H}\big((f,x),(g,y)\big)=\TSP\left(\supp\left(fg^{-1}\right);x,y\right)+\sum_{x\in
H} d_G(f(x),g(x)).
\end{eqnarray}

The following lemma generalizes Lemma 3.1 in~\cite{NP07}, which
deals with the special case $H=\Z$ (in which case the proof is
 easier).

\begin{lem}\label{lem:two lamps} Assume that $G$ contains at least two elements. Then for any $p\ge 1$ we have
$$ \alpha_p^*\big(\mathscr{L}_G(H)\big)=\alpha_p^*\left(C_2\bwr
H\right).$$
\end{lem}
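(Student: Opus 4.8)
The plan is to show both inequalities separately. For the easy direction, $\alpha_p^*\big(\mathscr{L}_G(H)\big)\le\alpha_p^*\left(C_2\bwr H\right)$, I would exhibit a Lipschitz embedding of $C_2\bwr H$ into $\mathscr{L}_G(H)$ with compression exponent $1$: since $G$ has at least two elements, fix $g_0\in G\setminus\{e_G\}$ and map $(\mathbf{1}_{A},x)\in C_2\bwr H$ to $(g_0\cdot\mathbf{1}_A,x)\in\mathscr{L}_G(H)$, i.e. replace each lit lamp (value $1\in C_2$) by the lamp of type $g_0$. By the distance formulas~\eqref{eq:the metric} and~\eqref{eq:the metric} (applied with $H$ in place of $\Z$, which is the content of the generalization asserted just before the lemma), the symmetric difference of supports is preserved and the $\TSP$ term is preserved, so this map is actually an isometric embedding onto a subgroup; composing with a near-optimal $L_p$ embedding of $\mathscr{L}_G(H)$ gives the inequality. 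Symmetrically, one should also note the trivial embedding $C_2\bwr H\hookrightarrow\mathscr{L}_G(H)$ shows the reverse inequality is the substantive one.

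For the harder direction, $\alpha_p^*\left(C_2\bwr H\right)\le\alpha_p^*\big(\mathscr{L}_G(H)\big)$, the idea is that a single $\mathscr{L}_G(H)$-lamp with $|G|$ possible states can be encoded by a bounded number of $C_2$-lamps placed at nearby sites, at the cost of only a bounded multiplicative distortion of the metric. Concretely, fix a site $o\in H\setminus\{e_H\}$ within bounded distance of $e_H$ (available since $H$ is infinite, or if $H$ is finite the statement is essentially vacuous — but in the relevant applications $H$ has polynomial growth hence is infinite) and, more generally, use finitely many ``offset'' sites; assign to each $g\in G$ a subset $c(g)$ of these offsets (a code), with $c(e_G)=\emptyset$, chosen so that $|c(g)\triangle c(g')|\asymp 1$ for distinct $g,g'$. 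Then define $\Phi:\mathscr{L}_G(H)\to C_2\bwr H$ by sending configuration $f:H\to G$ to the $C_2$-configuration supported on $\{x\cdot s: x\in\supp(f),\ s\in c(f(x))\}$, keeping the lamplighter position. The key estimate is that $d_{C_2\bwr H}(\Phi(f,x),\Phi(g,y))\asymp d_{\mathscr{L}_G(H)}((f,x),(g,y))$: the $\TSP$ of the (bounded-radius thickening of the) symmetric difference of supports is comparable to the original $\TSP$, and the ``number of switches'' term $\sum_x d_G(f(x),g(x))$, which in $\mathscr{L}_G(H)$ counts sites where $f\ne g$, is matched up to constants by the size of the symmetric difference of the codes. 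This bi-Lipschitz embedding, composed with a near-optimal $L_p$ embedding of $C_2\bwr H$, yields the desired inequality.

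The main obstacle is the control of the $\TSP$ term after thickening the supports by a bounded radius in $H$. One needs that for any finite $A\subseteq H$ and $x,y\in H$, and any bounded $R$, $\TSP(A^{(R)};x,y)\asymp_R \TSP(A;x,y)+|A|$ where $A^{(R)}$ is the $R$-neighborhood — the subtlety being that a tour covering $A$ need not pass within $R$ of $A^{(R)}$, though it does pass through $A$ itself; conversely a tour covering $A^{(R)}$ must visit each point of $A$'s neighborhood, and one can cheaply detour from any tour of $A$ to sweep each of the boundedly-many offsets near each visited point, incurring cost $O(R|A|)$. This is exactly the kind of combinatorial $\TSP$ manipulation that is ``easier when $H=\Z$'' (as the paper remarks), because in $\Z$ a $\TSP$ tour is essentially an interval; for general $H$ one must argue more carefully, but since the radius $R$ and the number of offsets are fixed constants depending only on $|G|$, the losses are bounded and the comparison goes through. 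Everything else — the code construction, the support bookkeeping, the passage to $L_p$ via composition — is routine once this $\TSP$ comparison is in hand.
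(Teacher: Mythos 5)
Your easy direction is fine (it is the paper's one-line observation that $\mathscr{L}_G(H)$ contains an isometric copy of $C_2\bwr H$), but the hard direction has a genuine gap: your deterministic ``spatial coding'' of lamp states only makes sense when $G$ is finite. You need the codes $c(g)$, $g\in G$, to be distinct subsets of a \emph{fixed} finite set of offsets (your constants explicitly ``depend only on $|G|$''), and an infinite $G$ admits no such code: either the offset set is finite, in which case only finitely many codes exist, or the offsets spread out unboundedly, in which case the bounded-radius thickening and the $\TSP$ comparison collapse. The lemma is stated for arbitrary $G$ with at least two elements, and the case of infinite $G$ is exactly the one the paper needs (Corollary~\ref{coro:Lp} and Theorem~\ref{thm:polywreath} apply it with $G=\Z$, e.g.\ for $\alpha_p^*(\Z\bwr H)$), so the proposal does not prove the stated result. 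There is also a secondary problem even for finite $G$: code points attached to different sites of $H$ can land on the same site (nearby lamps have overlapping offset neighborhoods), and then either the $C_2$-values cancel mod $2$ or, if you define the image by a union of supports, the map is not compatible with the group operation; in both cases the symmetric difference of the image configurations need not contain a point within bounded distance of \emph{every} site where $f\neq g$, and your lower $\TSP$ comparison (detouring from a tour of the image to each point of $\supp(fg^{-1})$) is not justified. Missing even one far-away differing site is fatal for the compression lower bound.

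The paper avoids all of this by randomizing over lamp states rather than coding them spatially: fix i.i.d.\ Bernoulli bits $\{\e_z\}_{z\in G\setminus\{e_G\}}$, replace a lamp in state $f(z)$ by the single $C_2$-lamp $\e_{f(z)}$ at the \emph{same} site, and set $F(f,x)=\theta(\e_f,x)\in L_p(\Omega,L_p)$, where $\theta$ is a near-optimal embedding of $C_2\bwr H$. The Lipschitz bound is immediate from $A_\e\coloneqq\supp(\e_f-\e_g)\subseteq A\coloneqq\supp(fg^{-1})$, and the compression bound follows from $\TSP(A;x,y)\le 2\TSP(A_\e;x,y)+\TSP(A\setminus A_\e;x,y)$ together with the fact that $A_\e$ and $A\setminus A_\e$ are identically distributed, after taking expectations. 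This works uniformly in $G$ (finite or infinite), requires no geometric control of $H$ beyond the trivial tour-concatenation inequality, and needs no injectivity or separation of code placements. If you want to salvage a deterministic construction, you would at least have to restrict to finite $G$ and resolve the overlap issue (for instance by passing through $C_2^k\bwr H$, which then needs its own comparison to $C_2\bwr H$), so the randomized projection is both simpler and strictly more general.
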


\begin{proof} Obviously $ \alpha_p^*\left(\mathscr{L}_G(H)\right)\le \alpha_p^*\left(C_2\bwr
H\right)$, since $\mathscr{L}_G(H)$ contains an isometric copy of
$C_2\bwr H$. To prove the reverse direction we may assume that
$\alpha_p^*\left(C_2\bwr H\right)>0$. Fix
$0<\alpha<\alpha_p^*\left(C_2\bwr H\right)$ and a mapping $\theta:
C_2\bwr \Z\to L_p$ satisfying
\begin{eqnarray}\label{eq:satisfying}
(f,x),(g,y)\in C_2\bwr H\implies d_{C_2\bwr
H}\big((f,x),(g,y)\big)^{\alpha}\lesssim\|\theta(f,x)-\theta(g,y)\|_p\lesssim
d_{C_2\bwr H}\big((f,x),(g,y)\big).
\end{eqnarray}
Let $\{\e_z\}_{z\in G\setminus\{e_G\}}$ be i.i.d. $\{0,1\}$-valued
Bernoulli random variables, defined on some probability space
$(\Omega,\Pr)$. For every $f:H\to G$ define a random mapping $\e_f:
H\to C_2$ by $$ \e_f(z)\coloneqq \left\{\begin{array}{ll}\e_{f(z)} &
\mathrm{if}\
f(z)\neq e_G,\\
0 & \mathrm{if}\ f(z)=e_G.\end{array}\right.$$ We now define an
embedding $F:\mathscr{L}_G(H)\to L_p(\Omega,L_p)$ by
$$
F(f,x)\coloneqq \theta(\e_f,x).
$$
Given $(f,x),(g,y)\in G\bwr H$ denote $A\coloneqq
\supp\left(fg^{-1}\right)=\{z\in H:\ f(z)\neq g(z)\}$. We also
denote by $A_\e\subseteq H$ the random subset $\supp(\e_f-\e_g)$. By
definition $A_\e\subseteq A$, so that $\TSP(A_\e;x,y)\le
\TSP(A;x,y)$. Hence:
\begin{multline*}
\|F(f,x)-F(g,y)\|_{L_p(\Omega,L_p)}^p=\E
\left[\left\|\theta(\e_f,x)-\theta(\e_g,y)\right\|_p^p\right]\stackrel{\eqref{eq:satisfying}}{\lesssim}
\E \left[d_{C_2\bwr
H}\big((\e_f,x),(\e_g,y)\big)^p\right]\\=\E\left[\TSP(A_\e;x,y)^p\right]\le
\E\left[\TSP(A;x,y)^p\right]\stackrel{\eqref{eq:the
metric}}{=}d_{\mathscr{L}_G(\Z)}\big((f,x),(g,y)\big)^p.
\end{multline*}
In the reverse direction, observe that
\begin{eqnarray}\label{eq:tsp union}
\TSP(A;x,y)\le 2\TSP(A_\e;x,y)+\TSP(A\setminus A_\e;x,y),
\end{eqnarray}
 since
given a path $\gamma$ that starts at $x$, ends at $y$, and covers
$A_\e$, and a path $\delta$ that starts at $x$, ends at $y$, and
covers $A\setminus A_\e$, we can consider the path that starts as
$\gamma$, retraces $\gamma$'s steps from $y$ back to $x$, and then
continues as $\delta$ from $x$ to $y$.  Hence,
\begin{eqnarray}\label{eq:before expectation} d_{\mathscr{L}_G(\Z)}\big((f,x),(g,y)\big)^{p\alpha}\stackrel{\eqref{eq:the
metric}}{=}\TSP(A;x,y)^{p\alpha} \stackrel{\eqref{eq:tsp
union}}{\lesssim} \TSP(A_\e;x,y)^{p\alpha }+\TSP(A\setminus
A_\e;x,y)^{p\alpha}.
\end{eqnarray}
But by the symmetry of our construction the random subsets $A_\e$
and $A\setminus A_\e$ are identically distributed. So, taking
expectation in~\eqref{eq:before expectation} we see that
\begin{multline*}
d_{\mathscr{L}_G(\Z)}\big((f,x),(g,y)\big)^{p\alpha}\lesssim
\E\left[\TSP(A_\e;x,y)^{p\alpha }\right]=\E \left[d_{C_2\bwr
H}\big((\e_f,x),(\e_g,y)\big)^{p\alpha}\right]\\\stackrel{\eqref{eq:satisfying}}{\lesssim}
\E\left[\|\theta(\e_f,x)-\theta(\e_g,y)\|_p^p\right]=\|F(f,x)-F(g,y)\|_{L_p(\Omega,L_p)}^p.
\end{multline*}
Thus $G\bwr H$ embeds into $L_p(\Omega,L_p)$ with compression
$\alpha$, as required.
\end{proof}

A combination of Lemma~\ref{lem:two lamps} and Theorem 3.3
in~\cite{NP07} yields the following corollary:

\begin{cor}\label{coro:Lp}
Let $G, H$ be nontrivial groups and $p\ge 1$. Then
$$
\min\left\{\alpha_p^*(G),\alpha_p^*(C_2\bwr H)\right\}\ge
\frac{1}{p}\implies \alpha_p^*(G\bwr H)\ge
\frac{p\alpha_p^*(G)\alpha_p^*(C_2\bwr
H)}{p\alpha_p^*(G)+p\alpha_p^*(C_2\bwr H)-1},
$$
and
\begin{eqnarray*}\label{eq:Lp second}
\min\left\{\alpha_p^*(G),\alpha_p^*(C_2\bwr H)\right\}\le
\frac{1}{p}\implies \alpha_p^*(G\bwr H)\ge
\min\left\{\alpha_p^*(G),\alpha_p^*(C_2\bwr H)\right\}.
\end{eqnarray*}
\end{cor}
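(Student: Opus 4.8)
The plan is to first use Lemma~\ref{lem:two lamps} to rewrite $\alpha_p^*(C_2\bwr H)$ as $\alpha_p^*(\L_G(H))$, and then to glue a near-optimal embedding of $G$ (applied site by site) to a near-optimal embedding of $\L_G(H)$ (applied to the lamp configuration). Fix $\alpha<\alpha_p^*(G)$ and $\beta<\alpha_p^*(\L_G(H))=\alpha_p^*(C_2\bwr H)$ together with $1$-Lipschitz maps $\f\colon G\to L_p$, $\psi\colon \L_G(H)\to L_p$ satisfying $\|\f(a)-\f(b)\|_p\gtrsim d_G(a,b)^\alpha$ and $\|\psi(u)-\psi(v)\|_p\gtrsim d_{\L_G(H)}(u,v)^\beta$ (the latter exists by Lemma~\ref{lem:two lamps}). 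Define $F\colon G\bwr H\to L_p\oplus\ell_p(H,L_p)$ (an $\ell_p$-direct sum) by $F(f,x)\coloneqq\big(\psi(f,x),\,(\f(f(z))-\f(e_G))_{z\in H}\big)$, the second coordinate being finitely supported since $f$ is. For a pair $(f,x),(g,y)$ write $T\coloneqq\TSP(\supp(fg^{-1});x,y)$, $\Sigma\coloneqq\sum_{z\in H}d_G(f(z),g(z))$ and $N\coloneqq|\supp(fg^{-1})|$; by~\eqref{eq;distances in wreath} and~\eqref{eq:the metric} we have $d_{G\bwr H}((f,x),(g,y))=T+\Sigma$ and $d_{\L_G(H)}((f,x),(g,y))=N+T$, and trivially $N\le\Sigma\le d_{G\bwr H}$.

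The map $F$ is Lipschitz: the first coordinate moves at most $d_{\L_G(H)}\le d_{G\bwr H}$, and the second at most $\big(\sum_z d_G(f(z),g(z))^p\big)^{1/p}\le\Sigma\le d_{G\bwr H}$. For the compression bound I would split on $T\ge\Sigma$ versus $\Sigma\ge T$. If $T\ge\Sigma$ then $d_{\L_G(H)}\ge T\ge d_{G\bwr H}/2$, and the first coordinate already yields compression $\beta$. If $\Sigma\ge T$, then $\Sigma\asymp d_{G\bwr H}$, and I would bound the second coordinate from below using the power-mean inequality $\sum_z d_G(f(z),g(z))^{p\alpha}\ge N^{1-p\alpha}\Sigma^{p\alpha}$ when $p\alpha\ge1$, giving a contribution $\gtrsim \Sigma^{\alpha}N^{1/p-\alpha}$, while the first coordinate contributes $\gtrsim N^{\beta}$ (since $d_{\L_G(H)}\ge N$). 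Hence $\|F(f,x)-F(g,y)\|_p\gtrsim\max\{N^{\beta},\,d_{G\bwr H}^{\,\alpha}N^{1/p-\alpha}\}\ge\min_{s>0}\max\{s^{\beta},\,d_{G\bwr H}^{\,\alpha}s^{1/p-\alpha}\}=d_{G\bwr H}^{\,p\alpha\beta/(p\alpha+p\beta-1)}$, the minimum being attained at the balance point $s=d_{G\bwr H}^{\,\alpha/(\alpha+\beta-1/p)}$ and the exponent being meaningful precisely because $p\alpha\ge1$ forces $p\alpha+p\beta-1>0$. Letting $\alpha\uparrow\alpha_p^*(G)$ and $\beta\uparrow\alpha_p^*(C_2\bwr H)$ proves the first assertion. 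For the second assertion, when $\min\{\alpha_p^*(G),\alpha_p^*(C_2\bwr H)\}\le\frac1p$ one again runs the $T\ge\Sigma$ / $\Sigma\ge T$ dichotomy: in the $\Sigma\ge T$ case, if $p\alpha\le1$ then the subadditivity bound $\sum_z d_G(f(z),g(z))^{p\alpha}\ge\Sigma^{p\alpha}$ gives compression $\alpha$ outright, while if $p\alpha>1$ then necessarily $\beta\le\frac1p$ and the estimate $\Sigma^{\alpha}N^{1/p-\alpha}\ge\Sigma^{1/p}\asymp d_{G\bwr H}^{1/p}\ge d_{G\bwr H}^{\,\beta}$ does the job; combined with the $T\ge\Sigma$ case this yields compression $\ge\min\{\alpha,\beta\}$, and again we let $\alpha,\beta$ tend to the respective exponents.

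I expect the main obstacle to be the $\Sigma\ge T$ case: the number $N$ of lit lamps is not pinned down by $d_{G\bwr H}$, and it enters the two coordinatewise lower bounds with opposite-sign exponents, so one must check that the worst admissible $N$ still produces the harmonic-mean-type exponent $\tfrac{p\alpha\beta}{p\alpha+p\beta-1}$ in the first regime and never falls below $\min\{\alpha_p^*(G),\alpha_p^*(C_2\bwr H)\}$ in the second, which requires keeping the power-mean and subadditivity inequalities straight and handling the boundary cases $p\alpha=1$, $p\beta=1$, $p\alpha+p\beta=1$. The rest --- the Lipschitz estimate, the $T\ge\Sigma$ case, and the limits in $\alpha,\beta$ --- is routine. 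Alternatively, one may simply quote Theorem~3.3 of~\cite{NP07}, whose hypothesis is phrased in terms of $\alpha_p^*(\L_G(H))$, and substitute the identity from Lemma~\ref{lem:two lamps}.
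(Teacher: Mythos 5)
Your proposal is correct. Note, though, that the paper's own proof of Corollary~\ref{coro:Lp} is a one-line citation: it invokes Theorem~3.3 of~\cite{NP07} (which bounds $\alpha_p^*(G\bwr H)$ in terms of $\alpha_p^*(G)$ and $\alpha_p^*\big(\mathscr{L}_G(H)\big)$) and substitutes Lemma~\ref{lem:two lamps} --- exactly the alternative you offer in your last sentence. The body of your proposal instead re-derives the substance of that cited theorem in a self-contained way, and the derivation checks out: $F=\psi\oplus\big(\varphi\circ f(\cdot)\big)$ is Lipschitz because $d_{\mathscr{L}_G(H)}\le d_{G\bwr H}$ (as $N\le\Sigma$) and $\big(\sum_z d_G(f(z),g(z))^p\big)^{1/p}\le\Sigma$; in the regime $\Sigma\ge T$ the convexity bound $\sum_z d_G(f(z),g(z))^{p\alpha}\ge N^{1-p\alpha}\Sigma^{p\alpha}$ (valid for $p\alpha\ge1$), together with $d_{\mathscr{L}_G(H)}\ge N$, yields $\max\{N^{\beta},d^{\alpha}N^{1/p-\alpha}\}$, whose minimum over $N>0$ is indeed $d^{p\alpha\beta/(p\alpha+p\beta-1)}$, and the boundary situations you flag ($p\alpha\le 1$, $\beta\le 1/p$, and the limits $\alpha\uparrow\alpha_p^*(G)$, $\beta\uparrow\alpha_p^*(C_2\bwr H)$) are genuinely only routine limiting issues, handled as you indicate; the second assertion's trichotomy is likewise sound. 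What the paper's citation buys is brevity; what your direct argument buys is transparency: it isolates the single balancing parameter (the number $N$ of altered lamps) and makes the exponent arithmetic visible, at the cost of essentially reproving the black-boxed result from~\cite{NP07}. One small point worth making explicit when you write it up: since the word metric is at least $1$ on distinct points, combining the case-wise lower bounds into a single compression bound with the minimum of the exponents is legitimate, and this is implicitly used when merging the $T\ge\Sigma$ and $\Sigma\ge T$ regimes.
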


We end this section with a simple multi-scale estimate for the
length of traveling salesmen tours (see for example~\cite{Steele97}
for a similar estimate). For $r\ge 0$ and $x\in H$ we let
$B_H(x,r)\coloneqq \left\{y\in H:\ d_H(x,y)\le r\right\}$ be the
closed ball centered at $x$ with radius $r$. For a bounded set
$A\subseteq H$ and $r>0$ we let $N(A,r)$ be the smallest integer
$n\in \N$ such that there exists $x_1,\ldots,x_n\in H$ for which
$A\subseteq \bigcup_{m=1}^n B_H(x_m,r)$. Finally, for $\ell\ge 0$
let $\TSP_\ell(A)$ denote the length of the shortest path starting
from $e_H$, coming within a distance of at most $2^{\ell-1}$ from
every point in $A$, and returning to $e_H$, i.e.
$$
\TSP_\ell(A)\coloneqq \inf\left\{\sum_{j=0}^{k-1}d_H(x_j,x_{j+1}):\
k\in \N,\ \  e_H=x_0,\ldots,x_k=e_H\in H,\ A\subseteq
\bigcup_{j=0}^k B_H\left(x_j,2^{\ell-1}\right)\right\}.
$$
Thus $\TSP(A)\coloneqq \TSP(A;e_H,e_H)=\TSP_0(A)=d_{C_2\bwr
H}\big((\1_A,e_H),(\mathbf{0}, e_H)\big)$ is the length of the
shortest path starting from $e_H$, covering $A$, and returning to
$e_H$. We shall use the following easy bound, which holds for every
$k,\ell\in \N\cup\{0\}$:
\begin{eqnarray}\label{eq:easybound}
A\subseteq B_H\left(e_H,2^k\right)\implies \TSP_\ell(A)\le
3\sum_{j=\ell}^k 2^{j} N\left(A,2^{j-1}\right).
\end{eqnarray}
The inequality~\eqref{eq:easybound} is valid when $\ell\ge k+1$
since in that case $\TSP_\ell(A)=0$. Now~\eqref{eq:easybound}
follows by induction from the inequality $\TSP_{\ell-1}(A)\le
\TSP_\ell(A)+3\cdot2^{\ell-1} N\left(A,2^{\ell-2}\right)$. This
inequality holds true
 since we can take a
set $C\subseteq H$ of size $N\left(A,2^{\ell-2}\right)$ such that
$\bigcup_{x\in C} B_H\left(x,2^{\ell-2}\right)\supseteq A$, and also
take a path $\Gamma\subseteq H$ of length $\TSP_\ell(A)$ which
starts from $e_H$, comes within a distance of at most $2^{\ell-1}$
from every point in $A$, and returns to $e_H$. If we append to
$\Gamma$ a shortest path from each $x\in C$ to its closest neighbor
in $\Gamma$ (and back) we obtain a new path of length at most
$\TSP_\ell(A)+2\left(2^{\ell-1}+2^{\ell-2}\right)|C|\le
\TSP_\ell(A)+3\cdot2^{\ell-1}|C|$ which starts from $e_H$, comes
within a distance of at most $2^{\ell-2}$ from every point in $A$,
and returns to $e_H$, as required.

\section{Wreath products of groups with polynomial
growth}\label{section:polywreath}

The goal of this section is to prove the following theorem:

\begin{thm}\label{thm:polywreath} Let $G,H$ be nontrivial finitely generated
groups, and assume that $H$ has polynomial growth. Then for every
$p\in [1,2]$ we have
\begin{eqnarray}\label{eq:embedding}
\alpha_p^*(G\bwr H)\ge \min\left\{\frac{1}{p},\alpha_p^*(G)\right\}.
\end{eqnarray}
In particular, if the growth rate of $H$ is at least quadratic then
for every $p\in [1,2]$ we have
\begin{eqnarray}\label{eq:1/p}\alpha_p^*(\Z\bwr H)=\alpha_p^*(C_2\bwr
H)=\frac{1}{p}.\end{eqnarray}
\end{thm}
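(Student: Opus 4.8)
The plan is to prove the compression lower bound \eqref{eq:embedding} by constructing an explicit embedding of $G\bwr H$ into $L_p$ built from a multi-scale decomposition of traveling salesman tours in $H$, and then to combine it with the known upper bound from~\cite{NP07} to deduce~\eqref{eq:1/p}. By Corollary~\ref{coro:Lp} it in fact suffices to handle the ``lamplighter'' factor, i.e.\ to prove that $\alpha_p^*\big(C_2\bwr H\big)\ge \frac1p$ (and then $\alpha_p^*(G\bwr H)\ge \min\{\alpha_p^*(G),\frac1p\}$ follows from the second implication of Corollary~\ref{coro:Lp} if $\alpha_p^*(G)\le\frac1p$, and from the first implication, which gives a value $\ge\frac1p$, otherwise; either way one gets $\min\{\alpha_p^*(G),\frac1p\}$). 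So the heart of the matter is an embedding $C_2\bwr H\to L_p$ with compression $\frac1p-\e$ for every $\e>0$, using only that $H$ has polynomial growth — crucially with no dependence of the compression exponent on the degree of polynomial growth.

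First I would recall from \eqref{eq:the metric} that the distance from $(\1_A,x)$ to $(\1_B,y)$ in $C_2\bwr H$ is, up to a factor of $2$, $|A\triangle B| + \TSP(A\triangle B;x,y)$, so the task reduces to producing, for each finite $A\subseteq H$ and each $x\in H$, a vector in $L_p$ whose pairwise $L_p$-distances are squeezed between $\big(|A\triangle B|+\TSP(A\triangle B;x,y)\big)^{1/p}$ (up to constants and the $\e$ loss) and $|A\triangle B|+\TSP(A\triangle B;x,y)$. The $|A\triangle B|$ part is handled by the standard coordinate $A\mapsto \1_A\in \ell_p(H)$. For the TSP part I would use the dyadic estimate \eqref{eq:easybound}: at each scale $2^j$ fix a maximal $2^{j-1}$-separated net $\mathcal N_j$ of $H$ (finite in every ball, by polynomial growth), and for a configuration $(\1_A,x)$ record, in a block of coordinates indexed by $\mathcal N_j$, an indicator-type vector that flags which net points at scale $j$ are ``used'' by $A$ (i.e.\ lie within $2^{j-1}$ of a point of $A$), together with a base-point contribution recording $x$ at the coarsest relevant scale. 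Summing the blocks over $j$ with weights proportional to $2^{j/p}$, the $L_p$-norm of the difference between the vectors of $(\1_A,x)$ and $(\1_B,y)$ becomes, up to constants, $\big(\sum_j 2^{j} \cdot \#\{\text{net points at scale }j\text{ in }A\triangle B\text{ but not }B\setminus A\text{ etc.}\}\big)^{1/p}$, which by \eqref{eq:easybound} is $\lesssim \TSP(A\triangle B;x,y)^{1/p}$ from above in the appropriate direction; the logarithmic number of active scales (bounded by $\log(\text{diameter})$, hence absorbed into the $\e$) is what forces compression $\frac1p-\e$ rather than exactly $\frac1p$. The Lipschitz (upper) bound goes the other way: moving one lamp, or moving the cursor one step, changes each scale-$j$ block by $O(N(\text{one ball},2^{j-1})) = O(1)$ net points whose total weighted $p$-th power sums to $O(1)$ across scales, so the map is $O(1)$-Lipschitz after restriction to a ball and then globalized by the usual telescoping over dyadic annuli (exactly the mechanism behind \eqref{eq:easybound}).

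The main obstacle I anticipate is making the lower (compression) bound honest: the net-flag vectors only see which net points are \emph{used}, and one must argue that a genuinely long optimal TSP tour for $A\triangle B$ is \emph{detected} at enough scales — i.e.\ that $\TSP(A\triangle B;x,y)$ is controlled from above (not just below) by the weighted count $\sum_j 2^j N(A\triangle B, 2^{j-1})$, which is precisely \eqref{eq:easybound}, but one also needs the reverse inequality $\TSP(A;x,y)\gtrsim \sum_j 2^j \big(N(A,2^{j-1}) - N(A,2^j)\big)$ or a similar ``telescoping lower bound'', so that a short embedded distance forces a short tour. This is the point where the Jones-$\beta$-number philosophy enters: the count of new net points appearing between consecutive scales is a combinatorial surrogate for the $\beta$-numbers, and a packing argument (again using polynomial growth of $H$ only through finiteness of nets, not their cardinality) shows the tour cannot be much shorter than the weighted net count. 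Once both directions are in place, $\alpha_p^*(C_2\bwr H)\ge \frac1p$ follows, \eqref{eq:embedding} follows from Corollary~\ref{coro:Lp}, and for $H$ of at-least-quadratic growth the matching upper bound $\alpha_p^*(\Z\bwr H)=\alpha_p^*(C_2\bwr H)\le \frac1p$ from~\cite{NP07} (via Markov type / the $\beta_p^*$ machinery, noting $\beta^*(H)\ge \frac12$ forces $\beta^*(C_2\bwr H)\ge \frac12$ here) yields the equality \eqref{eq:1/p}; the $\Z\bwr H$ case is identified with the $C_2\bwr H$ case via Lemma~\ref{lem:two lamps}.
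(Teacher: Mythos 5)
There is a genuine gap, and it is in the Lipschitz (upper) bound for your embedding. Your scale-$j$ blocks flag which net points are ``used'' by the lamp configuration $A$ alone, and the cursor enters only through a separate additive base-point term. But flipping a single lamp at a site $w$ at which the cursor currently sits is a single generator move (group distance $1$), while it changes $O(1)$ flags at \emph{every} scale $j$ for which no other point of $A$ lies within $2^{j-1}$ of the relevant net points --- in particular at all scales when $A=\emptyset$ --- and each such flag carries weight $2^{j/p}$. The $p$-th power of the displacement of the image is therefore of order $\sum_j 2^j$, i.e.\ of order the diameter of the configuration (or divergent without a truncation), not $O(1)$; your assertion that the per-generator change ``sums to $O(1)$ across scales'' fails exactly here, and restricting to a ball and telescoping over dyadic annuli cannot repair it, since the offending move has group length $1$. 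More structurally, no embedding of the product form $\Phi(A)\oplus\theta(x)$ can work: taking the cursor at $w$ forces $\|\Phi(A)-\Phi(A\triangle\{w\})\|_p\lesssim 1$ for all $A,w$, while taking both cursors at $e_H$ with $d_H(e_H,w)=R$ forces $\|\Phi(\emptyset)-\Phi(\{w\})\|_p\gtrsim R^{1/p-\e}$, a contradiction for large $R$. This is precisely why the paper's coordinates at scale $2^k$ and location $y$ record the full restriction $f\upharpoonright_{B_H(y,2^k)}$ and are modulated by the cutoff $\f\left(d_H(x,y)/2^k\right)$ depending on the cursor position: this makes $\Psi$ a cocycle for an isometric action, so Lipschitzness reduces to boundedness on the generators, where the lamp flip at the cursor contributes $0$ and a cursor step costs $\sum_k 2^{-(d-1)k-kp}\cdot O\left(2^{kd}\right)<\infty$ (this uses $p>1$; the case $p=1$ is then obtained by embedding $L_p$ isometrically into $L_1$, a point your scheme also leaves untreated).

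Two smaller points. First, the ``reverse inequality'' you worry about for the lower bound is not needed: for the compression bound one only needs $\TSP(A;e_H,e_H)\lesssim \sum_j 2^j N\left(A,2^{j-1}\right)$, which is exactly \eqref{eq:easybound}, while the embedding's norm is bounded below by the weighted net counts via a packing argument (this is where polynomial growth enters quantitatively, through the weight $2^{-(d-1)k/p}$ and the $\approx 2^{kd}$ witnesses $y$ per covered net point); no lower bound on $\TSP$ in terms of net counts is required. Second, for \eqref{eq:1/p} the matching upper bound comes from $\beta^*(G\bwr H)=1$ when $H$ has at least quadratic growth (Theorem 6.1 in~\cite{NP07}) combined with the Markov type $p$ of $L_p$; the fact you invoke, $\beta^*(H)\ge\frac12$ forcing $\beta^*(C_2\bwr H)\ge \frac12$, would only yield the useless bound $\frac{2}{p}$.
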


\begin{proof}
We shall first explain how to deduce the identity~\eqref{eq:1/p}.
The lower bound $\alpha_p^*(\Z\bwr H)=\alpha_p^*(C_2\bwr H)\ge
\frac{1}{p}$ is a consequence of~\eqref{eq:embedding}. Since for
$p\in [1,2]$ the Banach space $L_p$ has Markov type $p$
(see~\cite{Bal}), the result of Austin, Naor and Peres~\cite{ANP07}
implies that $\alpha_p^*(G\bwr H)\le \frac{1}{p\beta^*(G\bwr H)}$.
But, as we proved in~\cite{NP07}, since the growth of $H$ is at
least quadratic we have $\beta^*(G\bwr H)=1$.

To prove~\eqref{eq:embedding} note that by Corollary~\ref{coro:Lp}
it is enough to show that
\begin{eqnarray}\label{eq:goal lamplighter}
\alpha_p^*(C_2\bwr H)\ge \frac{1}{p}. \end{eqnarray} Recall that for
$r\ge 0$ and $x\in H$ we let $B_H(x,r)\coloneqq \left\{y\in H:\
d_H(x,y)\le r\right\}$ be the closed ball centered at $x$ with
radius $r$. Assume that $H$ has polynomial growth $d$, i.e., that
for every $r\ge 1$ we have
\begin{eqnarray}\label{eq:polygrowth}
ar^d\le\left|B_H(e,r)\right|\le b r^d \end{eqnarray} for some
$a,b>0$ which do not depend on $r$. We shall show that for every
$1<p\le 2$ and $\e\in (0,1/p)$ there is a function $F: C_2\bwr H\to
L_p$ such that for all $(f,x),(g,y)\in C_2\bwr H$ we have
\begin{eqnarray}\label{eq:exponent attained}
d_{C_2\bwr H}\big((f,x),(g,y)\big)^{\frac{1}{p}-\e}\lesssim
\|F(f,x)-F(g,y)\|_p\lesssim d_{C_2\bwr H}\big((f,x),(g,y)\big),
\end{eqnarray}
where here, and in the remainder of the proof of
Theorem~\ref{thm:polywreath}, the implied constants depend only on
$a,b,p,d,\e$. Moreover, we will show that we can take $\e=0$
in~\eqref{eq:exponent attained} if $(H,d_H)$ admits a bi-Lipschitz
embedding into $L_p$. Note that~\eqref{eq:exponent attained} implies
also the case $p=1$ of Theorem~\ref{thm:polywreath} since $L_p$ is
isometric to a subspace of $L_1$ for all $p\in (1,2]$ (see
e.g.~\cite{WW75}).

Let $\Omega$ be the disjoint union of the sets of functions $f:A\to
C_2$ where $A$ ranges over all finite subsets of $H$, i.e.
$$
\Omega\coloneqq \bigcup_{\substack{A\subseteq H\\ |A|<\infty}}C_2^A.
$$
We will work with the Banach space $\ell_\infty(\Omega)$, and denote
its standard coordinate basis by $$ \Big\{v_f:\ f:A\to C_2,\
A\subseteq H,\ |A|<\infty\Big\}.$$

Fix   a $1$-Lipschitz function $\f:[0,\infty)\to [0,1]$ which equals
$0$ on $[0,1]$ and equals $1$ on $[2,\infty)$. For every $(f,x)\in
C_2\bwr H$ define a function $\Psi_0(f,x)\in \ell_\infty(\Omega)$ by
\begin{eqnarray}\label{eq:defF}
\Psi_0(f,x)\coloneqq \sum_{k=0}^\infty 2^{-(d-1)k/p} \sum_{y\in
H}\f\left(\frac{d_H(x,y)}{2^k}\right)v_{f\upharpoonright_{
B_H(y,2^k)}}.
\end{eqnarray}
We shall first check that $\Psi_0-\Psi_0(\0,e_H)\in Z^1(H,\pi)$ for
an appropriately chosen action $\pi$ of $C_2\bwr H$ on
$\ell_p(\Omega)$. Recall that the product on $C_2\bwr H$ is given by
$(f,x)(g,y)=(f+T_x(g),xy)$, where $T_x(g)(z)\coloneqq
g\left(x^{-1}z\right)$. Given $(f,x)\in C_2\bwr H$ and a finite
subset $A\subseteq H$ define a bijection $\tau_{(f,x)}^A:C_2^A\to
C_2^{xA}$ by $ \tau_{(f,x)}^A(h)\coloneqq f+T_x(h)$. Note  that for
all $(f,x),(g,y)\in C_2\bwr H$ and every finite $A\subseteq H$ we
have
\begin{eqnarray}\label{eq:invariance1}
\tau_{(f,x)(g,y)}^A= \tau^{yA}_{(f,x)}\circ \tau^A_{(g,y)}.
\end{eqnarray}
Hence if we define
$$
\pi(f,x)\left(\sum_{\substack{A\subseteq H\\|A|<\infty}}\sum_{h\in
C_2^A}\alpha_hv_h\right)\coloneqq \sum_{\substack{A\subseteq
H\\|A|<\infty}}\sum_{h\in C_2^A}\alpha_hv_{\tau_{(f,x)}^A(h)},
$$
then $\pi$ is a linear isometric action of $C_2\bwr H$ on
$\ell_p(\Omega)$ for all $p\in [1,\infty]$ ($\pi(f,x)$ corresponds
to a permutation of the coordinates and hence is an isometry. The
fact that $\pi\big((f,x)(g,y)\big)=\pi(f,x)\pi(f,y)$ is an immediate
consequence of~\eqref{eq:invariance1}). The
definition~\eqref{eq:defF} ensures that for every $(f,x),(g,y)\in
C_2\bwr H$ we have $\Psi_0\big((f,x)(g,y)\big)=\pi(f,x)\Psi_0(g,y)$.
Hence, if we define $\Psi(f,x)\coloneqq \Psi_0(f,x)-\Psi_0(\0,e_H)$
then $\Psi\in Z^1(H,\pi)$.

Note that $\Psi(\0,e_H)=0$ and
$$
\Psi(\1_{\{e_H\}},e_H)=\sum_{k=0}^\infty 2^{-(d-1)k/p}\sum_{y\in
B_H(e_H,2^k)}\f\left(\frac{d_H(e_H,y)}{2^k}\right)\left(v_{\delta_{e_H}\upharpoonright_{
B_H(y,2^k)}}-v_{\0\upharpoonright_{B_H(y,2^k)}}\right)=0,
$$
where we used the fact that $\f(t)=0$ for $t\in [0,1]$. Moreover,
for every $s\in S_H$ we have
\begin{eqnarray*}
\left\|\Psi(\0,s)\right\|_p^p&=&\sum_{k=0}^\infty 2^{-(d-1)k}
\sum_{y\in
H}\left|\f\left(\frac{d_H(s,y)}{2^k}\right)-\f\left(\frac{d_H(e_H,y)}{2^k}\right)\right|^p\\&=&\sum_{k=0}^\infty
2^{-(d-1)k} \sum_{\substack{y\in H\\2^k-1\le d_H(e_H,y)\le
2^{k+1}+1}}\left|\f\left(\frac{d_H(s,y)}{2^k}\right)-\f\left(\frac{d_H(e_H,y)}{2^k}\right)\right|^p\\
&\le& \sum_{k=0}^\infty 2^{-(d-1)k} \cdot 2^{-kp} \left|\left\{y\in
H:\
2^k-1\le d_H(e_H,y)\le 2^{k+1}+1\right\}\right|\\
&\le& \sum_{k=0}^\infty 2^{-(d-1)k} \cdot 2^{-kp}\cdot
b\left(2^{k+1}+1\right)^d\\
&\le& 4^db\sum_{k=0}^\infty 2^{-k(p-1)}\lesssim 1,
\end{eqnarray*}
Where we used the fact that $p>1$. Since $\Psi$ is equivariant and
the set $\{(\1_{\{e_H\}},e_H)\}\cup\{(\0,s):\ s\in S_H\}$ generates
$C_2\bwr H$, we deduce that
\begin{eqnarray}\label{eq:lip condition}
\|\Psi\|_{\Lip}\lesssim 1.
\end{eqnarray}

Suppose now that $f:H\to C_2$ and let $m\in \N$ be the minimum
integer such that $\supp(f)\subseteq B_H(e_H,2^m)$. Then
\begin{multline}\label{eq:before jones}
\|\Psi(f,e_H)\|_p^p\ge \sum_{k=0}^\infty
2^{-(d-1)k}\sum_{\substack{y\in H\\
f\upharpoonright_{B_H(y,2^k)}\neq
\0\upharpoonright_{B_H(y,2^k)}}}\f\left(\frac{d_H(e_H,y)}{2^k}\right)^p\\
\ge \sum_{k=0}^\infty 2^{-(d-1)k}\left|\left\{y\in H:\ d_H(e_H,y)\ge
2^{k+1}\ \wedge\ \supp(f)\cap B_H(y,2^k)\neq
\emptyset\right\}\right|.
\end{multline}
Fix $k\le m-3$ and denote $n=N\left(\supp(f),2^{k-1}\right)$. Let
$x_1,\ldots, x_n\in H$ satisfy
\begin{equation}\label{eq:cover}\supp(f)\subseteq \bigcup_{i=1}^n
B_H\left(x_i,2^{k-1}\right).\end{equation} By the minimality of $n$
we are ensured that the balls
$\left\{B_H\left(x_i,2^{k-2}\right)\right\}_{i=1}^n$ are disjoint
and that there exists $y_i\in B_H\left(x_i,2^{k-1}\right)\cap
\supp(f)$. Write
$$
I\coloneqq \left\{i\in \{1,\ldots, n\}:\ d_H(y,e_H)\ge 2^{k+1}\
\forall y\in B_H\left(x_i,2^{k-2}\right)\right\}.
$$
Note that if $i\in I$ and $y\in B_H\left(x_i,2^{k-2}\right)$ then
$d_H(y_i,y)\le d_H(y_i,x_i)+d_H(y,x_i)\le 2^{k-1}+2^{k-2}<2^k$. Thus
in this case $\supp(f)\cap B_H(y,2^k)\neq \emptyset$, and therefore
\begin{equation}\label{eq:lower with I}
\left|\left\{y\in H:\ d_H(e_H,y)\ge 2^{k+1}\ \wedge\ \supp(f)\cap
B_H(y,2^k)\neq \emptyset\right\}\right|\ge
|I|\left|B_H\left(e_H,2^{k-2}\right)\right|\gtrsim 2^{kd} |I|.
\end{equation}
We shall now bound $|I|$ from below. By the minimality of $m$ there
exists $z\in \supp(f)$ such that $d_H(e_H,z)>2^{m-1}$.
By~\eqref{eq:cover} there is some $i\in \{1,\ldots n\}$ for which
$d_H(z,x_i)\le 2^{k-1}$. If $y\in B_H\left(x_i,2^{k-2}\right)$ then
$$d_H(y,e_H)\ge
d_H(e_H,z)-d_H(z,x_i)-d_H(x_i,y)> 2^{m-1}-2^{k-1}-2^{k-2}\ge
2^{k+1},$$ since by assumption $k\le m-3$. This shows that $|I|\ge
1$. Write $J\coloneqq \{1,\ldots,n\}\setminus I$. For each $i\in J$
there is some $y\in B_H\left(x_i,2^{k-2}\right)$ for which
$d_H(e_H,y)<2^{k+1}$. Hence $B_H\left(x_i,2^{k-2}\right)\subseteq
B_H\left(e_H, 2^{k+2}\right)$. Since the balls
$\left\{B_H\left(x_i,2^{k-2}\right)\right\}_{i=1}^n$ are disjoint it
follows that
$$
|J|a2^{(k-2)d}\stackrel{\eqref{eq:polygrowth}}{\le}
|J|\left|B_H\left(e_H,2^{k-2}\right)\right|\le
\left|B_H\left(e_H,2^{k+2}\right)\right|\stackrel{\eqref{eq:polygrowth}}{\le}
b2^{(k+2)d}.
$$
Thus $ n-|I|=|J|\lesssim 1$, which implies that $|I|\gtrsim n$.
Plugging this bound into~\eqref{eq:lower with I} we see that for
every $k\le m-3$ we have
$$
\left|\left\{y\in H:\ d_H(e_H,y)\ge 2^{k+1}\ \wedge\ \supp(f)\cap
B_H(y,2^k)\neq \emptyset\right\}\right|\gtrsim 2^{kd}
N\left(\supp(f),2^{k-1}\right).
$$
In combination with~\eqref{eq:before jones} we see that
\begin{eqnarray}\label{eq:before F0}
\|\Psi(f,e_H)\|_p^p\gtrsim \sum_{k=0}^{m-3} 2^{-(d-1)k}\cdot 2^{kd}
N\left(\supp(f),2^{k-1}\right)=\sum_{k=0}^{m-3} 2^{k}
N\left(\supp(f),2^{k-1}\right).
\end{eqnarray}
We claim that
\begin{eqnarray}\label{eq:for F0}
 \left|\supp(f)\right|+\sum_{k=0}^{m-3}
2^{k} N\left(\supp(f),2^{k-1}\right)\gtrsim d_{C_2\bwr
H}\big((f,e_H),(\0,e_H)\big).
\end{eqnarray}
Indeed, by combining~\eqref{eq:easybound} (with $\ell=0$)
and~\eqref{eq;distances in wreath} we see that
\begin{eqnarray}\label{eq:all the way to m}
\left|\supp(f)\right|+\sum_{k=0}^{m} 2^{k}
N\left(\supp(f),2^{k-1}\right)\gtrsim d_{C_2\bwr
H}\big((f,e_H),(\0,e_H)\big).
\end{eqnarray}
To check that~\eqref{eq:all the way to m} implies~\eqref{eq:for F0}
note that is is enough to deal with the case $\supp(f)\neq
\emptyset$, and that the fact that $\supp(f)\subseteq
B_H\left(e_H,2^m\right)$, combined with the doubling condition for
$(H,d_H)$, implies that for $k\in \{m-2,m-1,m\}$ we have
$N\left(\supp(f),2^{k-1}\right)\lesssim 1$. Thus~\eqref{eq:all the
way to m} implies~\eqref{eq:for F0} by inspecting the cases $m<3$
and $m\ge 3$ separately.

Fix $\e\in (0,1)$. By Assouad's theorem~\cite{Ass83} (see also the
exposition of this theorem in~\cite{Hei01}), since $H$ has
polynomial growth, and hence is a doubling metric space, there is a
function $\theta: H\to L_p$ such that for all $x,y\in H$ we have
\begin{eqnarray}\label{eq:assoud}
d_H(x,y)^{1-\e}\le \|\theta(x)-\theta(y)\|_p\lesssim
d_H(x,y)^{1-\e}\le d_H(x,y).
\end{eqnarray}
By translation we may assume that $\theta(e_H)=0$. We can now define
our embedding $$F:C_2\bwr H\to \ell_p(\Omega)\oplus \ell_p(H)\oplus
L_p$$ by $ F=\Psi\oplus f\oplus \theta$ (here we identify a finitely supported function $f:H\to  C_2$ as a member of $\R^H$, and hence a member of $\ell_p(H)$). Then $\|F\|_{\Lip}\coloneqq
L\lesssim 1$. Thus in order to prove~\eqref{eq:exponent attained},
and hence to complete the proof of Theorem~\ref{thm:polywreath}, it
remains to show that for all $(f,x)\in C_2\bwr H$ we have
\begin{eqnarray}\label{eq:three term}
d_{C_2\bwr
H}\big((f,x),(\0,e_H)\big)^{(1-\e)/p}\lesssim\|F(f,x)-F(\0,e_H)\|_p=\left(\|\Psi(f,x)\|_p^p+\left|\supp(f)\right|+\|\theta(x)\|_p^p\right)^{1/p}.
\end{eqnarray}
A combination of~\eqref{eq:before F0} and \eqref{eq:for F0} implies
that there exists $\eta>0$ which depends only on $a,b,d,p,\e$ such
that
$$
\eta d_{C_2\bwr H}\big((f,e_H),(\0,e_H)\big)^{1/p}\le
\left(\|\Psi(f,e_H)\|_p^p+\left|\supp(f)\right|\right)^{1/p}=\|F(f,e_H)-F(\0,e_H)\|_p.
$$
Hence
\begin{eqnarray*}
\|F(f,x)-F(\0,e_H)\|_p&\ge&
\|F(f,e_H)-F(\0,e_H)\|_p-\|F(f,x)-F(f,e_H)\|_p\\&\ge& \eta
d_{C_2\bwr H}\big((f,e_H),(\0,e_H)\big)^{1/p}-Ld_H(x,e_H)\\&\ge&
\eta \left[\max\left\{0,d_{C_2\bwr
H}\big((f,x),(\0,e_H)\big)-d_{C_2\bwr
H}\big((f,x),(f,e_H)\big)\right\}\right]^{1/p}-Ld_H(x,e_H)\\&=&\eta
\left[\max\left\{0,d_{C_2\bwr
H}\big((f,x),(\0,e_H)\big)-d_H(x,e_H)\right\}\right]^{1/p}-Ld_H(x,e_H)
\\&\ge& \frac{\eta}{4}d_{C_2\bwr H}\big((f,x),(\0,e_H)\big)^{1/p}\\
&\ge& \frac{\eta}{4}d_{C_2\bwr
H}\big((f,x),(\0,e_H)\big)^{(1-\e)/p},
\end{eqnarray*}
provided that
\begin{equation}\label{eq:for assouad}
d_H(x,e_H)\le \min \left\{\frac{\eta}{4L}d_{C_2\bwr
H}\big((f,x),(\0,e_H)\big)^{1/p},\frac12d_{C_2\bwr
H}\big((f,x),(\0,e_H)\big)\right\}.
\end{equation}
But if~\eqref{eq:for assouad} fails then $d_H(x,e_H)\gtrsim
d_{C_2\bwr H}\big((f,x),(\0,e_H)\big)^{1/p}$, in which case we can
use~\eqref{eq:assoud} to deduce that
$$
\|\theta(x)\|_p\ge d_H(e_H,x)^{1-\e}\gtrsim d_{C_2\bwr
H}\big((f,x),(\0,e_H)\big)^{(1-\e)/p},
$$
which implies~\eqref{eq:three term} and concludes the proof
of~\eqref{eq:exponent attained}.
\end{proof}

\begin{remark}\label{rem:attained} {\em Since the only reason for the
loss of $\e$ in~\eqref{eq:exponent attained} is the use of Assouad's
embedding in~\eqref{eq:assoud} we see that if $p>1$ and $(H,d_H)$
admits a bi-Lipschitz embedding into $L_p$ and has at least
quadratic growth then $\alpha_p^*(C_2\bwr H)=\frac{1}{p}$ is
attained. \fin}
\end{remark}

\begin{remark}\label{rem:old Z^2}
{\em In~\cite{NP07} it was shown that $\alpha_2^*(C_2\bwr \Z^2)\ge
\frac12$ via an embedding which we now describe. We are doing so for
several reasons. First of all there are some typos in the formulae
given for the embedding in~\cite{NP07} and we wish to take this
opportunity to publish a correct version. Secondly the embedding was
given in~\cite{NP07} without a detailed proof of its compression
bounds, and since it is based on a different and simpler approach
than our proof of Theorem~\ref{thm:polywreath} it is worthwhile to
explain it here. Most importantly, there are several ``coincidences"
which allow this approach to yield sharp bounds on
$\alpha_p^*(C_2\bwr \Z^d)$ only when $p=2$ and $d=2$, and we wish to
explain these subtleties here. We will therefore first describe the
embedding scheme in~\cite{NP07} for general $p\in[1,2]$ and $d\ge 2$
and then specialize to the case $p=d=2$. }

{\em
 Let $\left\{v_{y,r,g}:y\in \Z^d,\ r\in
\N\cup\{0\},\ g:y+[-r,r]^d\to \{0,1\}\right\}$ be a system of
disjoint unit vectors in $L_p$. Fix a parameter $\gamma>0$ which
will be determined later and define for every $(f,x)\in C_2\bwr
\Z^d$ a vector $F(f,x)=F_0(f,x)-F_0(\0,0)\in L_p$, where
\begin{equation*}\label{fix NP}
F_0(f,x)\coloneqq \sum_{y\in \Z^d} \sum_{r=0}^\infty
\frac{\max\left\{1-\frac{2r}{1+\|x-y\|_\infty},0\right\}}{1+\|x-y\|_\infty^{\gamma}}v_{y,r,f\upharpoonright_{y+[-r,r]^d}}
\end{equation*}
One checks as in the proof of Theorem~\ref{thm:polywreath} that $F$
is equivariant with respect to an appropriate action of $C_2\bwr
\Z^d$ on $L_p$. Moreover, one checks that $\|F(\1_{0},0)\|_p\lesssim
1$ and that for $x\in \{(\pm 1,0),(0,\pm 1)\}$ we have
\begin{multline}\label{eq:assume lower gamma}
\|F(0,x)\|_p^p\lesssim \sum_{y\in \Z^d}\sum_{r\in[0,
1+\|y\|_\infty/2]}\left(\frac{1+r}{(1+\|y\|_\infty)^{2+\gamma}}\right)^p
\lesssim \sum_{r=0}^\infty \sum_{\substack{k\ge 0\\k\ge2(r-1)}}
\sum_{\|y\|_\infty=k}\frac{(1+r)^p}{(1+k)^{(3+\gamma)p}}
\\\lesssim \sum_{r=1}^\infty r^p \sum_{k\ge
r}\frac{k^{d-1}}{(1+k)^{(2+\gamma)p}}\lesssim \sum_{r=0}^\infty
\frac{1}{r^{p+\gamma p-d}}<\infty,
\end{multline}
where in~\eqref{eq:assume lower gamma} we need to assume that
\begin{equation}\label{eq:assumption gamma}
\gamma>\frac{d+1-p}{p}.
\end{equation}
It follows that as long as~\eqref{eq:assumption gamma} holds true
$F$ is Lipschitz.}

{\em For the lower bound fix $(f,x)\in C_2\bwr \Z^d$ such that
$f\neq \0$ and let $R\ge 0$ be the smallest integer for which there
exists $z\in \supp(f)$ such that $\|z-x\|_\infty=R$, i.e., $R$ is
the smallest integer such that $\supp(f)\subseteq x+[-R,R]^d$. Note
that for every $y\in \Z^d$ such that $\|y-z\|_\infty \in [0,R]$ and
every $r\in [\|y-z\|_\infty,(1+R-\|y-z\|_\infty)/4]$ we have $z\in
y+[-r,r]^d$, and hence $\supp(f)\cap \left(y+[-r,r]^d\right)\neq
\emptyset$, and $\|y-x\|_\infty\ge R-\|y-z\|_\infty$, which implies
that $\frac{2r}{1+\|y-x\|_\infty}\le \frac12$. Thus:
\begin{multline}\label{eq:lower R}
\|F(f,x)\|_p^p\gtrsim \sum_{k=0}^R\sum_{\substack{y\in
\Z^d\\\|y-z\|_\infty=k}}\sum_{r\in
[k,(1+R-k)/4]}\frac{1}{\left(1+(k+R)^{\gamma}\right)^p}\\\gtrsim
\sum_{k\in[0,(1+R)/5]}\left(1+k^{d-1}\right)\cdot\frac{1+R-5k}{4}\cdot\frac{1}{\left(1+(k+R)^{\gamma}\right)^p}\gtrsim
R^{d+1-\gamma p}.
\end{multline}
Note the trivial bound:
\begin{equation}\label{eq:trivial TSP}
\TSP(\supp(f);x,x)\le \TSP\left(x+[-R,R]^d;x,x\right)\lesssim R^d.
\end{equation}
Assuming also that $\gamma<\frac{d+1}{p}$ we see that a combination
of~\eqref{eq:lower R} and~\eqref{eq:trivial TSP} implies that:
\begin{equation}\label{eq:use trivial TSP}
\|F(f,x)\|_p\gtrsim \TSP(\supp(f);x,x)^{\frac{d+1-\gamma p}{dp}}.
\end{equation}
Hence if we define $\Psi(x)=x\oplus F(x)\in \ell_p^d\oplus L_p$ we
get the lower bound
\begin{multline}\label{eq:before limit gamma}
\|\Psi(f,x)\|_p\gtrsim \|x\|_1+\TSP(\supp(f);x,x)^{\frac{d+1-\gamma
p}{dp}}\gtrsim
\left(d_{\Z^d}(x,0)+\TSP(\supp(f);x,x)\right)^{\frac{d+1-\gamma
p}{dp}}\\\gtrsim\left(d_{C_2\bwr
\Z^d}\left((f,x),(\0,0)\right)\right)^{\frac{d+1-\gamma p}{dp}}.
\end{multline}
Letting $\gamma$ tend from above to $\frac{d+1-p}{p}$
in~\eqref{eq:before limit gamma} we get the lower bound
\begin{equation}\label{eq:1/d}
\alpha_p^*\left(C_2\bwr \Z^d\right)\ge\frac{1}{d}.
\end{equation}
While~\eqref{eq:1/d} reproduces the result of~\cite{Tess06}, it
yields the sharp bound $\alpha_p^*\left(C_2\bwr \Z^d\right)=
\frac{1}{p}$ only when $p=d=2$, in which case the above embedding
coincides with the embedding used in~\cite{NP07}. This is why we
needed to use a new argument in our proof of
Theorem~\ref{thm:polywreath}. Note that if one attempts to use the
above reasoning while replacing the group $\Z^d$ by a general group
$H$ of growth rate $d$  one realizes that it used the bound
\begin{eqnarray}\label{eq;want d-1}
\left|B_H(e_H,r+1)\right|-\left|B_H(e_H,r)\right|\approx r^{d-1}.
\end{eqnarray}
Unfortunately the validity of~\eqref{eq;want d-1} is open for
general groups $H$ of growth rate $d$. To the best of our knowledge
the best known general upper bound on the growth rate of spheres is
the following fact: there exists $\beta>0$ (depending on the group
$H$ and the choice of generators) such that for every $r\in \N$ we
have
\begin{eqnarray}\label{eq:sphere}
\left|B_H(e_H,r+1)\right|-\left|B_H(e_H,r)\right|\lesssim
r^{d-\beta}.
\end{eqnarray}
This is an immediate corollary of a well known (simple) result in
metric geometry: since $\left|B_H(e,r)\right|\approx r^d$ the metric
space $(H,d_H)$ is doubling (moreover, the counting measure on $H$
is Ahlfors-David $d$-regular. See~\cite{Hei01} for a discussion of
these notions). By Lemma 3.3 in~\cite{CM98} (see also Proposition
6.12 in~\cite{Che99}) if $(X,d,\mu)$ is a geodesic doubling metric
measure space then for all $x\in X$, $r>0$ and $\delta\in (0,1)$ we
have
\begin{eqnarray}\label{eq:CM}
\mu\left(B_X(x,r)\setminus B_X(x,(1-\delta)r)\right)\le
(2\delta)^{\beta}\mu\left(B_X(x,r)\right),
\end{eqnarray}
where $\beta>0$ depends only on the doubling constant of the measure
$\mu$ (see~\cite{CM98,Che99} for a bound on $\beta$. In~\cite{NT07}
it is shown that the bound on $\beta$ from~\cite{CM98,Che99} is
asymptotically sharp as the doubling constant tends to $\infty$).
Clearly~\eqref{eq:CM} implies~\eqref{eq:sphere} if we let $\mu$ be
the counting measure on $H$ and $\delta=\frac{1}{r}$. While it is
natural to conjecture that it is possible to take $\beta=1$
in~\eqref{eq:sphere}, this has been proved when $H$ is a $2$-step
nilpotent group~\cite{Stoll98}, but it is unknown in general.
 \fin}
\end{remark}

\section{The zero section of $\Z\bwr \Z$}\label{sec:zero section}

This section is devoted to the proof of the following theorem:

\begin{thm}\label{thm:zero section}
Let $(\Z\bwr \Z)_0$ be the zero section of $\Z\bwr \Z$, i.e. the
subset of $\Z\bwr \Z$ consisting of all $(f,x)\in \Z\bwr \Z$ with
$x=0$, with the metric inherited from $\Z\bwr \Z$. Then for all
$p\in [1,2]$ we have
$$
\alpha_p^*\left((\Z\bwr \Z)_0,d_{\Z\bwr \Z}\right)=\frac{p+1}{2p}.
$$
\end{thm}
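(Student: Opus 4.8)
The statement has two halves: the upper bound $\alpha_p^*\left((\Z\bwr\Z)_0,d_{\Z\bwr\Z}\right)\le \frac{p+1}{2p}$ for $p\in[1,2]$, and the matching lower bound. As the introduction indicates, the upper bound already follows from the adaptation of the \cite{AGS06} argument recorded as Lemma 7.8 in \cite{NP07} (alternatively from the random-walk computation $\beta_p^*\left((\Z\bwr\Z)_0,d_{\Z\bwr\Z}\right)=\frac{2}{p+1}$ plugged into \eqref{eq:intro new ANP}, using that $(\Z\bwr\Z)_0$ is amenable and $L_p$ has Markov type $p$). So the real content here is constructing, for every $\e>0$, a Lipschitz map $F:(\Z\bwr\Z)_0\to L_p$ with $\|F(\phi)-F(\psi)\|_p\gtrsim d_{\Z\bwr\Z}(\phi,\psi)^{\frac{p+1}{2p}-\e}$.

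\textbf{Setting up coordinates.} An element of $(\Z\bwr\Z)_0$ is a finitely supported $f:\Z\to\Z$; by \eqref{eq;distances in wreath} its distance to the identity is $\TSP(\supp(f);0,0)+\sum_{x\in\Z}|f(x)|$, and for $H=\Z$ the TSP term is (up to a factor $2$) $\max\supp(f)-\min\supp(f)$, i.e.\ the ``width'' of the configuration. Thus the metric has essentially two ingredients: the total lamp mass $\|f\|_1=\sum_x|f(x)|$ and the width $W(f)$. The plan is to build $F$ as a direct sum of several pieces, each handling a range of these parameters, exactly in the spirit of the proof of Theorem~\ref{thm:polywreath}: one ``mass'' coordinate that simply records $f$ (contributing $\|f-g\|_1^{1/p}$ after taking $\ell_p$), plus a family of multi-scale coordinates built from indicator-type functions of $f$ restricted to dyadic windows, designed so that a configuration of width $W$ and small mass still produces $\|F(f)\|_p \gtrsim W^{\frac{p+1}{2p}-\e}$. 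The exponent $\frac{p+1}{2p}$ should emerge from optimizing the trade-off: a window of length $L$ can hold at least $\sim L$ ``interesting'' sub-windows, each contributing a weight like $L^{-a/p}$ to the $p$-th power, and the Lipschitz constraint (moving one lamp by one unit, or shifting a lamp value by one) forces $a$ to be chosen so that the telescoping sums of $p$-th powers of increments converge; the critical exponent works out to give compression $\frac{p+1}{2p}$.

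\textbf{Fourier-analytic core.} The introduction flags that the sharp construction is delicate and is done via a Fourier-analytic argument. I expect the heart of the matter is the following: one needs a map $\theta:\Z\to L_p$ (or more precisely a map sending an integer-valued configuration on an interval of length $L$ to $L_p$) whose compression is exactly $\frac{1}{\text{(right power)}}$ and which is moreover \emph{equivariant} under the natural $\Z$-shift action, so that the global $F$ can be assembled equivariantly over all dyadic scales and the Lipschitz bound checked on generators only (shift the lamplighter --- but here $x=0$ is fixed, so really: change one lamp value by $1$, or translate the whole configuration). The Fourier computation enters when estimating $\|\theta(x)-\theta(y)\|_p$ for the building-block embedding of $\Z$ (or of a cyclic group $\Z_N$) into $L_p$ with a prescribed decay of the ``Fourier transform'' of the kernel: one writes $\|\theta(x)-\theta(y)\|_p^p = \int |K(x-t)-K(y-t)|^p\,d\mu(t)$ and evaluates it, for $p<2$ using the subadditivity trick $|a-b|^p\le|a-b|^{p}$ handled by expansion, and for the lower bound exploiting a precise asymptotic of a singular integral / lattice sum that Fourier analysis on $\Z$ (Poisson summation, or estimating $\sum_n |e^{in\xi}-1|\cdot|\widehat K(n)|$) delivers.

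\textbf{Main obstacle.} The genuinely hard step is the lower bound on $\|F(f)\|_p$ when $f$ has large width but the mass coordinate is useless because $\|f\|_1$ is small --- i.e.\ proving that the multi-scale/Fourier coordinates alone recover $W(f)^{\frac{p+1}{2p}-\e}$. This requires (i) identifying, inside a width-$W$ configuration, $\gtrsim W$ dyadic windows at which $f$ is genuinely nonconstant, and (ii) summing their contributions in $L_p$ with the correct weights, which is where the Fourier estimate for the single-scale building block is invoked and where the exponent is pinned down. Establishing the Lipschitz (upper) bound for $F$ is comparatively routine multi-scale bookkeeping as in Theorem~\ref{thm:polywreath}: one checks it on the generators of $(\Z\bwr\Z)_0$, namely $f\mapsto f+\delta_0$ and the (externally applied) shift, and verifies the geometric sums of $p$-th powers of the increments converge thanks to the chosen exponent $\frac{p+1}{2p}<1$.
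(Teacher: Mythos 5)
The upper-bound half of your plan is fine (it is indeed Lemma~7.8 of~\cite{NP07}, with the $\beta_p^*$ computation of Section~\ref{sec:zero section walk} as an alternative), but your plan for the lower bound misidentifies the hard case, and the coordinates you propose provably cannot reach the exponent. A configuration of large width and small mass is the \emph{easy} case: the NP07 embedding $F_0$ of $\Z\bwr\Z$ (see~\eqref{eq:from NP07}) already recovers $\max\{|j|:f(j)\neq 0\}$ to the power $1-\e$, far more than $\frac{p+1}{2p}$. The genuinely hard configurations are those with many \emph{tall} lamps: take about $M$ lamps of height $h=M^a$ spread over an interval of length $M$, so $D\approx M^{1+a}$ and the target is $D^{\frac{p+1}{2p}}=M^{(1+a)\frac{p+1}{2p}}$. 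Your indicator-window multiscale vectors are insensitive to lamp heights: they take the same values on this configuration as on the corresponding height-$1$ configuration, whose distance to the origin is only $\approx M$, so by the Lipschitz condition that part of the embedding contributes at most $\approx M$; your mass coordinate contributes $\|f\|_1^{1/p}=M^{(1+a)/p}$, and an $\ell_p$-type coordinate contributes $\|f\|_p\approx M^{1/p+a}$. Comparing exponents, $1\ge(1+a)\frac{p+1}{2p}$ forces $a\le\frac{p-1}{p+1}$, $\frac{1+a}{p}\ge(1+a)\frac{p+1}{2p}$ forces $p\le 1$, and $\frac1p+a-(1+a)\frac{p+1}{2p}=\frac{(1-p)(1-a)}{2p}<0$ for $p>1$, $a<1$. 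So for every $p\in(1,2]$ and $a\in\left(\frac{p-1}{p+1},1\right)$ all of your proposed ingredients fall short simultaneously; no assembly of window-indicator vectors plus mass coordinates can give compression $\frac{p+1}{2p}$, and a coordinate that \emph{couples the height of each lamp with its position} is unavoidable.

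That coupling is exactly what the paper's Fourier-analytic piece supplies, and it is not what your ``Fourier-analytic core'' describes: you propose Fourier analysis to build an embedding of $\Z$ (positions, or windows) with prescribed compression, whereas the actual device applies characters to the lamp \emph{values}. The paper sets $\Phi(f,0)=\sum_{\ell,k}\sum_{|j|\in[2^{\ell-1}-1,2^{\ell}-1)}\frac{2^{(k+(p-1)\ell)/p}}{k+1}\exp\left(2\pi i f(j)/2^k\right)e_{j,k,\ell}$, so a lamp of height $|f(j)|$ at position scale $2^\ell$ contributes, at each dyadic value scale $2^k$, about $\min\{1,|f(j)|/2^k\}$ times a weight mixing $k$ and $\ell$; the required invariance is under addition of configurations (provided automatically by the characters), not under a shift action. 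The lower bound is then not a count of windows on which $f$ is nonconstant: one fixes a single pair of scales $(\ell,m)$ carrying a $1/\log^2$ fraction of $D\approx M+\sum_{\ell,m}2^m|E(\ell,m)|$, and combines the $\Phi$-contribution $\approx 2^{m/p}2^{\ell(p-1)/p}|E(\ell,m)|^{1/p}$ with the $F_0$-contributions $2^{(1-\e)\ell}$ and $2^m|E(\ell,m)|^{1/p}$ via the elementary inequality~\eqref{eq:strange AM-GM} to extract $\left(2^\ell+2^m|E(\ell,m)|\right)^{\frac{p+1}{2p}}$ up to logarithmic factors. Without this height-times-position coordinate and the three-term balancing, the construction you outline cannot be completed for any $p>1$ (for $p=1$ the mass plus width coordinates do trivially give compression $1$).
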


\begin{proof} The fact that $\alpha_p^*\left((\Z\bwr \Z)_0,d_{\Z\bwr\Z}\right)\le
\frac{p+1}{2p}$ follows from a variant of an argument
from~\cite{AGS06}---see Lemma 7.8 in~\cite{NP07}. We present an
alternative proof of this fact in Section~\ref{sec:zero section
walk} below.

Fix $\e\in (0,1)$. In~\cite{NP07} we have shown that there exists a
function $F_0:\Z\bwr\Z\to L_p$ such that the metric
$\|F_0(f_,x_1)-(f_2,x_2)\|_p$ is $\Z\bwr \Z$-invariant and for all
$(f,x)\in Z\bwr Z$ we have
\begin{eqnarray}\label{eq:from NP07}
|x|^{(1-\e)p}+\sum_{j\in \Z}|f(j)|^p+\max\left\{|j|^{(1-\e)p}:\
f(x+j)\neq 0\right\}\lesssim\|F_0(f,x)-F_0(\0,0)\|_p^p\lesssim
d_{Z\bwr Z}\big((f,x),(\0,0)\big)^p,
\end{eqnarray}
where here, and in what follows, the implied constants depend only
on $p$ and $\e$. We note that while~\eqref{eq:from NP07} was not
stated as a separate result in~\cite{NP07}, it is contained in the
proof of Theorem 3.3 there---see equation (28) in~\cite{NP07} with
$a=1$ and $b=1-\e$. Alternatively~\eqref{eq:from NP07} is explained
in detail for the case $p=2$ in Remark 2.2 of~\cite{ANP07}---the
same argument works when we replace in that proof $L_2$ by $L_p$ and
let $\alpha$ be arbitrarily close to $(p-1)/p$ (instead of
arbitrarily close to $1/2$).

Let $\{e_{j,k,\ell}:\ j,k,\ell\in \Z\}$ be the standard basis of
$\ell_p(\Z\times\Z\times\Z)$. For every $(f,0)\in(\Z\bwr \Z)_0$
define
$$
\Phi(f,0)=\sum_{\ell=1}^\infty \sum_{k=0}^\infty
\sum_{\substack{j\in \Z\\ |j|\in
[2^{\ell-1}-1,2^{\ell}-1)}}\frac{2^{(k+(p-1)\ell)/p}}{k+1}\exp\left(\frac{2\pi
if(j)}{2^k}\right)e_{j,k,\ell}.
$$
Our embedding of $(\Z\bwr \Z)_0$ will be
$$
F\coloneqq F_0\oplus \Phi\in \ell_p(\Z\times\Z\times\Z)\oplus L_p.
$$
Observe that for every $(f,0),(g,0)\in (\Z\bwr \Z)_0$ we have
$\|\Phi(f,0)-\Phi(g,0)\|_p=\|\Phi(f-g,0)-\Phi(\0,0)\|_p$, so it will
suffice to prove the required compression bounds for
$\|F(f,0)-F(g,0)\|_p$ when $g=0$.

From now on we shall fix $(f,0)\in (\Z\bwr\Z)_0$. For every
$\ell,m\in \Z$ denote
$$
E(\ell,m)=\left\{j:\in \Z:\ |j|\in [2^{\ell-1}-1,2^{\ell}-1)\
\wedge\  |f(j)|\in [2^m,2^{m+1})\right\}.
$$
We also write $M\coloneqq \max\{|j|:\ f(j)\neq 0\}$, so that
\begin{equation}\label{eq:formula for dist}
d_{\Z\bwr \Z}\big((f,0),(\0,0)\big)\approx M+\|f\|_1=M+\sum_{j\in
\Z} |f(j)|\approx M+\sum_{\ell=1}^\infty\sum_{m=0}^\infty
2^{m}|E(\ell,m)|.
\end{equation}
Now,
\begin{multline}\label{eq:decompose m}
\left\|\Phi(f,0)-\Phi(\0,0)\right\|_p^p=\sum_{\ell=1}^\infty
\sum_{k=0}^\infty \sum_{\substack{j\in \Z\\ |j|\in
[2^{\ell-1}-1,2^{\ell}-1)}}\frac{2^{k+(p-1)\ell}}{(k+1)^{p}}\left|1-\exp\left(\frac{2\pi
if(j)}{2^k}\right)\right|^p\\=\sum_{\ell=1}^\infty \sum_{k=0}^\infty
\frac{2^{k+(p-1)\ell}}{(k+1)^{p}} \sum_{m=0}^\infty\sum_{j\in
E(\ell,m)}\left|1-\exp\left(\frac{2\pi if(j)}{2^k}\right)\right|^p.
\end{multline}
Note that
\begin{eqnarray}\label{eq:small m case}
m\le k-2\implies \sum_{j\in E(\ell,m)}\left|1-\exp\left(\frac{2\pi
if(j)}{2^k}\right)\right|^p\approx 2^{p(m-k)}|E(\ell,m)|.
\end{eqnarray}
and for all $m,k\in \Z$,
\begin{eqnarray}\label{eq:large m case}
\sum_{j\in E(\ell,m)}\left|1-\exp\left(\frac{2\pi
if(j)}{2^k}\right)\right|^p\lesssim |E(\ell,m)|.
\end{eqnarray}
Plugging~\eqref{eq:small m case} and~\eqref{eq:large m case}
into~\eqref{eq:decompose m} we see that
\begin{multline}\label{eq:before AM-GM}
\left\|\Phi(f,0)-\Phi(\0,0)\right\|_p^p\lesssim \sum_{\ell=1}^\infty
\sum_{m=0}^\infty \left(\sum_{k=0}^{m+1}
\frac{2^{k+(p-1)\ell}}{(k+1)^{p}}|E(\ell,m)|+\sum_{k=m+2}^\infty
\frac{2^{k+(p-1)\ell}}{(k+1)^{p}}2^{p(m-k)}|E(\ell,m)|\right)\\
\lesssim \sum_{\ell=1}^\infty \sum_{m=0}^\infty
\frac{2^{m+(p-1)\ell}}{(m+1)^{p}}|E(\ell,m)|\le
\left(\sum_{\ell=1}^\infty \sum_{m=0}^\infty
\left(\frac{2^{m+(p-1)\ell}}{(m+1)^{p}}|E(\ell,m)|\right)^{1/p}\right)^p.
\end{multline}
Using the fact that for all $a,b\ge 0$ we have $ab^{p-1}\le
\left(\frac{a+b}{2}\right)^p$ we can bound the summands
in~\eqref{eq:before AM-GM} as follows:
\begin{equation}\label{eq:AM-GM}
\left(\frac{2^{m+(p-1)\ell}}{(m+1)^{p}}|E(\ell,m)|\right)^{1/p}\lesssim
\left\{\begin{array}{ll}2^m|E(\ell,m)|+\frac{2^\ell}{(m+1)^{p/(p-1)}}&
\mathrm{if}\ E(\ell,m)\neq \emptyset,\\
2^m|E(\ell,m)|& \mathrm{otherwise}.\end{array}\right.
\end{equation}
Note that if $E(\ell,m)\neq \emptyset$ then there exists $j\in \Z$
with $|j|\in [2^{\ell-1}-1,2^{\ell}-1)$ such that $f(j)\neq 0$. By
the definition of $M$ this implies that $2^{\ell}<M$. Using this
observation while substituting the the estimates~\eqref{eq:AM-GM}
in~\eqref{eq:before AM-GM} we see that
\begin{multline}\label{eq:upper Phi}
\left\|\Phi(f,0)-\Phi(\0,0)\right\|_p\lesssim \sum_{m=0}^\infty
\sum_{\ell=1}^\infty 2^m
|E(\ell,m)|+\sum_{\ell=1}^{\lfloor\log_2M\rfloor}2^\ell
\sum_{m=0}^\infty \frac{1}{(m+1)^{p/(p-1)}}\\\lesssim
\sum_{\ell=1}^\infty\sum_{m=0}^\infty 2^m
|E(\ell,m)|+M\stackrel{\eqref{eq:formula for dist}}{\approx}
d_{\Z\bwr \Z}\big((f,0),(\0,0)\big).
\end{multline}
This shows that $\|F\|_{\Lip}\lesssim 1$.

In the reverse direction write
$$
D\coloneqq d_{\Z\bwr
\Z}\big((f,0),(\0,0)\big)\stackrel{\eqref{eq:formula for
dist}}{\approx} M +\sum_{2^\ell<M}\sum_{|j|\in
[2^{\ell-1}-1,2^{\ell}-1)} |f(j)|\approx
\sum_{2^\ell<M}\left(2^\ell+\sum_{|j|\in [2^{\ell-1}-1,2^{\ell}-1)}
|f(j)|\right)
$$
It follows that there exists an integer $\ell\lesssim \log M$ such
that
\begin{eqnarray}\label{eq:find scale 1}
D\lesssim \log(M+1) \cdot\left(2^\ell+\sum_{|j|\in
[2^{\ell-1}-1,2^{\ell}-1)} |f(j)|\right).
\end{eqnarray}
We shall fix this $\ell$ from now on. Observe that
\begin{eqnarray*}\label{eq:find scale 2}
 \sum_{|j|\in [2^{\ell-1}-1,2^{\ell}-1)} |f(j)|\approx
\sum_{2^{m+1}<\|f\|_1} 2^m|E(\ell,m)|.
\end{eqnarray*}
Hence there exists an integer $m\lesssim \log(1+\|f\|_1)$ such that
\begin{eqnarray}\label{eq:find scale 3}
 \sum_{|j|\in [2^{\ell-1}-1,2^{\ell}-1)} |f(j)| \lesssim
 2^m|E(\ell,m)|\cdot\log(1+\|f\|_1).
\end{eqnarray}
We shall fix this $m$ form now on. Combining~\eqref{eq:find scale 1}
with~\eqref{eq:find scale 3} yields the bound:
\begin{equation}\label{eq:log dist}
D\lesssim \log(M+1)
\cdot\left(2^\ell+2^m|E(\ell,m)|\cdot\log(1+\|f\|_1)\right)
\stackrel{\eqref{eq:formula for dist}}{\lesssim}
\left(\log(D+1)\right)^2\cdot \left(2^\ell+2^m|E(\ell,m)|\right).
\end{equation}

Substitute~\eqref{eq:small m case} into~\eqref{eq:decompose m} to
get the lower bound
$$
\left\|\Phi(f,0)-\Phi(\0,0)\right\|_p^p\gtrsim
 \sum_{k=m+2}^\infty
\frac{2^{k+(p-1)\ell}}{(k+1)^p}2^{p(m-k)}|E(\ell,m)|\gtrsim
\frac{2^{m+(p-1)\ell}}{(m+1)^p}|E(\ell,m)|\gtrsim
\frac{2^{m+(p-1)\ell}}{(\log(D+1))^p}|E(\ell,m)|.
$$
Also~\eqref{eq:from NP07} implies that
$$
\left\|F_0(f,0)-F_0(\0,0)\right\|_p^p\gtrsim
M^{(1-\e)p}+2^{mp}|E(\ell,m)|\gtrsim 2^{(1-\e)\ell
p}+2^{mp}|E(\ell,m)|.
$$
Thus \begin{multline}\label{total lower bound at scale}
\left\|F(f,0)-F(\0,0)\right\|_p\gtrsim
2^{(1-\e)\ell}+2^m|E(\ell,m)|^{1/p} + \frac{1}{\log(D+1)}\cdot
2^{m/p}|E(\ell,m)|^{1/p}2^{\ell(p-1)/p}\\ \gtrsim \frac{1}{\log
(D+1)}\cdot
\left(2^{\ell}+2^{m}|E(\ell,m)|^{1/p}+2^{m/p}|E(\ell,m)|^{1/p}2^{\ell(p-1)/p}\right)^{1-\e}.
\end{multline}
We claim that
\begin{eqnarray}\label{eq:strange AM-GM}
2^{\ell}+2^{m}|E(\ell,m)|^{1/p}+2^{m/p}|E(\ell,m)|^{1/p}2^{\ell(p-1)/p}\ge
\frac{2^{\ell}+\left(2^m|E(\ell,m)|\right)^{\frac{p+1}{2p}}}{2}.
\end{eqnarray}
Indeed, if $\left(2^m|E(\ell,m)|\right)^{\frac{p+1}{2p}}\le 2^\ell$
then~\eqref{eq:strange AM-GM} is trivial, so assume that
$a\coloneqq\left(2^m|E(\ell,m)|\right)^{\frac{p+1}{2p}}\ge 2^\ell$.
Since $|E(\ell,m)|= 2^{-m}\cdot a^{2p/(p+1)}$ we see that
\begin{eqnarray}\label{eq:numerical}
2^{\ell}+2^{m}|E(\ell,m)|^{1/p}+2^{m/p}|E(\ell,m)|^{1/p}2^{\ell(p-1)/p}\ge
2^{(p-1)m/p}a^{2/(p+1)}+a^{2/(p+1)}2^{\ell(p-1)/p}.
\end{eqnarray}
Note that by definition $2^{-m}\cdot a^{2p/(p+1)}=|E(\ell,m)|\le
2^{\ell}$, so $2^{m}\ge 2^{-\ell}\cdot a^{2p/(p+1)}$. Substituting
this bound into~\eqref{eq:numerical} we see that
\begin{multline*}
2^{\ell}+2^{m}|E(\ell,m)|^{1/p}+2^{m/p}|E(\ell,m)|^{1/p}2^{\ell(p-1)/p}\ge
2^{-\ell(p-1)/p}\cdot a^{2p/(p+1)}+a^{2/(p+1)}2^{\ell(p-1)/p}\\\ge
2a=2\left(2^m|E(\ell,m)|\right)^{\frac{p+1}{2p}},
\end{multline*}
where we used the arithmetic mean/geometric mean inequality. This
completes the proof of~\eqref{eq:strange AM-GM}.

A combination of~\eqref{eq:log dist}, \eqref{total lower bound at
scale} and~\eqref{eq:strange AM-GM} yields
\begin{multline*}
\left\|F(f,0)-F(\0,0)\right\|_p\gtrsim \frac{1}{\log(D+1)}\cdot
\left(2^{\ell}+\left(2^m|E(\ell,m)|\right)^{\frac{p+1}{2p}}\right)^{1-\e}\\\gtrsim
\frac{1}{\log(D+1)}\cdot
\left(2^{\ell}+2^m|E(\ell,m)|\right)^{(1-\e)\frac{p+1}{2p}}\gtrsim
\frac{D^{(1-\e)\frac{p+1}{2p}}}{(\log(D+1))^{1+2(1-\e)\frac{p+1}{2p}}}\gtrsim
D^{(1-2\e)\frac{p+1}{2p}}.
\end{multline*}
This completes the proof of Theorem~\ref{thm:zero section}.
\end{proof}

\section{General compression upper bounds for amenable
groups}\label{sec:amenable beta}

Let $\Gamma$ be a group which is generated by the finite symmetric
set $S\subseteq \Gamma$. Let $\rho$ be a left-invariant metric on
$\Gamma$ such that $B_\rho(e_\Gamma,r)=\{x\in\Gamma:\ \rho(x,e)\le
r\}$ is finite for all $r\ge 0$. In most of our applications of the
ensuing arguments the metric $\rho$ will be the word metric induced
by $S$, but we will also need to deal with other invariant metrics
(see Section~\ref{sec:zero section walk}).

 Given a symmetric probability measure $\mu$
on $\Gamma$ let $\{g_k\}_{k=1}^\infty$ be i.i.d. elements of
$\Gamma$ which are distributed according to $\mu$. The $\mu$-random
walk $\{W_t^\mu\}_{t=0}^\infty$ is defined as $W_0^\mu=e_\Gamma$ and
$W_t^\mu=g_1g_2\cdots g_t$ for $t\in \N$. Fix $p\ge 1$ and assume
that \begin{equation}\label{eq:integrability}\int_\Gamma
\rho(x,e_\Gamma)^pd\mu(x)=\E_{\mu}\left[\rho\left(W_1^\mu,e_\Gamma\right)^p\right]<\infty.\end{equation}

Let $\{\mu_t\}_{t=1}^\infty$ be a sequence of symmetric probability
measures satisfying the integrability
condition~\eqref{eq:integrability} and define
\begin{eqnarray}\label{eq:def betap}
\beta_p^*\left(\{\mu_t\}_{t=1}^\infty,\rho\right)\coloneqq
\limsup_{t\to\infty}\frac{\log\left(\E_{\mu_t}\left[\rho\left(W_t^{\mu_t},e_\Gamma\right)\right]\right)}
{\log\left(t \E_{\mu_t}
\left[\rho\left(W_1^{\mu_t},e_\Gamma\right)^p\right]\right)}.
\end{eqnarray}
Finally we let $\beta_p^*(\Gamma,\rho)$ be the supremum of
$\beta_p^*\left(\{\mu_t\}_{t=1}^\infty,\rho\right)$ over all
sequences of symmetric probability measures $\{\mu_t\}_{t=1}^\infty$
on $\Gamma$ satisfying
\begin{equation}\label{eq:condition}
\forall t\in \N\ \ \int_\Gamma
\rho(x,e_\Gamma)^pd\mu_t(x)<\infty\quad \mathrm{and}\quad
\lim_{t\to\infty} \big( t
\mu_t\left(\Gamma\setminus\{e_\Gamma\}\right)\big)=\infty.
\end{equation}
When $\rho$ is the word metric induced by the symmetric generating
set $S$ we will use the simplified notation
$\beta_p^*(\Gamma,\rho)=\beta_p^*(\Gamma)$. This convention does not
create any ambiguity since clearly $\beta_p^*(\Gamma,\rho)$ does not
depend on the choice of the finite symmetric generating set $S$
(this follows from the fact that due to~\eqref{eq:condition} the
denominator in~\eqref{eq:def betap} tends to $\infty$ with $t$---we
establish this fact below).

To better explain the definition~\eqref{eq:def betap} we shall make
some preliminary observations before passing to the main results of
this section. We first note that
\begin{equation}\label{eq:beta<1}
\beta_p^*(\Gamma,\rho)\le 1. \end{equation} Indeed, since we are
assuming that all the $\rho$-balls are finite there exists
$\rho_0>0$ such that for every distinct $x,y\in \Gamma$ we have
$\rho(x,y)\ge \rho_0$. Hence for every symmetric probability measure
$\mu$ on $\Gamma$ which satisfies~\eqref{eq:integrability} we have
\begin{equation}\label{eq:zero prob}
\E_{\mu} \left[\rho\left(W_1^{\mu},e_\Gamma\right)^p\right]\ge
\rho_0^p\mu\left(\Gamma\setminus\{e_\Gamma\}\right).
\end{equation}
H\"older's inequality therefore implies that:
\begin{multline}\label{eq:holder}
\E_{\mu}\left[\rho\left(W_1^{\mu},e_\Gamma\right)\right]=
\E_{\mu}\left[\rho\left(W_1^{\mu},e_\Gamma\right)\1_{\Gamma\setminus\{e_\gamma\}}\right]\\\le
\mu\left(\Gamma\setminus\{e_\Gamma\}\right)^{(p-1)/p}
\left(\E_\mu\left[\rho\left(W_1^\mu,e_\Gamma\right)^p\right]\right)^{1/p}\stackrel{\eqref{eq:zero
prob}}{\le}\frac{1}{\rho_0^{p-1}}\E_\mu\left[\rho\left(W_1^\mu,e_\Gamma\right)^p\right].
\end{multline}
On the other hand, by the triangle inequality we have:
\begin{equation}\label{eq:trianlge 1 bound}
\E_\mu\left[\rho\left(W_t^\mu,e_\Gamma\right)\right]\le \sum_{i=1}^t
\E_\mu\left[\rho\left(W_i^\mu,W_{i-1}^\mu\right)\right]=t\E_\mu\left[\rho\left(W_1^\mu,e_\Gamma\right)\right]
\stackrel{\eqref{eq:holder}}{\le}\frac{t}{\rho_0^{p-1}}\E_\mu\left[\rho\left(W_1^\mu,e_\Gamma\right)^p\right].
\end{equation}
It follows that if $\{\mu_t\}_{t=1}^\infty$ are symmetric
probability measures on $\Gamma$ satisfying~\eqref{eq:condition}
then
\begin{equation*}\label{eq:limsup}
\limsup_{t\to\infty}\frac{\log\left(\E_{\mu_t}\left[\rho\left(W_t^{\mu_t},e_\Gamma\right)\right]\right)}
{\log\left(t \E_{\mu_t}
\left[\rho\left(W_1^{\mu_t},e_\Gamma\right)^p\right]\right)}\stackrel{\eqref{eq:zero
prob}\wedge\eqref{eq:trianlge 1
bound}}{\le}\limsup_{t\to\infty}\left(
1-\frac{(p-1)\log\rho_0}{\log\left(t\mu_t\left(\Gamma\setminus\{e_\Gamma\}\right)\right)+p\log\rho_0}\right)
\stackrel{\eqref{eq:condition}}{=}1,
\end{equation*}
implying~\eqref{eq:beta<1}.

We also claim that if $1\le q\le p<\infty$ then
\begin{equation}\label{eq:monotonepq}
\beta_p^*(\Gamma,\rho)\le \beta_q^*(\Gamma,\rho).
\end{equation}
 Indeed, let
$\{\mu_t\}_{t=1}^\infty$ be symmetric probability measures on
$\Gamma$ satisfying~\eqref{eq:condition} and note that
\begin{equation}\label{eq:pq}
\E_{\mu_t} \left[\rho\left(W_1^{\mu_t},e_\Gamma\right)^q\right]\le
\mu_t(\Gamma\setminus\{e_\Gamma\})^{(p-q)/p}\left(\E_{\mu_t}
\left[\rho\left(W_1^{\mu_t},e_\Gamma\right)^p\right]\right)^{q/p}\stackrel{\eqref{eq:zero
prob}}{\le} \frac{1}{\rho_0^{p-q}}\E_{\mu_t}
\left[\rho\left(W_1^{\mu_t},e_\Gamma\right)^p\right].
\end{equation}
Hence,
\begin{multline*}
\limsup_{t\to\infty}\frac{\log\left(\E_{\mu_t}\left[\rho\left(W_t^{\mu_t},e_\Gamma\right)\right]\right)}
{\log\left(t \E_{\mu_t}
\left[\rho\left(W_1^{\mu_t},e_\Gamma\right)^p\right]\right)}\stackrel{\eqref{eq:pq}}{\le}\limsup_{t\to\infty}\frac{\log\left(\E_{\mu_t}\left[\rho\left(W_t^{\mu_t},e_\Gamma\right)\right]\right)}
{\log\left(t \E_{\mu_t}
\left[\rho\left(W_1^{\mu_t},e_\Gamma\right)^q\right]\right)}\cdot\frac{1}{1+\frac{(p-q)\log\rho_0}{\log\left(t
\E_{\mu_t}
\left[\rho\left(W_1^{\mu_t},e_\Gamma\right)^q\right]\right)}}\\
\stackrel{\eqref{eq:zero prob}}{\le}
\limsup_{t\to\infty}\frac{\log\left(\E_{\mu_t}\left[\rho\left(W_t^{\mu_t},e_\Gamma\right)\right]\right)}
{\log\left(t \E_{\mu_t}
\left[\rho\left(W_1^{\mu_t},e_\Gamma\right)^q\right]\right)}\cdot\frac{1}{1+\frac{(p-q)\log\rho_0}{\log\left(t
\mu_t(\Gamma\setminus\{e_\Gamma\})\right)+q\log\rho_0}}
\stackrel{\eqref{eq:condition}}{\le}\beta_q^*(\Gamma,\rho),
\end{multline*}
implying~\eqref{eq:monotonepq}.

The main result of this section is the following theorem:

\begin{thm}\label{thm:beta p}
Assume that $\Gamma$ is amenable and that $X$ is a metric space with
Markov type $p$. Then for every left-invariant metric $\rho$ on
$\Gamma$ such that $|B_\rho(e_\Gamma,r)|<\infty$ for all $r\ge 0$ we
have:
$$
\alpha_X^*(\Gamma,\rho)\le \frac{1}{p\beta_p^*(\Gamma,\rho)}.
$$
\end{thm}

\begin{remark} {\em In~\cite{ANP07,NP07} it was essentially
shown that the bound in Theorem~\ref{thm:beta p} holds true with
$\beta_p^*(\Gamma,\rho)$ replaced by $\beta_\infty^*(\Gamma,\rho)$,
which is a weaker bound due to~\eqref{eq:monotonepq}. More
precisely~\cite{ANP07,NP07} dealt with the case when all the
measures $\mu_t$ equal a fixed measure $\mu$, in which case the
second requirement of~\eqref{eq:condition} is simply that $\mu$ is
not supported on $\{e_\Gamma\}$. If we restrict to this particular
case we can define an analogous parameter by
$$
\widetilde\beta_p^*(\mu,\rho)=
\beta_p^*\left(\{\mu,\mu,\mu,\ldots\},\rho\right)\coloneqq
\limsup_{t\to\infty}\frac{\log\left(\E_{\mu}\left[\rho\left(W_t^{\mu},e_\Gamma\right)\right]\right)}{\log
t}.
$$
and similarly by taking the supremum over all symmetric probability
measures measures $\mu$ satisfying~\eqref{eq:condition} we can
define the parameter $\widetilde\beta_p^*(\Gamma,\rho)$. An
inspection of the results in~\cite{ANP07,NP07} shows that a variant
of Theorem~\ref{thm:beta p} is established there with
$\beta_p^*(\Gamma,\rho)$ replaced by
$\widetilde\beta_\infty^*(\Gamma,\rho)$. Thus Theorem~\ref{thm:beta
p} is formally stronger than the results of~\cite{ANP07,NP07}. As we
shall see in Section~\ref{sec:Lp comp}, this is a strict improvement
which is crucial for our proof of the bound $\alpha_p^*(\Z\bwr
\Z)\le \frac{p}{2p-1}$, and in Section~\ref{sec:zero section walk}
we will also need to use a family of non-identical measures
$\{\mu_t\}_{t=1}^\infty$.\fin}
\end{remark}

\begin{proof}[Proof of Theorem~\ref{thm:beta p}]
 Let
$\{F_n\}_{n=0}^\infty$ be a F\o lner sequence for $\Gamma$, i.e.,
for every $\e>0$ and any finite $K\subseteq \Gamma$, we have
$|F_n\triangle (F_nK)|\le \e|F_n|$ for large enough $n$. Fix $\beta<
\beta_p^*(\Gamma,\rho)$. Then there exists a sequence of symmetric
probability measures $\{\mu_t\}_{t=1}^\infty$ on $\Gamma$ which
satisfy~\eqref{eq:condition} and
$\beta<\beta_p^*\left(\{\mu_t\}_{t=1}^\infty,\rho\right)$. This
implies that there exists an increasing sequence of integers
$\{t_k\}_{k=1}^\infty$ for which
$$
\E_{\mu_{t_k}}\left[\rho\left(W_{t_k}^{\mu_{t_k}},e_\Gamma\right)\right]\ge
t_k^{\beta }\left(
\E_{\mu_{t_k}}\left[\rho\left(W_1^{\mu_{t_k}},e_\Gamma\right)^p\right]\right)^{\beta},
$$
for all $k$. For every $t,r\in \N$ consider the event
$$
\Lambda_t(r)\coloneqq \bigcap_{j=1}^t \left\{W_j^{\mu_t}\in
B_\rho(e_\Gamma,r)\right\}.
$$
By the monotone convergence theorem for every $k\in \N$ there exists
$r_k\in \N$ such that
\begin{eqnarray}\label{eq:monotone}
\E_{\mu_{t_k}}\left[\rho\left(W_{t_k}^{\mu_{t_k}},e_\Gamma\right)\1_{\Lambda_{t_k}(r_k)}\right]\ge
\frac12 t_k^{\beta }\left(
\E_{\mu_{t_k}}\left[\rho\left(W_1^{\mu_{t_k}},e_\Gamma\right)^p\right]\right)^{\beta}.
\end{eqnarray}
Since $|B_\rho(e_\Gamma,r_k)|<\infty$ for every $k\in \N$ we can
find $n_k\in \N$ such that if we denote $A\coloneqq
F_{n_k}B_\rho(e_\Gamma,r_k)\supseteq F_{n_k}$ then we have (by the
F\o lner condition with $\e=1$),
\begin{eqnarray}\label{eq:use folner}
|A\setminus F_{n_k}|\le |F_{n_k}|\implies |F_{n_k}|\ge \frac12 |A|.
\end{eqnarray}

Fix $k\in \N$ and let $\{g_i\}_{i=1}^\infty\subseteq \Gamma$ be
i.i.d. group elements distributed according to $\mu_{t_k}$ such that
$W_t^{\mu_{t_k}}=g_1g_2\cdots g_t$ for every $t\in \N$. Let $Z_0$ be
uniformly distributed over $A$ and independent of
$\{g_i\}_{i=1}^\infty$. For $t\in \N$ define
$$
Z_t\coloneqq \left\{\begin{array}{ll}  Z_{t-1}g_t &\mathrm{if\ } Z_{t-1}g_t\in A,\\
Z_{t-1} & \mathrm{otherwise}.
\end{array}\right.
$$
Consider the event $\Omega\coloneqq \{Z_0\in F_{n_k}\}\cap
\Lambda_{t_{n_k}}(r_k)$. By construction when $\Omega$ occurs we
have $Z_{t_k}=Z_0W_{t_k}^{\mu_{t_k}}$. Hence
\begin{multline}\label{eq:in A}
\E_{\mu_{t_k}}\left[\rho\left(Z_{t_k},Z_0\right)\right]\ge
\E_{\mu_{t_k}}\left[\rho\left(Z_0W_{t_k}^{\mu_{t_k}},Z_0\right)\1_\Omega\right]\\\stackrel{(*)}{=}
\Pr\left[Z_0\in
F_{n_k}\right]\cdot\E_{\mu_{t_k}}\left[\rho\left(W_{t_k}^{\mu_{t_k}},e_\Gamma\right)\1_{\Lambda_{t_k}(r_k)}\right]
\stackrel{\eqref{eq:monotone}\wedge \eqref{eq:use folner}}{\ge}
\frac14 t_k^{\beta }
\left(\E_{\mu_{t_k}}\left[\rho\left(W_1^{\mu_{t_k}},e_\Gamma\right)^p\right]\right)^{\beta},
\end{multline}
where in $(*)$ we used the independence of $Z_0$ and
$\{g_i\}_{i=1}^\infty$ and the left-invariance of $\rho$.

 On the other hand fix $\alpha\in (0,1)$ and assume that there exists an embedding
 $f:\Gamma\to X$ and $c,C\in (0,\infty)$ such that
 \begin{eqnarray}\label{eq:compression assumption}
x,y\in \Gamma \implies c\rho(x,y)^\alpha\le d_X(f(x),f(x))\le
C\rho(x,y).
 \end{eqnarray}
Our goal is to show that $\alpha\le \frac{1}{p\beta}$. Since
$\beta<1$ this inequality is vacuous if $p\alpha<1$. We may
therefore assume that $p\alpha\ge 1$. Since $\{Z_t\}_{t=0}^\infty$
is a stationary reversible Markov chain, for every $M> M_p(X)$ and
$k\in \N$ we have
\begin{multline}\label{eq:upper markov type}
\E\left[d_X(f(Z_{t_k}),f(Z_0))^p\right]\le M^pt_k
\E\left[d_X(f(Z_1),f(Z_0))^p\right]\\\stackrel{\eqref{eq:compression
assumption}}{\le} M^pC^pt_k
\E\left[\rho(Z_1,Z_0)^p\right]\stackrel{(**)}{\le} M^pC^pt_k
\E_{\mu_{t_k}}\left[\rho\left(W_1^{\mu_{t_k}},e_\Gamma\right)^p\right],
\end{multline}
Where in $(**)$ we used the point-wise inequality $\rho(Z_1,Z_0)\le
\rho(g_1,e_\Gamma)=\rho\left(W_1^{\mu_{t_k}},e_\Gamma\right)$. On
the other hand,
\begin{multline}\label{eq:lower markov type}
\E\left[d_X(f(Z_{t_k}),f(Z_0))^p\right]\stackrel{\eqref{eq:compression
assumption}}{\ge} c^p \E\left[\rho\left(Z_{t_k},fZ_0\right)^{\alpha
p}\right]\\\stackrel{\alpha p\ge 1}{\ge}
c^p\left(\E\left[\rho(Z_{t_k},Z_0)\right]\right)^{\alpha
p}\stackrel{\eqref{eq:in A}}{\ge} \frac{c^pt_k^{\alpha\beta
p}}{4^{\alpha
p}}\left(\E_{\mu_{t_k}}\left[\rho\left(W_1^{\mu_{t_k}},e_\Gamma\right)^p\right]\right)^{\alpha\beta
p}.
\end{multline}
Combining~\eqref{eq:upper markov type} and~\eqref{eq:lower markov
type} we deduce that
\begin{eqnarray}\label{eq:punchline}
\left(\rho_0^pt_k\mu_{t_k}(\Gamma\setminus\{e_\Gamma\})\right)^{\alpha\beta
p-1}\stackrel{\eqref{eq:zero
prob}}{\le}\left(t_k\E_{\mu_{t_k}}\left[\rho\left(W_1^{\mu_{t_k}},e_\Gamma\right)^p\right]\right)^{\alpha\beta
p-1}\le \frac{4^{\alpha p}M^pC^p}{c^p}.
\end{eqnarray}
Taking $k\to \infty$ in~\eqref{eq:punchline} while using the
assumption~\eqref{eq:condition} we conclude that $\alpha \beta p\le
1$, as required.
\end{proof}

The following theorem is a variant of Theorem~\ref{thm:beta p} which
deals with equivariant embeddings of general groups (not necessarily
amenable) into uniformly smooth Banach spaces. Its proof is an
obvious modification of the proof of Theorem 2.1 in~\cite{NP07}: one
just has to notice that in that proof the i.i.d. group elements
$\{\sigma_k\}_{k=1}^\infty$ need not be uniformly distributed over a
symmetric generating set $S\subseteq \Gamma$---the argument goes
through identically if they are allowed to be distributed according
to any symmetric probability measure $\mu$ satisfying the
integrability condition~\eqref{eq:integrability}.

\begin{thm}\label{thm:sharp} Let $\Gamma$ be a  group and $\rho$ a left-invariant metric on
$\Gamma$ such that $|B_\rho(e_\Gamma,r)|<\infty$ for all $r\ge 0$.
Assume that $X$ is a Banach space whose modulus of uniform
smoothness has power-type $p\in [1,2]$. Then:
$$
\alpha_X^\#(\Gamma,\rho)\le \frac{1}{p\beta_p^*(\Gamma,\rho)}.
$$
\end{thm}

By the results of Section~\ref{sec:equivariant}
Theorem~\ref{thm:sharp} implies Theorem~\ref{thm:beta p} when $X$ is
a Banach space whose modulus of uniform smoothness has power-type
$p$ rather than a general metric space with Markov type $p$. Note
that the former assumption implies the latter assumption as shown
in~\cite{NPSS06}.

\section{Stable walks and the $L_p$ compression of $\Z\bwr
\Z$}\label{sec:Lp comp}

This section is devoted to the proof of the following theorem:

\begin{thm}\label{thm:betap ZZ}
For every $p\in(1,2)$ we have
$$
\beta_p^*(\Z\bwr \Z)=\frac{2p-1}{p^2}.
$$
\end{thm}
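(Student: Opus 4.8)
The plan is to prove the two inequalities $\beta_p^*(\Z\bwr\Z)\ge \frac{2p-1}{p^2}$ and $\beta_p^*(\Z\bwr\Z)\le \frac{2p-1}{p^2}$ separately. The lower bound is the main construction: I will exhibit a sequence of symmetric probability measures $\{\mu_t\}$ on $\Z\bwr\Z$, adapted to the time horizon $t$, for which the ratio in \eqref{eq:def betap} approaches $\frac{2p-1}{p^2}$. The natural choice is to induce a walk on $\Z\bwr\Z$ from a heavy-tailed walk on the base $\Z$: fix $q\in(p,2)$ and let the lamplighter make increments that are a truncated, discretized $q$-stable random variable (cut off at a level depending on $t$), while at each visited site the lamp value receives an independent $\pm 1$ increment. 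The point of using $q$-stable steps with $q>p$ is the usual stable-walk trade-off: after $t$ steps the lamplighter has explored a range of order $t^{1/q}$, but because the increments have a $p$-th moment that is only mildly divergent (controlled by the truncation), the quantity $\E_{\mu_t}[\rho(W_1^{\mu_t},e)^p]$ stays essentially bounded, or grows only slowly in $t$. Meanwhile $\rho(W_t^{\mu_t},e)\approx \TSP(\supp f_t;0,x_t)+\|f_t\|_1$, and by a covering/Jones-type estimate the traveling salesman cost of the set of sites visited by a $q$-stable walk on $\Z$ of length $t$ is of order (the range) $\asymp t^{1/q}$ plus lower-order terms, while $\|f_t\|_1$ is of order $t$ divided by the number of distinct sites, i.e. $\asymp t^{1-1/q}$ (each of $\asymp t^{1/q}$ sites is visited $\asymp t^{1-1/q}$ times, giving a local lamp value of order $\sqrt{t^{1-1/q}}=t^{(1-1/q)/2}$ by the CLT, hence $\|f_t\|_1\asymp t^{1/q}\cdot t^{(1-1/q)/2}=t^{(1+1/q)/2}$). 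Optimizing the exponent $\beta$ obtained from $\log(\E\rho(W_t,e))/\log(t\,\E\rho(W_1,e)^p)$ over $q\in(p,2)$, and letting $q\downarrow p$, should land exactly on $\frac{2p-1}{p^2}$; I will need to choose the truncation scale so that the denominator $t\,\E_{\mu_t}[\rho(W_1^{\mu_t},e)^p]$ behaves like $t^{2-1/q}$ (or the appropriate power), which forces the $\frac{1}{q}$-th power of $t$ in the numerator to become $t^{(1/q)/(2-1/q)}$, and at $q=p$ this is $\frac{1}{2p-1}$ as a function of $t$, i.e.\ $\frac{p}{2p-1}$ as a function of the denominator raised appropriately — here the bookkeeping must be done carefully, matching the claimed target $\frac{2p-1}{p^2}$ via $\alpha_p^*=\frac{1}{p\beta_p^*}=\frac{p}{2p-1}$.

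For the upper bound $\beta_p^*(\Z\bwr\Z)\le\frac{2p-1}{p^2}$, I would argue that \emph{any} admissible sequence $\{\mu_t\}$ cannot beat this exponent. The key is the decomposition $\rho(W_t^{\mu_t},e)\asymp \TSP(\supp f_t;0,x_t)+\|f_t\|_1$ applied to the $\mu_t$-walk, combined with two moment estimates: first, $\|f_t\|_1\le$ (total number of lamp-switches performed), which is dominated by a sum of $t$ i.i.d.\ terms whose $p$-th moments are controlled by $\E_{\mu_t}[\rho(W_1^{\mu_t},e)^p]$; second, $\TSP(\supp f_t;0,x_t)$ is at most (roughly) the diameter of the range of the base-walk times a logarithmic covering factor, and again the range at time $t$ is, in $L_1$-norm, controlled by $t\cdot\E_{\mu_t}[\rho(W_1^{\mu_t},e)]$ while its spatial spread is smaller. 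Feeding these into a Hölder/interpolation argument of the same flavor as the proof that $\alpha_p^*(\Z\bwr\Z)\le\frac{p}{2p-1}$ sketched in the introduction — i.e. balancing the cost of a long lamplighter excursion (cheap TSP, large $\|f\|_1$) against a short one (expensive TSP per unit, small $\|f\|_1$) — yields that $\E_{\mu_t}[\rho(W_t^{\mu_t},e)]\lesssim \big(t\,\E_{\mu_t}[\rho(W_1^{\mu_t},e)^p]\big)^{(2p-1)/p^2}$ up to lower-order factors, which is exactly the required bound on the $\limsup$. This half can also be obtained by combining Theorem~\ref{thm:beta p} with the already-known upper bound $\alpha_p^*(\Z\bwr\Z)\le\frac{p}{2p-1}$, but since that bound is itself proved \emph{via} $\beta_p^*$ it is cleaner to do the direct walk estimate here.

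I expect the main obstacle to be the lower bound, specifically making the $q$-stable walk on $\Z$ genuinely usable: the continuous $q$-stable law must be replaced by a symmetric integer-valued measure on $\Z$ (hence on $\Z\bwr\Z$ via the generators $(\delta_{\pm1},e)$ and $(\mathbf{e},\pm\text{jump})$), truncated at a $t$-dependent scale $L_t$ so that the $p$-th moment integral $\int \rho^p\,d\mu_t$ converges for each fixed $t$, while still ensuring (i) $t\mu_t(\Gamma\setminus\{e\})\to\infty$ so that \eqref{eq:condition} holds, (ii) the truncation does not destroy the stable scaling of the range of $W_t^{\mu_t}$ over the relevant window — this needs a local CLT or a direct large-deviation estimate showing the walk "feels" the stable behavior up to time $t$ before the truncation bites — and (iii) the concentration estimate $\rho(W_t^{\mu_t},e)\ge t^{\beta}(\E\rho(W_1^{\mu_t},e)^p)^{\beta}$ holds not just in expectation but robustly (the definition only needs expectation, which helps). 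Getting the exponent to come out \emph{exactly} $\frac{2p-1}{p^2}$ rather than merely $\ge$ some function of $q$ requires optimizing over $q\in(p,2)$ and over the truncation exponent, and I would double-check the arithmetic by verifying consistency with the introduction's claim $\alpha_p^*(\Z\bwr\Z)=\max\{\frac{p}{2p-1},\frac23\}$ via the identity $\alpha_p^*=\frac1{p\beta_p^*}$ and with the known facts $\beta^*(\Z\bwr\Z)=\frac34$ and $\beta_p^*\ge\beta^*$ (at $p=1$ one recovers $\beta_1^*=1$, consistent with $\frac{2p-1}{p^2}\big|_{p=1}=1$; as $p\to2$ one gets $\frac34$, consistent with $\beta^*$).
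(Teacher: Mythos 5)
Your overall strategy for the lower bound (a walk on $\Z\bwr\Z$ driven by discretized $q$-stable increments with $q\in(p,2)$, then $q\downarrow p$) is the right one, but as set up it cannot reach $\frac{2p-1}{p^2}$: you let the \emph{lamps} receive independent $\pm 1$ increments, so a site with local time $T$ carries a lamp of magnitude only $\sim\sqrt{T}$ by the CLT, and your own bookkeeping then gives $\E\left[\rho\left(W_t,e\right)\right]\approx t^{(1+1/q)/2}$ against a denominator $\log\left(t\,\E\left[\rho(W_1,e)^p\right]\right)\approx\log t$ (the untruncated step has finite $p$-th moment since $q>p$), i.e.\ the exponent $\frac{q+1}{2q}\to\frac{p+1}{2p}$, which is \emph{strictly smaller} than $\frac{2p-1}{p^2}$ for $p\in(1,2)$ (the comparison reduces to $(p-1)(p-2)<0$); no choice of truncation can fix this, since inflating $\E[\rho(W_1,e)^p]$ only shrinks the ratio in \eqref{eq:def betap}. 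The missing idea is that the stable law must be used for the lamp increments as well: in the paper's construction each step adds $q$-stable-like amounts to the lamps at the old and new positions, so a site with local time $T\approx t^{1-1/q}$ carries a lamp of size $\sim T^{1/q}$, giving $\|f_t\|_1\gtrsim t^{1/q}\cdot t^{(q-1)/q^2}=t^{(2q-1)/q^2}$, which is exactly the needed exponent. Note also that no truncation and no $t$-dependence of the measure are needed here: a single fixed symmetric measure $\mu_q$ with tails $\approx n^{-(q+1)}$ has finite $p$-th moment, automatically satisfies \eqref{eq:condition}, and yields $\beta_p^*(\Z\bwr\Z)\ge\frac{2q-1}{q^2}$ for each $q>p$ (time-dependent measures are only needed later, for the zero section).

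For the upper bound your reasoning is based on a mix-up. Combining Theorem~\ref{thm:beta p} with known facts is not circular: what one uses is the \emph{embedding} lower bound $\alpha_p^*(\Z\bwr\Z)\ge\frac{p}{2p-1}$ (an explicit construction from~\cite{NP07}, proved independently of any $\beta$-parameter), together with $\alpha_p^*\le\frac{1}{p\,\beta_p^*}$ from Theorem~\ref{thm:beta p} (legitimate since $\Z\bwr\Z$ is amenable and $L_p$ has Markov type $p$), which immediately gives $\beta_p^*(\Z\bwr\Z)\le\frac{2p-1}{p^2}$; this is exactly how the paper proceeds. Your proposed substitute --- a direct estimate showing that \emph{every} admissible sequence $\{\mu_t\}$ satisfies $\E\left[\rho\left(W_t^{\mu_t},e\right)\right]\lesssim\left(t\,\E\left[\rho\left(W_1^{\mu_t},e\right)^p\right]\right)^{(2p-1)/p^2}$ --- is only sketched by a balancing heuristic and would require a genuinely sharp argument (it encodes the same information as the optimal embedding); as written it is not a proof, and you should replace it by the Theorem~\ref{thm:beta p} route.
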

 Note that since in~\cite{NP07} we proved that $\alpha_p^*(\Z\bwr
 \Z)\ge \frac{p}{2p-1}$, Theorem~\ref{thm:beta p} implies that
 $\beta_p^*(\Z\bwr \Z)\le \frac{2p-1}{p^2}$. Thus in order to prove
 Theorem~\ref{thm:betap ZZ} it suffices to show that  $\beta_p^*(\Z\bwr \Z)\ge
 \frac{2p-1}{p^2}$, which would also imply that $\alpha_p^*(\Z\bwr
 \Z)=\frac{p}{2p-1}$. In order to establish this lower bound on $\beta_p^*(\Z\bwr \Z)$ we
 will analyze certain symmetric random walks on $\Z\bwr
 \Z$ which arise from discrete approximations of $q$-stable random
 variables for some $q\in (p,2)$.

\subsection{Some general properties of symmetric walks on $\Z$}

Let $X$ be a $\Z$-valued symmetric random variable and let
$X_1,X_2,\ldots$ be i.i.d. copies of $X$. For each $n\in \N$ define
$S_n=X_1+\cdots+X_n$ (and set $S_0=0$). We also define $S_{[o,n]}$
to be the random set $\{S_0,\ldots,S_n\}$. We will record here for
future use some general properties of the walk $S_n$. These are
simple facts which appeared in various guises in the literate
(though we did not manage to pinpoint cleanly stated references for
them). We include this brief discussion for the sake of
completeness.

\begin{lem}
For $S_n$ as above we have
\begin{equation}\label{eq:size-range}
\E \left[|S_n|\right]\ge \frac14
\E\left[\left|S_{[0,n]}\right|\right].
\end{equation}
\end{lem}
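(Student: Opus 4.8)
The plan is to relate the expected displacement $\E[|S_n|]$ to the expected range $\E[|S_{[0,n]}|]$ by a last-exit / time-reversal decomposition, a standard trick for one-dimensional walks. First I would write $|S_{[0,n]}| = \sum_{x \in \Z} \1_{\{x \in S_{[0,n]}\}}$ and, to each site $x$ that is visited, associate the \emph{last} time $\ell_x \coloneqq \max\{k \le n : S_k = x\}$ at which the walk sits at $x$. The key observation is that if $x$ is visited, then from time $\ell_x$ onward the walk never returns to $x$; equivalently, the reversed increments $X_n, X_{n-1}, \ldots, X_{\ell_x+1}$ form a walk started at $x$ that stays strictly on one side's worth of ``never hitting $x$ again.'' Since the increments are i.i.d.\ and symmetric, time reversal shows that for each fixed $k$, $\Pr[\ell_x = k \text{ for some } x] $ is governed by the probability that a fresh walk of length $n-k$ avoids returning to its start.

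More concretely, the cleanest route is: $\E\big[|S_{[0,n]}|\big] = \sum_{k=0}^n \Pr[k \text{ is a last-exit time, i.e. } S_k \notin \{S_{k+1},\dots,S_n\}]$, because summing over $k$ of the indicator that $k$ is the last visit to $S_k$ counts each visited site exactly once. By reversing the increments after time $k$, $\Pr[S_k \notin \{S_{k+1},\dots,S_n\}] = \Pr[0 \notin \{S_1,\dots,S_{n-k}\}] =: q_{n-k}$, the probability the walk avoids its starting point through time $n-k$. So $\E[|S_{[0,n]}|] = \sum_{j=0}^n q_j$. On the other hand I would lower-bound $\E[|S_n|]$ by the same kind of quantity: since $S_n$ is symmetric and (for a symmetric walk on $\Z$) $\E[|S_n|] \ge \tfrac12 \sum_{j} \Pr[\text{something}]$ — here the natural inequality is $\E[|S_n|] \ge c \sum_{j=0}^n q_j$ via a Kesten-type bound, or more elementarily by noting that $|S_n| \ge \#\{\text{distinct running maxima up to } n\}/\text{const}$ combined with symmetry.

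Honestly the slickest version is this: let $R_n = \max_{0\le k \le n} S_k - \min_{0 \le k \le n} S_k$ be the range, so $|S_{[0,n]}| \le R_n + 1 \le 2R_n$ for $R_n \ge 1$ (and the bound is trivial when $R_n = 0$). It then suffices to show $\E[|S_n|] \ge \tfrac18 \E[R_n]$ (absorbing a factor $2$). Now $R_n = \overline{M}_n + \underline{M}_n$ where $\overline{M}_n = \max_k S_k \ge 0$ and $\underline{M}_n = -\min_k S_k \ge 0$; by symmetry $\E[\overline M_n] = \E[\underline M_n]$, so $\E[R_n] = 2\E[\overline M_n]$. Finally, by the reflection-type identity for the maximum of a mean-zero walk (or just $\E|S_n| \ge \E[\,\overline M_n\,] \cdot (\text{const})$ via the fact that $S_n$ conditioned to have attained a high maximum is still of comparable order — this is where Lévy's or Kesten's symmetrization inequality, $\Pr[\overline M_n \ge t] \le 2\Pr[S_n \ge t]$, does the work), one gets $\E[\overline M_n] \le 2\E[S_n^+] = \E|S_n|$. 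Chaining these gives $\E|S_n| \ge \E[\overline M_n] = \tfrac12 \E[R_n] \ge \tfrac14 \E[|S_{[0,n]}|]$ modulo the $R_n\le 2|S_{[0,n]}|$ slack, which is exactly $\tfrac14$ with a tiny bit of care on the small-$R_n$ case.

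The main obstacle is making the last step — passing from the expected maximum $\E[\overline M_n]$ to $\E|S_n|$ — completely rigorous and with the clean constant $\tfrac14$. The tool is Lévy's inequality $\Pr[\max_{k\le n} S_k \ge t] \le 2\Pr[S_n \ge t]$, valid because the increments are i.i.d.\ and symmetric; integrating over $t \ge 0$ yields $\E[\overline M_n] \le 2\E[S_n^+] = \E|S_n|$ (using symmetry of $S_n$ in the last equality). Combined with $\E[|S_{[0,n]}|] \le \E[R_n]+1 \le 2\E[\overline M_n]$ when the walk is non-degenerate (and the trivial case otherwise), this closes the argument; I would expect the write-up to be only a few lines once Lévy's inequality is invoked.
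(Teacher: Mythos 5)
Your final (``slickest'') argument is correct and reaches the bound by a somewhat different route than the paper. The paper argues two\-/sidedly and from scratch: for each $R$ it stops the walk at $\tau=\min\{t:\ |S_t|\ge R\}$, uses symmetry of the post-$\tau$ increments to get $\Pr\left[|S_n|\ge R\right]\ge \frac12\Pr[\tau\le n]$, observes that $\left|S_{[0,n]}\right|\ge 2R$ forces $|S_k|\ge R$ for some $k$ (a pigeonhole among distinct integers), and then sums over $R$. You instead bound the number of visited sites by $\max_{k\le n}S_k-\min_{k\le n}S_k+1$, reduce to the one-sided maximum via $\E\left[\max_{k\le n} S_k\right]=\E\left[-\min_{k\le n} S_k\right]$, and quote L\'evy's maximal inequality $\Pr\left[\max_{k\le n}S_k\ge t\right]\le 2\Pr\left[S_n\ge t\right]$, which integrates to $\E\left[\max_{k\le n}S_k\right]\le 2\E\left[S_n^+\right]=\E\left[|S_n|\right]$. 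Since L\'evy's inequality is itself proved by exactly the reflection-at-a-first-passage-time argument the paper writes out, the underlying mechanism is the same; your packaging is shorter because it outsources that step to a classical inequality, and it even yields a slightly better leading constant ($\frac12$ in place of $\frac14$), while the paper's version is self-contained (which is its stated aim in this section) and never needs to pass through the max and min.

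The one point to be honest about is the constant you promise. Pointwise $\left|S_{[0,n]}\right|\le \max_{k\le n}S_k-\min_{k\le n}S_k+1$, so your chain gives $\E\left[|S_n|\right]\ge\frac12\E\left[\left|S_{[0,n]}\right|\right]-\frac12$, or, treating the flat event separately, $\E\left[|S_n|\right]\ge\frac14\left(\E\left[\left|S_{[0,n]}\right|\right]-\Pr\left[\max_{k\le n}S_k=\min_{k\le n}S_k\right]\right)$; the additive term is not removable by ``a tiny bit of care,'' since the literal inequality with constant $\frac14$ already fails for very lazy walks at small $n$ (e.g.\ $n=1$ and $\Pr[X=0]$ close to $1$). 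This is not a gap relative to the paper: its own layer-cake sums start at $R=0$ and therefore also establish the estimate only up to an additive $O(1)$, which is all that is used downstream, where the lemma is applied with $\E\left[\left|S_{[0,n]}\right|\right]\gtrsim n^{1/q}\to\infty$.
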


\begin{proof} Fix $R\ge 0$ and denote $\tau\coloneqq \min\{t\ge 0:\
|S_t|\ge R$\}. Note the following inclusion of events:
$$
\left\{|S_n|\ge R\right\}\supseteq \left\{\tau \le n\ \wedge \
 \mathrm{\bf sign}\left(\sum_{k=\tau+1}^nX_k\right)=\mathrm{\bf
 sign}(S_\tau)\right\}.
$$
It follows that:
\begin{multline}\label{eq:size tau}
\Pr\left[|S_n|\ge R\right]\ge \sum_{m=0}^n \Pr\left[\tau=m\  \wedge
\
 \mathrm{\bf sign}\left(\sum_{k=m+1}^nX_k\right)=\mathrm{\bf
 sign}(S_m)\right]=\sum_{m=0}^n \Pr\left[\tau=m\right]\cdot
 \Pr\left[S_{n-m}\ge 0\right]\\\stackrel{(\star)}{\ge} \frac12 \sum_{m=0}^n
 \Pr\left[\tau=m\right]=\frac12 \Pr\left[\tau\le n\right],
\end{multline}
where in $(\star)$ we used the symmetry of $S_{n-m}$. Note that if
$\left|S_{[0,n]}\right|\ge 2R$ then one of the numbers
$\{|S_0|,\ldots,|S_{n}|\}$ must be at least $R$. Thus
\begin{equation}\label{eq:range tau}
\Pr \left[\tau\le n\right]\ge \Pr\left[\left|S_{[0,n]}\right|\ge
2R\right].
\end{equation}
It follows that:
\begin{multline*}
\E\left[|S_n|\right]=\sum_{R=0}^\infty \Pr\left[|S_n|\ge
R\right]\stackrel{\eqref{eq:size tau}\wedge\eqref{eq:range
tau}}{\ge} \frac12\sum_{R=0}^\infty
\Pr\left[\left|S_{[0,n]}\right|\ge 2R\right]\\\ge
\frac12\sum_{R=0}^\infty \frac{\Pr\left[\left|S_{[0,n]}\right|\ge
2R\right]+\Pr\left[\left|S_{[0,n]}\right|\ge 2R+1\right]}{2}=\frac14
\E\left[\left|S_{[0,n]}\right|\right],
\end{multline*}
as required.
\end{proof}

The proof of the following lemma is a slight variant of the argument
used to prove the first assertion of Lemma 6.3 in~\cite{NP07}.

\begin{lem}\label{lem:second moment}
Let $S_n$ be as above and denote $R_n\coloneqq \left|\left\{k\in
\{0,\ldots,n\}:\ S_k=0\right\}\right|$. Then
\begin{eqnarray}\label{eq:second moment} \Pr\left[R_n\ge \frac12
\sum_{\ell=0}^n \Pr[S_\ell=0]\right]\ge \frac18.
\end{eqnarray}
\end{lem}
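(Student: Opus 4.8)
The plan is to use a second-moment (Paley--Zygmund) argument on the return-count $R_n = \sum_{k=0}^n \1_{\{S_k=0\}}$. First I would compute the first moment exactly: $\E[R_n] = \sum_{k=0}^n \Pr[S_k=0] =: \mu_n$. The whole point is to control the second moment $\E[R_n^2]$ by $C\mu_n^2$ for an absolute constant $C$, and then Paley--Zygmund gives $\Pr[R_n \ge \frac12\mu_n] = \Pr[R_n \ge \frac12 \E[R_n]] \ge \frac{(1-\frac12)^2 (\E[R_n])^2}{\E[R_n^2]} \ge \frac{1}{4C}$; choosing the bookkeeping so that $C \le 2$ yields the constant $\frac18$.

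The key step is therefore the second-moment bound, and this is where the ``slight variant of the argument used to prove the first assertion of Lemma 6.3 in~\cite{NP07}'' comes in. Expand
$$
\E[R_n^2] = \sum_{j=0}^n\sum_{k=0}^n \Pr[S_j=0 \ \wedge\ S_k=0] = 2\sum_{0\le j\le k\le n}\Pr[S_j=0 \ \wedge\ S_k=0] - \mu_n.
$$
For $j\le k$, by the Markov property of the walk and translation invariance, $\Pr[S_j=0 \wedge S_k=0] = \Pr[S_j=0]\,\Pr[S_{k-j}=0]$ (here one uses that $S_k - S_j$ is an independent copy of $S_{k-j}$, so conditioned on $S_j=0$ the event $S_k=0$ is exactly $S_{k-j}=0$ in law). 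Hence
$$
\E[R_n^2] \le 2\sum_{0\le j\le k\le n}\Pr[S_j=0]\,\Pr[S_{k-j}=0] \le 2\left(\sum_{j=0}^n\Pr[S_j=0]\right)\left(\sum_{i=0}^n\Pr[S_i=0]\right) = 2\mu_n^2,
$$
where in the last inequality I reindex $i = k-j$ and bound each of the two nested sums by the full sum $\mu_n$. This gives $C=2$ exactly, and Paley--Zygmund then yields $\Pr[R_n \ge \frac12\mu_n] \ge \frac{(1/2)^2\mu_n^2}{2\mu_n^2} = \frac18$, as claimed.

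The main thing to be careful about — and the only genuine obstacle — is the conditioning identity $\Pr[S_j=0 \wedge S_k=0] = \Pr[S_j=0]\Pr[S_{k-j}=0]$: one must invoke the independence of the increments $X_{j+1},\dots,X_k$ from $S_j$ and note that $S_k - S_j \stackrel{d}{=} S_{k-j}$, so that on the event $S_j=0$ the condition $S_k=0$ is almost surely equivalent to $(S_k-S_j)=0$. Everything else — the first-moment computation, the reindexing, and the Paley--Zygmund inequality in the form $\Pr[Y \ge \theta\E Y] \ge (1-\theta)^2(\E Y)^2/\E[Y^2]$ with $\theta = \tfrac12$ — is routine. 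Note also the mild degenerate case $\mu_n = 0$ is impossible since $\Pr[S_0=0]=1$, so $\mu_n \ge 1$ and the event in question is non-trivial.
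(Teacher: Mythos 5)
Your argument is correct and is essentially the paper's proof: the same expansion of $\E[R_n^2]$ via the factorization $\Pr[S_j=0\wedge S_k=0]=\Pr[S_j=0]\Pr[S_{k-j}=0]$, the same bound $\E[R_n^2]\le 2(\E[R_n])^2$, and the same Paley--Zygmund (second moment) inequality with threshold $\tfrac12$ yielding the constant $\tfrac18$. The only cosmetic difference is bookkeeping: the paper keeps the diagonal term as $\E[R_n]$ and uses $\E[R_n]\ge 1$ to absorb it, while you subtract it and drop it, which is equivalent.
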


\begin{proof}
Since $R_n=\sum_{\ell=0}^n \1_{\{S_\ell=0\}}$ we have $\E[R_n]=
\sum_{\ell=0}^n \Pr[S_\ell=0]$ and:
\begin{multline*}
\E\left[R_n^2\right]=\sum_{\ell=0}^n
\Pr[S_\ell=0]+2\sum_{\substack{i,j\in \{0,\ldots,n\}\\i<
j}}\Pr\left[S_i=S_j=0\right]\\=\E\left[R_n\right]+2\sum
_{\substack{i,j\in \{0,\ldots,n\}\\i<
j}}\Pr\left[S_i=0\right]\cdot\Pr\left[S_{j-i}=0\right]\le
\E\left[R_n\right]+\left(\E\left[R_n\right]\right)^2\le
2\left(\E\left[R_n\right]\right)^2.
\end{multline*}
Since for every nonnegative random variable $Z$ we have
$\Pr\left[Z\ge \frac12\E[Z]\right]\ge \frac14
\frac{(\E[Z])^2}{\E[Z^2]}$ (which is an easy consequence of the
Cauchy-Schwartz inequality---see~\cite{PZ32,AS00}) we deduce that
$\Pr\left[R_n\ge \frac12\E[R_n]\right]\ge \frac18$, as required.
\end{proof}

The proof of the following lemma is a slight variant of the argument
used to prove the second assertion of Lemma 6.3 in~\cite{NP07}.

\begin{lem} For $S_n$ as above we have:
\begin{equation}\label{eq:range}
\E\left[\left|S_{[0,n]}\right|\right]\ge \frac{n+1}{2\sum_{\ell=0}^n
\Pr\left[S_\ell=0\right]}.
\end{equation}
\end{lem}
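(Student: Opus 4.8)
The plan is to relate the expected range $\E[|S_{[0,n]}|]$ to the expected number of returns to a point via a last-visit (or first-visit) decomposition, exactly as in the proof of the second assertion of Lemma 6.3 in~\cite{NP07}. The key identity is that for each site $x\in\Z$, the number of indices $k\in\{0,\dots,n\}$ with $S_k=x$ can be controlled, on average, by the return-probability sum $G_n\coloneqq\sum_{\ell=0}^n\Pr[S_\ell=0]$, and summing over $x$ recovers the total count $n+1$.

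\smallskip

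Concretely, first I would write $n+1=\sum_{k=0}^n 1=\sum_{x\in\Z}\left|\{k\in\{0,\dots,n\}:\ S_k=x\}\right|$, so that taking expectations gives
\begin{equation*}
n+1=\sum_{x\in\Z}\sum_{k=0}^n\Pr[S_k=x].
\end{equation*}
Next, for a fixed $x$ and fixed $k$ with $S_k=x$, consider the last time $\ell\le k$ at which $S_\ell=x$ — no, rather the cleaner route: decompose over the \emph{first} visit time to $x$. Writing $T_x\coloneqq\min\{\ell\ge 0:\ S_\ell=x\}$ (possibly $\infty$), for any $k$ we have $\Pr[S_k=x]=\sum_{j=0}^k\Pr[T_x=j]\,\Pr[S_{k-j}=0]$ by the strong Markov property and spatial homogeneity of the walk. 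Summing over $k\in\{0,\dots,n\}$ and bounding $\sum_{k=j}^n\Pr[S_{k-j}=0]\le\sum_{\ell=0}^n\Pr[S_\ell=0]=G_n$ yields
\begin{equation*}
\sum_{k=0}^n\Pr[S_k=x]\le G_n\sum_{j=0}^n\Pr[T_x=j]=G_n\,\Pr[T_x\le n]\le G_n\,\Pr\big[x\in S_{[0,n]}\big].
\end{equation*}
Summing this over $x\in\Z$ gives $n+1\le G_n\sum_{x\in\Z}\Pr[x\in S_{[0,n]}]=G_n\,\E\big[|S_{[0,n]}|\big]$, which is even slightly stronger than the stated bound (the factor $1/2$ is not needed, but the statement is of course still correct). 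If one prefers to stay closer to the phrasing of~\cite{NP07}, one can instead split the sum at the midpoint $k\le n/2$ versus $k>n/2$ and use monotonicity, which is where the constant $2$ naturally enters; I would present whichever matches the reference argument most transparently.

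\smallskip

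\textbf{Main obstacle.} There is essentially no analytic difficulty here — the only thing to be careful about is justifying the first-passage decomposition $\Pr[S_k=x]=\sum_{j\le k}\Pr[T_x=j]\Pr[S_{k-j}=0]$, which requires that the walk is a genuine random walk with i.i.d. increments (so the strong Markov property applies) and that $\Pr[S_m=0]$ is translation-invariant in the starting point; both hold here by construction. A secondary bookkeeping point is that $T_x$ may be infinite with positive probability, but since we only sum $\Pr[T_x=j]$ over finite $j$ this causes no trouble, and in fact is exactly what produces the factor $\Pr[x\in S_{[0,n]}]\le 1$ used in the final summation. So the proof is short: the interchange of the $\sum_x$ and $\sum_k$ summations (both nonnegative, so Tonelli applies) plus the homogeneous first-passage identity is all that is required.
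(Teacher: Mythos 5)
Your proof is correct, and it is organized differently from the paper's, although both run on the same engine: the first-visit decomposition, which shows that the expected number of visits to a site, given that it is visited at all, is at most $G_n=\sum_{\ell=0}^n\Pr[S_\ell=0]$. The paper fixes $k$, observes that on the event $\{|S_{[0,n]}|\le k\}$ all $n+1$ positions of the walk lie among the first (at most) $k$ distinct visited sites, bounds the expected occupation time of those sites by $kG_n$ via the hitting times $\tau_j$, applies Markov's inequality to get $\Pr\left[|S_{[0,n]}|\le k\right]\le \frac{k}{n+1}G_n$, and then sums the tail of the distribution of $|S_{[0,n]}|$; the factor $\frac12$ arises in that final tail summation (not from a midpoint split in $k$, as you speculated). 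You instead write $n+1=\sum_{x\in\Z}\sum_{k=0}^n\Pr[S_k=x]$, bound each inner sum by $G_n\,\Pr\left[x\in S_{[0,n]}\right]$ through the same homogeneous first-passage identity, and sum over $x$, which collapses the argument into a single Tonelli exchange and yields the sharper bound $\E\left[\left|S_{[0,n]}\right|\right]\ge (n+1)/G_n$ with no factor of $2$. Your route is shorter and gives a better constant; the paper's route has the incidental feature of controlling the whole distribution function of the range rather than only its mean, which is more information than the lemma needs.
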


\begin{proof} Fix $k\in \{1,\ldots,n+1\}$ and denote $\widetilde k\coloneqq \min\left\{k,\left|S_{[0,n]}\right|\right\}$.
Let $V_1,\ldots,V_{\widetilde k}$ be the first distinct $\widetilde
k$ integers that were visited by the walk $S_0,S_1,\ldots,S_n$. For
simplicity of notation we also set $V_j=n+1$ when $j\in\{\widetilde
k+1,\ldots,n\}$. Write
\begin{equation*}
\tau_j\coloneqq\left\{\begin{array}{ll}\min\{0\le \tau\le n:\
S_\tau=V_j\}& j\le \widetilde k,\\
n+1& j>\widetilde k.\end{array}\right. \end{equation*} Denote
$Y_k\coloneqq \left|\left\{0\le j\le n:\ S_j\in
\{V_1,\ldots,V_{\widetilde k}\}\right\}\right|$. Then
\begin{multline}\label{eq:like IMRN1}
\E\left[Y_k\right]=\sum_{j=1}^k\E\left[\left|\left\{0\le \ell\le n:\
S_\ell=V_j\right\}\right|\right]=
\sum_{j=1}^k\E\left[\sum_{\ell=0}^n\1_{\{S_\ell=V_j\}}\right]=\sum_{j=1}^k\E\left[\left.\sum_{\ell=\tau_j}^n
\Pr\left[S_\ell=S_{\tau_j}\right]\right|\tau_j\right]\\
= \sum_{j=1}^k\E\left[\left.\sum_{\ell=\tau_j}^n
\Pr\left[S_{\ell-\tau_j}=0\right]\right|\tau_j\right]\le
k\sum_{\ell=0}^n \Pr\left[S_\ell=0\right].
\end{multline}
Hence
\begin{equation}\label{eq:like IMRN2}
\Pr\left[\left|S_{[0,n]}\right|\le k \right]\le \Pr\left[Y_k\ge
n+1\right]\le \frac{\E[Y_k]}{n+1}\stackrel{\eqref{eq:like
IMRN1}}{\le} \frac{k}{n+1}\sum_{\ell=0}^n \Pr\left[S_\ell=0\right].
\end{equation}
It follows that if we denote $m=\frac{n+1}{\sum_{\ell=0}^n
\Pr\left[S_\ell=0\right]}$ then
\begin{multline*}
\E\left[\left|S_{[0,n]}\right|\right]=\sum_{k=1}^{n+1}\Pr\left[\left|S_{[0,n]}\right|\ge
k \right]=\sum_{k=1}^{n+1}\left(1-\Pr\left[\left|S_{[0,n]}\right|\le
k-1 \right]\right)\stackrel{\eqref{eq:like IMRN2}}{\ge}
\sum_{k=1}^{\lceil m\rceil}\left(1-\frac{k-1}{m}\right)\\=\lceil
m\rceil-\frac{\lceil m\rceil(\lceil m\rceil-1)}{2m}\ge\frac{\lceil
m\rceil}{2}\ge \frac{n+1}{2\sum_{\ell=0}^n
\Pr\left[S_\ell=0\right]},
\end{multline*}
as required.
\end{proof}

\subsection{An analysis of a particular discrete stable walk on
$\Z$}\label{sec:binomial}

In this section we will analyze a specific random walk on $\Z$ which
will be used in estimating $\beta_p^*(\Z\bwr \Z)$. Similar bounds
are known to hold in great generality for arbitrary walks which are
in the domain of attraction of $q$-stable random variables, and not
only for the walk presented below. Specifically, such general
results can be deduced from Gendenko's local central limit theorem
for convergence to stable laws (see Theorem 4.2.1 in~\cite{IL71}),
in combination with some estimates on such walks from~\cite{Fel66}
(see section IX.8, Theorem 1 there). Since for the purpose of
proving compression bounds all we need is to construct one such
walk, we opted for the sake of concreteness to present here a simple
self-contained proof of the required properties of a particular walk
which is perfectly suited for the purpose of our applications to
embedding theory.

In what follows fix $q\in (p,2)$. Define $a_{1}=a_{-1}=0$ and for
$n\in (\mathbb \N\setminus \{1\})\cup\{0\}$,
\begin{equation}\label{eq:def an}
a_{n}=a_{-n}=\frac{(-1)^n}{2q}\binom{q}{n}=\frac{(-1)^n}{2q}\cdot
\frac{q(q-1)\cdots(q-n+1)}{n!}.
\end{equation}
Note that since $q\in (1,2)$ the definition~\eqref{eq:def an}
implies that for $n\neq 1$ we have $a_n> 0$. Since we defined
$a_{\pm1}$ to be equal $0$ it follows that $\{a_n\}_{n\in
\Z}\subseteq [0,\infty)$. An application of Stirling's formula
implies that as $n\to \infty$ we have
\begin{equation}\label{eq:stirling}
a_n= \frac{1}{2q}\binom{n-q-1}{n}\approx \frac{1}{n^{q+1}},
\end{equation}
where the implicit constants depend only on $q$ (and are easily
estimated if so desired). Note in particular that since $q>p$,
\eqref{eq:stirling} implies that
\begin{equation}\label{eq:finite moment}
\sum_{n\in \Z} a_n |n|^p<\infty,
\end{equation}
and
\begin{equation}\label{eq:def charatcteristic}
\varphi(\theta)\coloneqq \sum_{n\in \Z} a_n e^{in\theta}
\end{equation}
converges uniformly on $[-\pi,\pi]$. Moreover it is easy to compute
$\varphi(\theta)$ explicitly:
\begin{multline}\label{eq:compute phi}
\varphi(\theta)=\frac{e^{i\theta}+e^{-i\theta}}{2}+\frac{1}{2q}
\sum_{n=0}^\infty(-1)^n\binom{q}{n}\left(e^{in\theta}+e^{-in\theta}\right)=\cos\theta
+\frac{\left(1-e^{i\theta}\right)^q+\left(1-e^{-i\theta}\right)^q}{2q}\\
=\cos\theta+
\frac{2^{q/2}}{q}(1-\cos\theta)^{q/2}\cos\left(\frac{q(\pi-\theta)}{2}\right)\in
\R.
\end{multline}
An immediate consequence of~\eqref{eq:compute phi} is that
$\sum_{n\in \Z} a_n=\varphi(0)=1$. Thus we can define a symmetric
random variable $X$ on $\Z$ by $\Pr[X=n]=a_n$. With this
notation~\eqref{eq:finite moment} becomes $\E |X|^p<\infty$. Another
corollary of the identity~\eqref{eq:compute phi} is that there exits
$\e=\e(q)\in (0,1)$ and $c=c(q)>0$ such that for every $\theta\in
[-\e,\e]$ we have $ \E \left[e^{i\theta X}\right]=\varphi(\theta)\in
\left[e^{-2c|\theta|^q},e^{-c|\theta|^q}\right]$. Note also that
since for every $\theta\neq 0$ we have $|\varphi(\theta)|<\sum_{n\in
\Z}a_n=1$ there exists some $\delta=\delta(q)\in (0,1)$ such that
for every $\theta\in [-\pi,-\e]\cup[\e,\pi]$ we have
$|\varphi(\theta)|\le 1-\delta$.

Now let $X_1,X_2,\dots$ be i.i.d. copies of $X$. Denote
$S_n=X_1+\cdots+X_n$. Then the above bounds imply that
\begin{multline*}
\Pr\left[S_n=0\right]=\frac{1}{2\pi}\int_{-\pi}^{\pi}\left(\E\left[
e^{i\theta S_n}\right]\right)d\theta=\frac{1}{2\pi}\int_{-\pi}^\pi
\varphi(\theta)^nd\theta\\\in\frac{1}{2\pi}\left[\int_{-\e}^{\e}e^{-2cn|\theta|^q}d\theta-
\int_{[-\pi,-\e]\cup[\e,\pi]}(1-\delta)^nd\theta,\int_{-\e}^{\e}e^{-cn|\theta|^q}d\theta+
\int_{[-\pi,-\e]\cup[\e,\pi]}(1-\delta)^nd\theta\right].
\end{multline*}
This implies that as $n\to \infty$ we have
\begin{eqnarray}\label{eq:return}
\Pr\left[S_n=0\right]\approx \frac{1}{n^{1/q}}.
\end{eqnarray}

Substituting~\eqref{eq:return} into~\eqref{eq:range} we see that
\begin{equation}\label{eq:stable range}
\E\left[\left|S_{[0,n]}\right|\right]=\E\left[\left|\{S_0,\ldots,S_n\}\right|\right]\gtrsim
n^{1/q}.
\end{equation}
In combination with~\eqref{eq:range} it follows that
\begin{equation}\label{eq:stable dist}
\E\left[|S_n|\right]\gtrsim n^{1/q}.
\end{equation}
Additionally, if we let $R_n$ be as in Lemma~\ref{lem:second moment}
(for the particular symmetric walk $S_n$ studied here) then by
plugging~\eqref{eq:return} into~\eqref{eq:second moment} we get the
bound
\begin{equation}\label{eq:stable R_n}
\E\left[R_n^{1/q}\right]\gtrsim n^{(q-1)/q^2}.
\end{equation}

\subsection{The induced walk on $\Z\bwr\Z$ and the lower bound on
$\beta_p^*(\Z\bwr\Z)$}\label{sec:copy walk}

In this section we will conclude the proof of Theorem~\ref{thm:betap
ZZ}. Modulo the previous preparatory sections, the argument below
closely follows the proof of Theorem 6.2 in~\cite{NP07}.

For every $n_1,n_2,n_3\in \Z$ define $f_{n_1,n_2}^{n_3}:\Z\to \Z$ by
$$
f_{n_1,n_2}^{n_3}(k)\coloneqq n_1\1_{\{0\}}+n_2\1_{\{n_3\}}=
\left\{\begin{array}{ll}n_1 &
\mathrm{if}\ k=0,\\
n_2&\mathrm{if}\ k=n_3\ \wedge\ n_3\neq 0,\\n_1+n_2 &\mathrm{if}\
k=0=n_3,\\0&\mathrm{otherwise.}\end{array}\right.
$$
Denote
\begin{equation}\label{eq:def x_ni}
x_{n_1,n_2,n_3}\coloneqq\left(f_{n_1,n_2}^{n_3},n_3\right)\in \Z\bwr
\Z.
\end{equation}
 To better understand the meaning of this group element,
note that for every $(g,\ell)\in \Z\bwr \Z$ we have
$(g,\ell)x_{n_1,n_2,n_3}=(h,\ell+n_3)$ where
$$
h(k)=\left\{\begin{array}{ll}g(k)+n_1 &
\mathrm{if}\ k=\ell,\\
g(k)+n_2&\mathrm{if}\ k=\ell+n_3\ \wedge \ n_3\neq 0,\\g(\ell)+n_1+n_2&\mathrm{if}\ k=\ell\ \wedge\ n_3=0,\\
g(k)&\mathrm{otherwise.}\end{array}\right.
$$
Thus if we let $\mu$ be the symmetric probability measure on $\Z\bwr
\Z$ given by $\mu(\{x_{n_1,n_2,n_3}\})=a_{n_1}a_{n_2}a_{n_3}$, where
$\{a_n\}_{n\in \Z}$ are the coefficients from
Section~\ref{sec:binomial}, then the walk $\{W_t^\mu\}_{t=0}^\infty$
can be described in words as follows: start at $(\0,0)$ and at each
step choose three i.i.d. numbers $n_1,n_2,n_3\in \Z$ distributed
according to the random variable $X$ from
Section~\ref{sec:binomial}. Add $n_1$ to the current location of the
lamplighter, move the lamplighter $n_3$ units and add $n_2$ to the
new location of the lamplighter.

Write $W_t^\mu=(f_t,m_t)$. By the above description $m_t$ has the
same distribution as the walk $S_t$ from Section~\ref{sec:binomial}.
Fix $n\in \N$ and for $m\in \Z$ denote $T_m\coloneqq
\left|\left\{t\in \{0,\ldots,n\}:\ m_t=m\right\}\right|$. The above
description of the walk $W_t^\mu$ ensures that conditioned on
$\{T_m\}_{m\in \Z}$ and on ``terminal point" $m_n$, if $k\in
\Z\setminus \{0,m_n\}$ then $f_n(m)$ has the same distribution as
$S_{2T_m}$, if $m\in \{0,m_n\}$ and $m_n\neq 0$ then $f_n(m)$ has
the same distribution as $S_{\max\{2T_m-1,0\}}$, and if $m\in
\{0,m_n\}$ and $m_n=0$ then $f_n(m)$ has the same distribution as
$S_{2T_m}$. Thus using~\eqref{eq:stable dist} we see that
\begin{equation}\label{eq:each site}
\E\left[|f_n(m)|\right]\gtrsim \E\left[T_m^{1/q}\right].
\end{equation}
Fix $m\in \Z$ and for $t\in \{0,\ldots,n\}$ define the event $A_t
\coloneqq \left\{m_t=m\ \wedge\  m\notin
\{0,\ldots,m_{\ell-1}\right\}$. Note that conditioned on $A_t$ the
random variable $T_m$ has the same distribution as $R_{T_m}$, where
$\{R_k\}_{k=0}^n$ is as in~\eqref{eq:stable R_n}. Hence,
\begin{equation}\label{eq:walk per site}
\E\left[T_m^{1/q}\right]\ge \sum_{t=0}^{\lfloor n/2\rfloor}
\Pr(A_t)\cdot\E\left[\left.T_m^{1/q}\right|A_t\right]\gtrsim
\sum_{t=0}^{\lfloor n/2\rfloor} \Pr(A_t)\cdot
n^{(q-1)/q^2}=n^{(q-1)/q^2}\Pr\left[m\in \{m_0,\ldots,m_{\lfloor
n/2\rfloor}\}\right].
\end{equation}
It follows that
\begin{multline}
\E\left[d_{\Z\bwr \Z}\left(W_n^\mu,(\0,0)\right)\right]\gtrsim
\sum_{m\in \Z}\E\left[|f_n(m)|\right]\stackrel{\eqref{eq:each
site}}{\gtrsim}\sum_{m\in
\Z}\E\left[T_m^{1/q}\right]\stackrel{\eqref{eq:walk per
site}}{\gtrsim}n^{(q-1)/q^2}\sum_{m\in \Z}\Pr\left[m\in
\{m_0,\ldots,m_{\lfloor
n/2\rfloor}\}\right]\\=n^{(q-1)/q^2}\E\left[\left|\{S_0,\ldots,S_{\lfloor
n/2\rfloor}\}\right|\right]\stackrel{\eqref{eq:stable
range}}{\gtrsim} n^{(q-1)/q^2}\cdot n^{1/q}=n^{(2q-1)/q^2}.
\end{multline}
On the other hand it follows from~\eqref{eq:finite moment} that
$\E\left[d_{\Z\bwr \Z}\left(W_1^\mu,(\0,0)\right)^p\right]<\infty$
so we deduce from the definition of $\beta_p^*(\Z\bwr \Z)$ that
$$
\beta_p^*(\Z\bwr \Z)\ge \frac{2q-1}{q^2}.
$$
Letting $q\to p^+$ we deduce Theorem~\ref{thm:betap ZZ}.\qed

\begin{remark}\label{rem:G wreath Z}
{\em The same argument as above actually shows that for every
finitely generated group $G$ and every $p\in (1,2]$ we have
\begin{equation}\label{eq:G wreath Z}
\beta_p^*(G\bwr\Z)\ge
\frac{1}{p}+\left(1-\frac{1}{p}\right)\beta_p^*(G).
\end{equation}
This implies Theorem~\ref{thm:betap ZZ} since the computations in
Section~\ref{sec:binomial} show that $\beta_p^*(\Z)\ge \frac{1}{p}$.
Note of course that due to Theorem~\ref{thm:beta p} we actually know
that $\beta_p^*(\Z)=\frac{1}{p}$. We also observe that if $H$ is a
finitely generated group whose growth is at least quadratic then
$\beta_p^*(G\bwr H)=1$. Indeed we have established the fact that
$\beta_p^*(G\bwr H)\le 1$ in~\eqref{eq:beta<1}, while the lower
bound follows from Theorem 6.1 in~\cite{NP07} which states that
$\beta^*(G\bwr H)=1$, combined with the obvious fact that
$\beta^*(G\bwr H)\le \beta_p^*(G\bwr H)$. \fin}
\end{remark}

\begin{remark}\label{rem:iterated wreath}{\em Define inductively
$\Z_{(1)}=\Z$ and $\Z_{k+1}=\Z_{(k)}\bwr \Z$. Then for $p\in (1,2]$
we have  $\beta_p^*(Z_{(1)})=\frac{1}{p}$ and~\eqref{eq:G wreath Z}
implies that $\beta_p^*(\Z_{(k+1)})\ge
\frac{1}{p}+\left(1-\frac{1}{p}\right)\beta_p^*(\Z_{(k)})$. It
follows by induction that for all $k\in \N$ we have
\begin{equation}\label{eq:iterated beta}\beta_p^*(\Z_{(k)})\ge 1-\left(1-\frac{1}{p}\right)^k.\end{equation} Note
that $\alpha_p^*(\Z_{(1)})=1$ and by~\cite{Tess06}
$\alpha_p^*(C_2\bwr \Z)=1$ (see also the different proof of this
fact in~\cite{NP07}). Thus Corollary~\ref{coro:Lp} implies that
$$
\alpha_p^*(\Z_{(k+1)})\ge
\frac{p\alpha_p^*(\Z_{(k)})}{p\alpha_p^*(\Z_{(k)})+p-1}.
$$
It follows by induction that
\begin{equation}\label{eq:iterated alpha}
\alpha_p^*(\Z_{(k)})\ge
\frac{1}{p\left(1-\left(1-\frac{1}{p}\right)^k\right)}.
\end{equation}
By combining~\eqref{eq:iterated beta} and~\eqref{eq:iterated alpha}
with Theorem~\ref{thm:beta p} we see that
$$
\alpha_p^*(\Z_{(k)})=
\frac{1}{p\left(1-\left(1-\frac{1}{p}\right)^k\right)}\quad\mathrm{and}\quad
\beta_p^*(\Z_{(k)})= 1-\left(1-\frac{1}{p}\right)^k.
$$
 For $p\in (2,\infty)$ the same reasoning (using the fact that $L_p$ has Markov type 2~\cite{NPSS06}) shows that
 $\alpha_p^*(\Z_{(k)})=\alpha_2^*(\Z_{(k)})$ and $\beta_p^*(\Z_{(k)})=\beta_2^*(\Z_{(k)})$.\fin}
\end{remark}

\section{A computation of $\beta_p^*\left((\Z\bwr \Z)_0,d_{\Z\bwr
\Z}\right)$}\label{sec:zero section walk}

The purpose of this section is to prove the following result:

\begin{thm}\label{thm:infinite groups}
Let $G,H$ be infinite groups generated by the finite symmetric sets
$S_G\subseteq G$ and $S_H\subseteq H$, respectively. Let $(G\bwr
H)_0=\{(f,x)\in G\bwr H:\ x=e_H\}$ be the zero section of $G\bwr H$.
Then for all $p\in [1,2]$ we have
\begin{equation}\label{eq:general zero}
\beta_p^*\left((G\bwr H)_0,d_{G\bwr H}\right)\ge \frac{2}{p+1}.
\end{equation}
\end{thm}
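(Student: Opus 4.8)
The plan is to construct, for each real $q>p$, one symmetric probability measure $\mu_q$ on $(G\bwr H)_0$ with finite $p$-th moment whose associated random walk escapes at the polynomial rate $t^{2/(q+1)}$, and then to apply the definition of $\beta_p^*$ to the constant sequence $\mu_t\equiv\mu_q$ and let $q\downarrow p$. Since $G$ and $H$ are infinite, for every $r\ge 1$ we may fix an element $g_r\in G$ with $d_G(e_G,g_r)=r$, and the sphere $S_r:=\{x\in H:\ d_H(e_H,x)=r\}$ is finite and nonempty. Put $a_r:=c_q\,r^{-(q+1)}$ with $c_q$ normalizing $\sum_{r\ge 1}a_r=1$; the hypothesis $q>p$ is exactly what makes $\sum_{r\ge1}a_r r^p<\infty$. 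Sample $\phi\sim\mu_q$ as follows: draw $R$ with $\Pr[R=r]=a_r$, then $x$ uniformly from $S_R$ and a sign $\varepsilon\in\{\pm1\}$ uniformly, and let $\phi=(\psi,e_H)\in(G\bwr H)_0$ where $\psi\colon H\to G$ takes the value $g_R^{\varepsilon}$ at $x$ and equals $e_G$ elsewhere. The decisive feature is that the \emph{size} $d_G(g_R^{\varepsilon},e_G)=R$ of the single lamp switched is coupled to the \emph{distance} $R=d_H(e_H,x)$ one must travel to reach it.

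By~\eqref{eq;distances in wreath}, $d_{G\bwr H}(\phi,e)=\TSP(\{x\};e_H,e_H)+d_G(g_R^{\varepsilon},e_G)=2R+R=3R$, whence $\E_{\mu_q}\big[d_{G\bwr H}(W_1^{\mu_q},e)^p\big]=3^p\sum_{r}a_r r^p<\infty$; moreover $\mu_q$ is symmetric (negating $\varepsilon$ inverts $\phi$ and preserves the law) and $\mu_q(\{e\})=0$, so the constant sequence satisfies~\eqref{eq:condition}. To bound $\E_{\mu_q}\big[d_{G\bwr H}(W_t^{\mu_q},e)\big]$ from below, write $W_t^{\mu_q}=(f_t,e_H)$, so $f_t$ is the pointwise product in $\bigoplus_H G$ of the $t$ increments; since each increment's support point lies on the sphere of radius equal to its $R$-parameter, $f_t(x)=g_{d_H(e_H,x)}^{\,\sigma_x}$, where $\sigma_x$ is the signed number of steps hitting $x$. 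Discarding the nonnegative $\TSP$ term and all multiply-hit sites,
\begin{equation*}
d_{G\bwr H}(W_t^{\mu_q},e)\;\ge\;\sum_{x\in H}d_G(f_t(x),e_G)\;\ge\;\sum_{x\ \text{hit exactly once}} d_G\big(g_{d_H(e_H,x)}^{\pm1},e_G\big)\;=\;\sum_{x\ \text{hit exactly once}} d_H(e_H,x).
\end{equation*}
Taking expectations: with $p_x:=a_{d_H(e_H,x)}/|S_{d_H(e_H,x)}|$ the chance a given step lands at $x$, the number of hits at $x$ is $\mathrm{Binomial}(t,p_x)$, so $\Pr[x\ \text{hit exactly once}]=t p_x(1-p_x)^{t-1}$. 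Keeping only $x$ with $\lceil t^{1/(q+1)}\rceil\le d_H(e_H,x)\le \tfrac12 t^{1/q}$ — a nonempty range of scales for large $t$ since $1/(q+1)<1/q$ — one has $t p_x\le t a_r\le c_q$, hence $(1-p_x)^{t-1}\ge e^{-2c_q}$, and $\sum_{x\in S_r}p_x=a_r$ gives
\begin{equation*}
\E_{\mu_q}\big[d_{G\bwr H}(W_t^{\mu_q},e)\big]\;\gtrsim\; t\sum_{r=\lceil t^{1/(q+1)}\rceil}^{\lfloor t^{1/q}/2\rfloor} r\,a_r\;\asymp\; t\sum_{r\ge t^{1/(q+1)}} r^{-q}\;\asymp\; t\cdot\big(t^{1/(q+1)}\big)^{1-q}\;=\;t^{2/(q+1)},
\end{equation*}
where the penultimate step uses $q>1$ (automatic, as $q>p\ge1$) so that the tail of the series is governed by its first term, together with the fact that the summation range spans a factor $\ge2$ once $t$ is large.

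Feeding the two displays into $\beta_p^*(\{\mu_q\}_{t\ge1},d_{G\bwr H})=\limsup_{t}\frac{\log\E_{\mu_q}[d_{G\bwr H}(W_t^{\mu_q},e)]}{\log\big(t\,\E_{\mu_q}[d_{G\bwr H}(W_1^{\mu_q},e)^p]\big)}$ yields $\beta_p^*\big((G\bwr H)_0,d_{G\bwr H}\big)\ge \frac{2}{q+1}$ for every $q>p$, and letting $q\downarrow p$ gives the theorem. The one place where real thought is required is the choice of $\mu_q$: a ``unit-lamp'' walk that merely moves a single $\delta$-lamp to a heavy-tailed location only reaches $\beta_p^*\ge 1/q$, because the $G$-mass accreted at a typical site is dwarfed by the diameter (of order $t^{1/q}$) of the set of visited sites; tying the lamp height to the travel distance is precisely what lets the $\asymp t r^{-(q+1)}$ sites visited \emph{once} at scale $r$, weighted by $r$ and summed, dominate that diameter. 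Note that the argument uses neither $q<2$ nor any escape-rate information for random walks on $G$ or $H$ — only that both groups are infinite — and that the traveling-salesman term is discarded outright.
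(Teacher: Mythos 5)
Your proof is correct, but it takes a different route from the paper's. The paper also works with lamps whose height is comparable to the distance one must travel to reach them and also counts lamps altered exactly once, but it implements this with a \emph{time-adapted} sequence of finitely supported measures: for the time horizon $n$ it takes $\mu_n$ uniform on the $4n$ elements $(f_{k,\e,\delta},e_H)$, where $f_{k,\e,\delta}$ places $g_n^{\e}$ at $e_H$ and $g_n^{\delta}$ at $h_k$ for $|k|\le n/2$, so that one step has cost at most $3n$ pointwise, while each of the $n$ coordinates $h_k$ is touched exactly once in $n$ steps with probability $(1-1/n)^{n-1}>1/3$ and then contributes $n$; this gives $\E\,d_{G\bwr H}(W_n^{\mu_n},e)\ge n^2/3$ and hence the exponent $2/(p+1)$ directly from one sequence, with no tail estimates and no limiting argument. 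You instead fix $q>p$ and use a single heavy-tailed measure in which the lamp height is tied exactly to the radius $R$ of the site switched, with $\Pr[R=r]\asymp r^{-(q+1)}$, run a multi-scale count of singly hit sites over radii $r\gtrsim t^{1/(q+1)}$, obtain escape rate $t^{2/(q+1)}$, and let $q\downarrow p$; this is very much in the spirit of the stable-walk construction the paper uses in Section~\ref{sec:Lp comp} for $\Z\bwr\Z$ rather than of its Section~\ref{sec:zero section walk} argument. Each approach buys something: the paper's is shorter and shows how the time-dependence allowed in the definition \eqref{eq:def betap} can be exploited (indeed the authors highlight that feature), whereas your argument shows that for this particular lower bound time-dependent measures are not actually needed --- a constant-in-time sequence suffices for each $q>p$, at the price of the binomial/tail estimates and the limit $q\downarrow p$. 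All the steps you need do check out: symmetry of $\mu_q$ under $\e\mapsto-\e$, finiteness of the $p$-th moment precisely because $q>p$, the conditions \eqref{eq:condition} for the constant sequence, the lower bound obtained by discarding the TSP term in \eqref{eq;distances in wreath} and keeping only singly hit sites (where the lamp is $g_r^{\pm1}$ and contributes exactly $r$, so finite order of $g_r$ causes no trouble), and the restriction $r\ge t^{1/(q+1)}$ which guarantees $tp_x\le c_q$ and hence $(1-p_x)^{t-1}\gtrsim 1$.
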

Specializing to the case $G=H=\Z$ we can apply Theorem~\ref{thm:beta
p} when $\rho$ is the metric induced from $\Z\bwr \Z$ on the
amenable group $(Z\bwr \Z)_0$ to deduce that
\begin{equation}\label{eq:use theorems zero}
\frac{p+1}{2p}\ge \frac{1}{p\beta_p^*\left((\Z\bwr\Z)_0,d_{\Z\bwr
\Z}\right)}\ge \alpha_p^*\left((\Z\bwr\Z)_0,d_{\Z\bwr
\Z}\right)\stackrel{\mathrm{(Thm.~\ref{thm:zero
section})}}{=}\frac{p+1}{2p}.
\end{equation}
Thus in particular there is equality in~\eqref{eq:general zero} when
$G=H=\Z$.

\begin{proof}[Proof of Theorem~\ref{thm:infinite groups}] For every
$k\in \N$ let $g_k\in G$ and $h_k\in H$ be elements satisfying
$d_G(g_k,e_G)=k$ and $d_H(h_k,e_H)=k$. Such elements exists since
$G,H$ are assumed to be infinite. We shall write below
$h_k^{-1}=h_{-k}$. Fix an even integer $n\in \N$. For every $k\in
[1,n/2]\cup[-n/2,-1]$ and $\e,\delta\in \{-1,1\}$ define
$f_{k,\e,\delta}:H\to G$ by
$$
f_{k,\e,\delta}(x)\coloneqq \left\{\begin{array}{ll}g_n^\e&
\mathrm{if}\ x=e_H,\\g_n^\delta& \mathrm{if}\ x=h_k,\\e_G&
\mathrm{otherwise}.\end{array}\right.
$$
Let $\mu_n$ be the symmetric measure on $(G\bwr H)_0$ which is
uniformly distributed on the $4n$ elements
$$
\left\{(f_{k,\e,\delta},e_H):\ k\in [1,n/2]\cup[-n/2,-1],\
\e,\delta\in \{-1,1\}\right\}\subseteq (G\bwr H)_0.
$$
Then the following point-wise inequality holds true:
\begin{eqnarray}\label{eq:pointwise}
0<d_{G\bwr H}\left(W_1^{\mu_n},e_{G\bwr H}\right)\le 3n.
\end{eqnarray}
It follows in particular that the conditions in~\eqref{eq:condition}
hold true for the sequence $\{\mu_n\}_{n=1}^\infty$. Moreover, for
each $k\in [1,n/2]\cup[-n/2,-1]$ the probability that in exactly one
of the first $n$ steps of the walk
$\left\{W_t^{\mu_n}\right\}_{t=0}^\infty$ the $h_k$ coordinate was
altered is $\left(1-\frac{1}{n}\right)^{n-1}> \frac13$. Therefore
the expected number of of coordinates $h_k$ that were altered
exactly once is greater than $n/3$. Each such coordinate contributes
$n$ to the distance between $W_n^{\mu_n}$ and $e_{G\bwr H}$. Hence
\begin{equation}\label{n-step}
\E_{\mu_n}\left[d_{G\bwr H}\left(W_n^{\mu_n},e_{G\bwr
H}\right)\right]\ge \frac{n^2}{3}.
\end{equation}
It follows from the definition~\eqref{eq:def betap} that
$$
\beta_p^*\left((G\bwr H)_0,d_{G\bwr H}\right)\ge
\beta_p^*\left(\left\{\mu_n\right\}_{n=1}^\infty,d_{G\bwr
H}\right)\stackrel{\eqref{eq:pointwise}\wedge\eqref{n-step}}{\ge}\limsup_{n\to\infty}
\frac{\log(n^2/3)}{\log(3^pn^{1+p})}=\frac{2}{p+1},
$$
as required.
\end{proof}

\section{An application to the Lipschitz extension
problem}\label{sec:extension}

The purpose of this section is to prove the following theorem:

\begin{thm}\label{thm:no extend}
There exists a Lipschitz function $F:(\Z\bwr \Z)_0\to L_2$ which
cannot be extended to a Lipschitz function from $\Z\bwr \Z$ to
$L_2$.
\end{thm}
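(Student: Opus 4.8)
The plan is to argue by contradiction, exploiting the mismatch between the two compression exponents $\alpha^*\big((\Z\bwr\Z)_0,d_{\Z\bwr\Z}\big)=\tfrac34$ and $\alpha^*(\Z\bwr\Z)=\tfrac23$. Fix $\e>0$ small and, using Theorem~\ref{thm:zero section} with $p=2$, pick a $1$-Lipschitz map $\psi:(\Z\bwr\Z)_0\to L_2$ with $\|\psi(u)-\psi(v)\|_2\gtrsim d_{\Z\bwr\Z}(u,v)^{3/4-\e}$. Suppose toward a contradiction that $\psi$ extends to a Lipschitz $\Psi:\Z\bwr\Z\to L_2$. The heuristic (already sketched in the introduction) is: run the canonical random walk $W_t=(f_t,x_t)$ on $\Z\bwr\Z$; then $|x_t|\lesssim\sqrt t$ with good probability while $d_{\Z\bwr\Z}(W_t,e)\gtrsim t^{3/4}$; projecting $W_t$ onto its nearest point in $(\Z\bwr\Z)_0$ moves $\Psi$ by $\lesssim\sqrt t$ (Lipschitz), and then the compression lower bound for $\psi$ forces $\sqrt t\gtrsim t^{(3/4)(3/4)}=t^{9/16}$, a contradiction. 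The obstacle, as noted, is that Markov type $2$ of $L_2$ applies only to stationary reversible chains, and the walk from the identity is neither.

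To make this rigorous I would, for each large $t$, build a \emph{finite} stationary reversible Markov chain that lives in a thin tubular neighborhood of $(\Z\bwr\Z)_0$ and whose one-step and $t$-step displacements reproduce the heuristic. Concretely: choose a box of lamp-positions, say $[-L,L]\cap\Z$ with $L\asymp\sqrt t$, and a bound $K\asymp t^{1/4}$ on lamp heights, and let $V\subseteq\Z\bwr\Z$ be the set of $(f,x)$ with $\supp(f)\subseteq[-L,L]$, $\|f\|_\infty\le K$, and $|x|\le$ a small constant times $L$. Run the canonical walk on $\Z\bwr\Z$ but reflect (or freeze) at the boundary of $V$, started from the \emph{uniform} distribution on $V$; this chain is stationary and reversible, so inequality~\eqref{eq:defMarkov} applies with the Markov type $2$ constant of $L_2$. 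The one-step distance is $O(1)$, so $\E[\|\Psi(Z_t)-\Psi(Z_0)\|_2^2]\le M_2(L_2)^2\,t\cdot\|\Psi\|_{\Lip}^2\cdot O(1)\lesssim t$. For the lower bound, a standard random-walk estimate shows that with probability bounded below the chain (before hitting the boundary, which happens with small probability on the relevant time scale by the choice $L,K$) behaves like the free walk, so $x_t$ is within $O(\sqrt t)$ of $0$ and $\|f_t\|_1\gtrsim t^{3/4}$, hence $d_{\Z\bwr\Z}(Z_t,e)\gtrsim t^{3/4}$. Moving $Z_t$ to its nearest lamplighter-at-$0$ point $Z_t'\in(\Z\bwr\Z)_0$ costs at most $|x_t|\lesssim\sqrt t$ in the metric and hence at most $\lesssim\sqrt t$ for $\Psi$; also $Z_0$ is already (or is within $O(1)$ of) the zero section on the event that $x_0$ is small. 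Then $d_{\Z\bwr\Z}(Z_t',e)\gtrsim t^{3/4}$, and applying the compression bound for $\psi=\Psi|_{(\Z\bwr\Z)_0}$ gives $\|\Psi(Z_t')-\Psi(Z_0')\|_2\gtrsim\big(t^{3/4}\big)^{3/4-\e}=t^{9/16-3\e/4}$, whence $\E[\|\Psi(Z_t)-\Psi(Z_0)\|_2^2]\gtrsim t^{9/8-O(\e)}$. Comparing with the Markov type upper bound $\lesssim t$ yields $t^{9/8-O(\e)}\lesssim t$, false for large $t$ once $\e$ is small enough.

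The step I expect to be the main obstacle is the \emph{quantitative control of the truncated walk on $V$}: one must choose $L\asymp t^{1/2}$ and $K\asymp t^{1/4}$ (and the $x$-window) so that (i) the boundary of $V$ is not hit before time $t$ except with $o(1)$ probability --- this needs the fact that for the free walk $\max_{s\le t}|x_s|\lesssim\sqrt{t\log t}$ and $\max_j|f_t(j)|\lesssim t^{1/4}$ with high probability, so a slightly enlarged box suffices --- and (ii) conditioned on not hitting the boundary and on $x_0,x_t$ being small, the lower bound $\|f_t\|_1\gtrsim t^{3/4}$ still holds with constant probability, which follows from the analysis of the lamplighter walk (the range of the $\Z$-walk is $\asymp\sqrt t$ and each visited site accumulates lamp-height of order $t^{1/4}$ in $\ell_1$, cf.\ the estimates behind $\beta^*(\Z\bwr\Z)=\tfrac34$). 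Assembling these tail estimates carefully, and verifying that passing from $Z_t$ to its nearest point in $(\Z\bwr\Z)_0$ does not destroy the distance lower bound, is the technical heart; everything else is the bookkeeping of chaining the two inequalities. I would present the details in the body of Section~\ref{sec:extension}.
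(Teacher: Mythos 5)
Your proposal is correct and is essentially the paper's own argument in Section~\ref{sec:extension}: assume an extension exists, run a frozen (hence stationary and reversible) walk started uniformly on a finite box lying in a narrow tube around $(\Z\bwr\Z)_0$ (Lemma~\ref{lem:tubular}), and play the Markov type $2$ inequality against the $(3-\e)/4$ compression of the map from Theorem~\ref{thm:zero section}, using {\`E}rschler's bound $\E\left[d_{\Z\bwr\Z}(W_n,e)\right]\gtrsim n^{3/4}$ and Doob's inequality to control the lamplighter's position. The only real difference is that the paper makes the box generous in the lamp directions (support in $[-n,n]$, lamp values at most $n^2$) so that, once the position is controlled, the frozen walk deterministically coincides with the free walk started from the inner set $V_n$ and no maximal local-time or maximal lamp-height estimates are needed, and it obtains the displacement lower bound by left-invariance, $d_{\Z\bwr\Z}(Z_n,Z_0)=d_{\Z\bwr\Z}(W_n,e)$ on the good event, rather than through $d_{\Z\bwr\Z}(Z_t,e)$ as in your sketch.
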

The key step in the proof of Theorem~\ref{thm:no extend} is the use
of the function constructed in Theorem~\ref{thm:zero section}. The
other fact that we will need is Lemma~\ref{lem:tubular} below.
Recall that a Markov chain $\{Z_t\}_{t=0}^\infty$ is called a
symmetric Markov chain on $\Z\bwr \Z$ if there exists an $N$-point
subset $\{z_1,\ldots,z_N\}\subseteq \Z\bwr \Z$ and an $N\times N$
symmetric stochastic matrix $A=(a_{ij})$ such that
$\Pr[Z_0=z_i]=\frac{1}{N}$ for all $i\in \{1,\ldots,N\}$ and for all
$i,j\in \{1,\ldots,N\}$ and $t\in \N$ we have
$\Pr[Z_{t+1}=x_j|Z_t=z_i]=a_{ij}$.

The following lemma asserts that there is a fast-diverging symmetric
Markov chain on $\Z\bwr\Z$ which remains within a relatively narrow
tubular neighborhood around the zero section $(\Z\bwr \Z)_0$.

\begin{lem}\label{lem:tubular} For every $\e>0$ there exists an integer $n_0(\e)\in N$ such that for all $n\ge n_0(\e)$
there is a symmetric Markov chain $\{Z_t\}_{t=0}^\infty$ on $\Z\bwr
\Z$ which satisfies the following conditions:
\begin{enumerate}
\item $d_{\Z\bwr \Z}(Z_1,Z_0)\le 4$ (point-wise),
\item $d_{\Z\bwr\Z}\left(Z_t,(\Z\bwr \Z)_0\right)\le 2n^{(1+\e)/2}$
for all $t\ge 0$ (point-wise),
\item $\E\left[d_{\Z\bwr\Z}\left(Z_n,Z_0\right)\right]\gtrsim
n^{3/4}.$
\end{enumerate}
\end{lem}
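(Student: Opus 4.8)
The plan is to build the Markov chain $\{Z_t\}$ directly out of the stable-walk construction from Section~\ref{sec:copy walk}, truncated to keep the lamplighter near the origin, and to run it on a finite symmetric state space so that Markov type can legitimately be applied. Concretely, I would first recall the measure $\mu$ on $\Z\bwr\Z$ from Section~\ref{sec:copy walk}, built from the discrete $q$-stable coefficients $\{a_n\}$ of Section~\ref{sec:binomial} (with $q$ close to $2$, say $q=2-\e'$ for suitable $\e'=\e'(\e)$), whose steps add $n_1$ at the current lamplighter site, move the lamplighter $n_3$ units, and add $n_2$ at the new site. The associated walk $W_t^\mu=(f_t,m_t)$ has $m_t\sim S_t$, the stable walk on $\Z$, so after $n$ steps $|m_n|\lesssim n^{1/q}\le n^{(1+\e)/2}$ with high probability, while $\E[d_{\Z\bwr\Z}(W_n^\mu,(\0,0))]\gtrsim n^{(2q-1)/q^2}$, which exceeds $n^{3/4}$ once $q$ is close enough to $2$. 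The obstacle is that unbounded steps and an unbounded configuration space prevent us from getting a bona fide symmetric \emph{finite} Markov chain with bounded one-step increments; both must be repaired.

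Second, I would truncate. Replace each coordinate of the step (the three i.i.d.\ copies of $X$, and hence each increment) by a bounded random variable: condition on $|n_1|,|n_2|,|n_3|\le C$ for a large constant $C$, which costs a constant factor in all the range/return estimates of Section~\ref{sec:binomial} (these are all proved via the characteristic function $\varphi$, and truncating $X$ changes $\varphi$ only slightly near $0$, preserving the $e^{-\Theta(|\theta|^q)}$ behaviour and hence \eqref{eq:return}, \eqref{eq:stable range}, \eqref{eq:stable dist}). After truncation the one-step distance is $\le 3C\eqqcolon 4$ if we choose $C$ appropriately (or simply state the bound with the constant that arises, matching condition~(1) up to the displayed constant $4$). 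Then restrict the lamplighter position to the interval $[-R,R]$ with $R=\lfloor n^{(1+\e)/2}\rfloor$: make the walk reflect (or stay put) whenever a step would take $m_t$ outside $[-R,R]$, exactly as in the Følner-truncation device used in the proof of Theorem~\ref{thm:beta p}. Since the unrestricted $m_t$ stays inside $[-R,R]$ for all $t\le n$ with probability $\ge 1/2$ (by the range bound, $R$ is much larger than the typical displacement $n^{1/q}$), on that event the restricted walk coincides with the unrestricted one, so the lower bound $\E[d_{\Z\bwr\Z}(Z_n,Z_0)]\gtrsim n^{(2q-1)/q^2}\gtrsim n^{3/4}$ survives up to a constant; meanwhile condition~(2) holds \emph{point-wise} by construction, because the lamplighter never leaves $[-R,R]$ and the lamp configuration is supported there, so the distance to the zero section is at most the lamplighter displacement plus a bounded correction, which is $\le 2R\le 2n^{(1+\e)/2}$.

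Third, I would make it a genuine symmetric finite-state chain started from the uniform distribution. The natural state space is the finite set of $(f,x)\in\Z\bwr\Z$ with $x\in[-R,R]$, $\supp(f)\subseteq[-R,R]$, and $|f(j)|\le Kn$ for all $j$ (any lamp value reachable in $n$ truncated steps is $O(n)$); this set is finite, and the truncated-and-reflected transition rule is symmetric on it because the step measure $\mu$ is symmetric, the reflection rule is symmetric, and left-translation by a group element is a bijection preserving the state space up to the boundary (one checks the matrix $a_{ij}$ is symmetric just as in Theorem~\ref{thm:beta p}). Starting $Z_0$ uniformly on this state space rather than at $(\0,0)$ only helps: by the translation argument of Theorem~\ref{thm:beta p} (intersecting with the good event that the walk stays interior), the expected displacement from a uniformly random start dominates a constant multiple of the displacement from the identity. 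This yields all three conditions. The main obstacle, as flagged, is the bookkeeping needed to confirm that truncating $X$ does not destroy the stable-walk range and return-probability estimates and that the reflection at $\pm R$ costs only a constant factor in the lower bound~(3); both are handled by the ``stay interior with probability $\ge 1/2$'' argument together with the robustness of the local-CLT estimate~\eqref{eq:return} under bounded perturbations of $X$, but they must be written carefully so that the constants in (1)--(3) come out as stated.
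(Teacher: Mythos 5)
Your overall skeleton is the same as the paper's (restrict a bounded-step walk on $\Z\bwr\Z$ to a finite box by a lazy/stay-put rule, start from the uniform distribution, exhibit a ``core'' of states from which the restricted chain coincides with the free walk for $n$ steps, and import a drift lower bound), but three of your steps have genuine gaps. First, the truncated stable walk: cutting the step variable $X$ at a constant $C$ produces a finite-variance walk, so its characteristic function near $0$ behaves like $1-c\theta^2$ rather than $e^{-c|\theta|^q}$; the estimates~\eqref{eq:return}, \eqref{eq:stable range}, \eqref{eq:stable dist} are \emph{not} preserved up to constants (one gets $\Pr[S_n=0]\approx n^{-1/2}$), and the claimed drift $n^{(2q-1)/q^2}>n^{3/4}$ is unavailable after truncation. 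A bounded-step walk on $\Z\bwr\Z$ does have drift $\gtrsim n^{3/4}$, but that is exactly {\`E}rschler's estimate~\eqref{eq:erschler} for the walk generated by $x_{n_1,n_2,n_3}$ with $n_i\in\{-1,1\}$, which is what the paper uses directly; your stable-walk detour buys nothing here and its justification (``robustness of the local CLT under bounded perturbations'') is incorrect.

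Second, your finite state space caps lamp values at $Kn$ with $K$ a constant. Since each lamp can change by up to $2n$ during $n$ steps, the core from which the free walk cannot overflow this cap consists of configurations with values at most $Kn-2n$, and its proportion in the state space is of order $(1-2/K)^{\Theta(n)}$, exponentially small; the ``uniform start only helps'' translation argument then fails. This is precisely why the paper takes lamp bounds $n^2$ and $n^2-2n$ in $U_n$ and $V_n$, which makes the ratio bounded below as in~\eqref{eq:V_n/U_n}. Third, ``the unrestricted lamplighter stays in $[-R,R]$ with probability $\ge 1/2$'' is not enough to transfer the drift bound: $d_{\Z\bwr\Z}(W_n,(\0,0))$ can be as large as $4n$, so a bad event of probability $1/2$ could carry expectation of order $n\gg n^{3/4}$. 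One needs the exceedance probability to be $o(n^{-1/4})$, which the paper obtains from Doob's maximal inequality combined with Khinchine's inequality for a large moment as in~\eqref{eq:doob}. Each of these defects is repairable, but repairing them essentially reproduces the paper's proof of Lemma~\ref{lem:tubular}.
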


Assuming Lemma~\ref{lem:tubular} for the moment we shall prove
Theorem~\ref{thm:no extend}.

\begin{proof}[Proof of Theorem~\ref{thm:no extend}]
Fix $\e\in (0,1/11)$. By Theorem~\ref{thm:zero section} there exists
a function $F:(\Z\bwr\Z)_0\to L_2$ and $c=c(\e)>0$ such that
$\|F\|_{\Lip}=1$ and for every $x,y\in (\Z\bwr\Z)_0$ we have
\begin{equation}\label{eq:lower no extend}
\|F(x)-F(y)\|_2\ge cd_{\Z\bwr \Z}(x,y)^{(3-\e)/4}.
\end{equation}
Assume for the sake of contradiction that there exists a function
$\widetilde F:\Z\bwr \Z\to L_2$ such that $\widetilde
F\upharpoonright_{(\Z\bwr\Z)_0}=F$ and $\left\|\widetilde
F\right\|_{\Lip}=L<\infty$.

Let $n_0(\e)$ and $\{Z_t\}_{t=0}^\infty$ be as in
Lemma~\ref{lem:tubular} and fix $n\ge n_0(\e)$. Write
$Z_t=(f_t,k_t)$ and define $Z_t^0=(f_t,0)\in (\Z\bwr \Z)_0$. The
second assertion of Lemma~\ref{lem:tubular} implies that for all
$t\ge 0$ we have
\begin{equation}\label{eq:close to zero}
d_{\Z\bwr\Z}\left(Z_t,Z_t^0\right)\le 2n^{(1+\e)/2}.
\end{equation}
Using the Markov type $2$ property of $L_2$~\cite{Bal} (with
constant $1$) and the first assertion of Lemma~\ref{lem:tubular} we
see that:
\begin{equation}\label{eq:use markov l_2}
 \E\left[\left\|\widetilde
F(Z_n)-\widetilde F(Z_0)\right\|_2^2\right]\le
n\E\left[\left\|\widetilde F(Z_1)-\widetilde
F(Z_0)\right\|_2^2\right]\le nL^2 \E\left[d_{\Z\bwr
\Z}\left(Z_1,Z_0\right)^2\right]\le 16nL^2.
\end{equation}
Note the following elementary corollary of the triangle inequality
which holds for every metric space $(X,d)$, every $p\ge 1$ and every
$a_1,a_2,b_1,b_2\in X$:
\begin{equation}\label{eq:triangle}
d(a_1,b_1)^p\ge
\frac{1}{3^{p-1}}d(a_2,b_2)^p-d(a_1,a_2)^p-d(b_1,b_2)^p.
\end{equation}
Hence we have the following point-wise inequality:
\begin{eqnarray}\label{triangle twice}
\nonumber
&&\!\!\!\!\!\!\!\!\!\!\!\!\!\!\!\!\!\!\!\!\!\!\!\!\!\!\!\!\!\!\!\!\!\!\left\|\widetilde
F(Z_n)-\widetilde
F(Z_0)\right\|_2^2\stackrel{\eqref{eq:triangle}}{\ge} \frac13
\left\|
F\left(Z_n^0\right)-F\left(Z_0^0\right)\right\|_2^2-\left\|\widetilde
F(Z_n)-\widetilde F\left(Z_n^0\right)\right\|_2^2-\left\|\widetilde
F(Z_0)-\widetilde F\left(Z_0^0\right)\right\|_2^2\\
\nonumber&\stackrel{\eqref{eq:lower no extend}}{\ge}& \frac{c^2}{3}
d_{\Z\bwr \Z}\left(Z_n^0,Z_0^0\right)^{(3-\e)/2}-L^2d_{\Z\bwr
\Z}\left(Z_n,Z_n^0\right)^2-L^2d_{\Z\bwr \Z}\left(Z_0,Z_0^0\right)^2
\\\nonumber&\stackrel{\eqref{eq:triangle}\wedge \eqref{eq:close to zero}}{\ge}& \frac{c^2}{3}\left(\frac13d_{\Z\bwr
\Z}\left(Z_n,Z_0\right)^{(3-\e)/2}-d_{\Z\bwr
\Z}\left(Z_n,Z_n^0\right)^{(3-\e)/2}-d_{\Z\bwr
\Z}\left(Z_0,Z_0^0\right)^{(3-\e)/2}\right)-8L^2n^{1+\e} \\
&\stackrel{\eqref{eq:close to zero}}{\ge}& \frac{c^2}{9}d_{\Z\bwr
\Z}\left(Z_n,Z_0\right)^{(3-\e)/2}-10L^2n^{1+\e}.
\end{eqnarray}
Taking expectation in~\eqref{triangle twice} and using the third
assertion of Lemma~\ref{lem:tubular} we see that:
\begin{multline*}
16nL^2\ge\E\left[\left\|\widetilde F(Z_n)-\widetilde
F(Z_0)\right\|_2^2\right]\ge \frac{c^2}{9}\E\left[d_{\Z\bwr
\Z}\left(Z_n,Z_0\right)^{(3-\e)/2}\right]-10L^2n^{1+\e}\\\ge
\left(\E\left[d_{\Z\bwr
\Z}\left(Z_n,Z_0\right)\right]\right)^{(3-\e)/2}-10L^2n^{1+\e}\gtrsim
n^{3(3-\e)/8}-10L^2n^{1+\e},
\end{multline*}
which is a contradiction for large enough $n$ since the assumption
$\e<1/11$ implies that $\frac{3(3-\e)}{8}>1+\e$.
\end{proof}

It remains to prove Lemma~\ref{lem:tubular}.

\begin{proof}[Proof of Lemma~\ref{lem:tubular}]
Fix an integer $n\in \N$ and $\e\in (0,1/4)$.
Define two subsets $U_n,V_n\subseteq \Z\bwr \Z$ by
$$
U_n\coloneqq \left\{(f,k)\in \Z\bwr \Z:\ \supp(f)\subseteq
\left[-n,n\right],\ |k|\le 2n^{(1+\e)/2},\ |f(\ell)|\le n^2\ \
\forall\  \ell\in \Z\right\},
$$
$$
V_n\coloneqq \left\{(f,k)\in \Z\bwr \Z:\ \supp(f)\subseteq
\left[-n,n\right],\ |k|\le n^{(1+\e)/2}, |f(\ell)|\le n^2-2n\
\forall\ \ell\in \Z\right\}.
$$
Then
$|U_n|\approx\left(2n^2+1\right)^{2n+1}\left(4n^{(1+\e)/2}+1\right)$
and
$|V_n|\approx\left(2n^2-4n+1\right)^{2n+1}\left(2n^{(1+\e)/2}+1\right)$
so that \begin{equation}\label{eq:V_n/U_n}
\frac{|V_n|}{|U_n|}\gtrsim 1. \end{equation}

Consider the set $S=\{x_{n_1,n_2,n_3}:\ n_1,n_2,n_3\in \{-1,1\}\}$,
where $x_{n_1,n_2,n_3}$ are as defined in~\eqref{eq:def x_ni}. Then
$S$ is a symmetric generating set of $\Z\bwr \Z$ consisting of $8$
elements. Let $g_1,g_2,\ldots$ be i.i.d. elements of $\Z\bwr \Z$
which are uniformly distributed over $S$ and denote $W_m\coloneqq
g_1\cdots g_m=(f_m,k_m)$. Then by construction the sequence
$\{k_m\}_{m=1}^\infty$ has the same distribution as the standard
random walk on $\Z$, i.e., the same distribution as
$\{S_m=\e_1+\cdots+\e_m\}_{m=1}^\infty$ where $\e_1,\e_2,\ldots$ are
i.i.d. Bernoulli random variables (this fact was explained in
greater generality in Section~\ref{sec:copy walk}). Also, as shown
by {\`E}rschler~\cite{Ersh01}, we have
\begin{eqnarray}\label{eq:erschler}
\E\left[d_{\Z\bwr\Z}\left(W_n,(\0,0)\right)\right]\ge c n^{3/4},
\end{eqnarray}
where $c>0$ is a universal constant. Note  that since
$d_{\Z\bwr\Z}(x_{n_1,n_2,n_3},(\0,0))\le 4$ for every
$n_1,n_2,n_3\in \{-1,1\}$ we have point-wise bound
\begin{eqnarray}\label{eq:pointwise special generators}
d_{\Z\bwr\Z}\left(W_n,(\0,0)\right)\le 4n.
\end{eqnarray}
 Now let $Z_0$ be
uniformly distributed over $U_n$ and independent of
$\{g_i\}_{i=1}^\infty$. For $t\in \N$ define
$$
Z_t\coloneqq \left\{\begin{array}{ll}  Z_{t-1}g_t &\mathrm{if\ } Z_{t-1}g_t\in U_n,\\
Z_{t-1} & \mathrm{otherwise}.
\end{array}\right.
$$
The first two assertions of Lemma~\ref{lem:tubular} hold true by
construction. It remains to establish the third assertion of
Lemma~\ref{lem:tubular}.

Consider the events $\mathcal E\coloneqq\{Z_0\in V_n\}$ and
$\mathcal F\coloneqq \left\{\max_{m\le n}|k_m|\le
n^{(1+\e)/2}\right\}$. Note that if the event $\mathcal E\cap
\mathcal F$ occurs then $Z_n=Z_0W_n$ since by design in this case
$Z_0\in V_n$ and therefore $Z_0W_t$ cannot leave $U_n$ for all $t\le
n$. It follows that
\begin{multline}\label{eq:before doob}
\E\left[d_{\Z\bwr\Z}\left(Z_n,Z_0\right)\right]\ge
\E\left[d_{\Z\bwr\Z}\left(W_n,(\0,0)\right)\1_{\mathcal E\cap
\mathcal F}\right]=\Pr\left[\mathcal
E\right]\left(\E\left[d_{\Z\bwr\Z}\left(W_n,(\0,0)\right)\right]-\E\left[d_{\Z\bwr\Z}\left(W_n,(\0,0)\right)
\1_{\mathcal{F}^c}\right]\right)\\\stackrel{\eqref{eq:erschler}\wedge\eqref{eq:pointwise
special generators}}{\ge}
\frac{|V_n|}{|U_n|}\left(cn^{3/4}-4n(1-\Pr[\mathcal
F])\right)\stackrel{\eqref{eq:V_n/U_n}}{\gtrsim}
cn^{3/4}-4n(1-\Pr[\mathcal F]).
\end{multline}
For large enough $n$ (depending on $\e$) we have
\begin{equation}\label{eq:use doob}
4n(1-\Pr[\mathcal F])\le \frac{c}{2}n^{3/4}, \end{equation} since
Doob's maximal inequality (see e.g.~\cite{Durrett96}) implies that
for every $p>1$ we have
\begin{equation}\label{eq:doob}
1-\Pr[\mathcal F]=\Pr\left[\max_{m\le n}|k_m|>
n^{(1+\e)/2}\right]\le
\left(\frac{p}{p-1}\right)^p\frac{\E\left[|\e_1+\cdots+\e_n|^p\right]}{n^{p(1+\e)/2}}\stackrel{(\clubsuit)}{\lesssim}
\left(\frac{p}{p-1}\right)^p\frac{(10np)^{p/2}}{n^{p(1+\e)/2}}=
\frac{C(p)}{n^{p\e/2}},
\end{equation}
where in ($\clubsuit$) we used Khinchine's inequality (see
e.g.~\cite{MS86}) and $C(p)$ depends only on $p$. Hence choosing $p$
large enough in~\eqref{eq:doob} (depending on $\e$)
implies~\eqref{eq:use doob}. Combining~\eqref{eq:before doob}
and~\eqref{eq:use doob} implies that
$$
\E\left[d_{\Z\bwr\Z}\left(Z_n,Z_0\right)\right]\gtrsim n^{3/4},
$$
which completes the proof of Lemma~\ref{lem:tubular}.
\end{proof}
\section{Reduction to equivariant embeddings}\label{sec:equivariant}

Recall that a Banach space $(X,\|\cdot\|_X)$ is said to be finitely
representable in a Banach space $(Y,\|\cdot\|_Y)$ if for every
$\e>0$ and every finite dimensional subspace $F\subseteq X$ there is
a linear operator $T:F\to Y$ such that for every $x\in F$ we have
$\|x\|_X\le\|Tx\|_Y\le (1+\e)\|x\|_X$.

\begin{thm}\label{gromov} Let $\Gamma$ be an amenable group
which is generated by a finite symmetric set $S\subseteq \Gamma$.
Fix $p\ge 1$, two functions $\omega,\Omega:[0,\infty)\to [0,\infty)$
and a Banach space $(X,\|\cdot\|_X)$ such that there is a mapping
$\psi:\Gamma\to X$ which satisfies:
\begin{equation}\label{eq:assumption uniform}
g,h\in \Gamma\implies \omega\left(d_\Gamma(g,h)\right)\le
\|\psi(g)-\psi(h)\|_X\le \Omega\left(d_\Gamma(g,h)\right).
\end{equation}
Then there exists a Banach space $Y$ which is finitely representable
in $\ell_p(X)$ and an equivariant mapping $\Psi:\Gamma\to Y$ such
that
\begin{equation}\label{eq:goal uniform}
g,h\in \Gamma\implies \omega\left(d_\Gamma(g,h)\right)\le
\left\|\Psi(g)-\Psi(h)\right\|_{Y}\le
\Omega\left(d_\Gamma(g,h)\right).
\end{equation}
Moreover, if $X=L_p(\mu)$ for some measure $\mu$ then $Y$ can be
taken to be isometric to $L_p$.
\end{thm}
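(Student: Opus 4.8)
The plan is to adapt the classical Aharoni--Maurey--Mityagin/Gromov averaging argument. First I would normalise $\psi$ so that $\psi(e_\Gamma)=0$ and pass to the ``boundary cocycle'': for $g\in\Gamma$ define $c_g\in\ell_\infty(\Gamma,X)$ by $c_g(x):=\psi(xg)-\psi(x)$. This is bounded, with $\sup_{x}\|c_g(x)\|_X\le\Omega\big(d_\Gamma(g,e_\Gamma)\big)$, and one computes the exact pointwise identity $c_{gh}=\pi(g)c_h+c_g$, where $\pi(g)$ is the linear isometry of $\ell_\infty(\Gamma,X)$ given by $(\pi(g)F)(x):=F(xg)$. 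Here $\pi$ is a homomorphism $\Gamma\to\Isom\big(\ell_\infty(\Gamma,X)\big)$ acting by permutation of the $\Gamma$-coordinates, and $c\in Z^1(\Gamma,\pi)$.

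Next I would use amenability to replace the sup-norm by an $L_p$-type seminorm while keeping $\pi$ isometric. Let $\mathfrak m$ be a two-sided invariant mean on $\ell_\infty(\Gamma)$ (such a mean exists whenever $\Gamma$ is amenable), and for $F\in\ell_\infty(\Gamma,X)$ set $\|F\|_*^p:=\mathfrak m_x\big(\|F(x)\|_X^p\big)$. Minkowski's inequality shows $\|\cdot\|_*$ is a seminorm; two-sided invariance of $\mathfrak m$ shows each $\pi(g)$ is a $\|\cdot\|_*$-isometry (uniform boundedness of elements of $\ell_\infty(\Gamma,X)$ is what prevents any boundary obstruction). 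Let $N:=\{F:\|F\|_*=0\}$, let $Y$ be the completion of $\ell_\infty(\Gamma,X)/N$, let $\bar\pi$ be the induced isometric linear action of $\Gamma$ on $Y$, and put $\Psi(g):=[c_g]\in Y$. The exact identity $c_{gh}=\pi(g)c_h+c_g$ descends, so $\Psi\in Z^1(\Gamma,\bar\pi)$ and $\Psi$ is equivariant. For the distortion bound note $\Psi(g)-\Psi(h)=\big[x\mapsto\psi(xg)-\psi(xh)\big]$, and by left-invariance of $d_\Gamma$ the function $x\mapsto\|\psi(xg)-\psi(xh)\|_X$ takes values in $\big[\omega(d_\Gamma(g,h)),\Omega(d_\Gamma(g,h))\big]$; since $\mathfrak m$ is a mean, the same interval (after $p$-th powers) contains $\|\Psi(g)-\Psi(h)\|_Y^p$, which is exactly~\eqref{eq:goal uniform}.

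For finite representability, a finite-dimensional subspace of $Y$ lies, up to a factor $1+\e$, in the dense image of some finite-dimensional $\tilde Z\subseteq\ell_\infty(\Gamma,X)$; since $\mathfrak m$ is a weak-$*$ limit of finitely supported probability measures on $\Gamma$, one may choose such a measure $\nu$ with $\big(\sum_x\nu(x)\|F(x)\|_X^p\big)^{1/p}$ within $1+\e$ of $\|F\|_*$ for $F$ in a finite $\e$-net of the unit sphere of $\tilde Z$, hence, by homogeneity, on all of $\tilde Z$. Then $F\mapsto\big(\nu(x)^{1/p}F(x)\big)_{x\in\supp\nu}$ realises $\tilde Z$ with distortion $1+O(\e)$ inside $\ell_p(\supp\nu,X)$, which embeds isometrically into $\ell_p(X)$. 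Thus $Y$ is finitely representable in $\ell_p(X)$.

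Finally, the ``moreover'' statement. When $X=L_p(\mu)$, the space $\ell_\infty(\Gamma,L_p(\mu))$ is a Banach lattice under the pointwise order, $\|\cdot\|_*$ is a monotone lattice seminorm, $N$ is an order ideal, so $Y$ is a Banach lattice on which $\bar\pi$ acts by lattice isometries. The crux is that $Y$ is an abstract $L_p$-space, i.e.\ $\|F+G\|_*^p=\|F\|_*^p+\|G\|_*^p$ whenever $[F]$ and $[G]$ are disjoint: the elementary pointwise estimate $\big||a+b|^p-|a|^p-|b|^p\big|\le C_p\,(|a|\wedge|b|)(|a|+|b|)^{p-1}$ on $L_p(\mu)$, followed by H\"older in the $L_p(\mu)$-variable and then applying $\mathfrak m$, bounds the additivity defect by a constant times $\mathfrak m_x\big(\big\||F(x)|\wedge|G(x)|\big\|_{L_p}\big)$, which vanishes when $\big\||F|\wedge|G|\big\|_*=0$. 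By the representation theorem for $L_p$-Banach lattices ($1\le p<\infty$), $Y$ is lattice-isometric to $L_p(\nu)$ for some measure $\nu$, so one may take $Y=L_p$. I expect this last step to be the main obstacle: verifying $p$-additivity on disjoint elements, and — more structurally — arranging that the construction delivers a space that is honestly an $L_p$-space carrying the full isometric action, rather than merely an isometric subspace of one on which the $\Gamma$-action might not extend. Everything preceding it is a routine adaptation of the amenable averaging trick, the only subtlety being the use of a two-sided invariant mean (equivalently, a two-sided F\o lner sequence) so that the coordinate-shift action $\pi$ stays $\|\cdot\|_*$-isometric.
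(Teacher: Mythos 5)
Your construction of $Y$ and $\Psi$ is the same averaging scheme as the paper's: the right-translate cocycle $c_g=R_g\psi-\psi$, the seminorm $f\mapsto\big(\M(\|f\|_X^p)\big)^{1/p}$ attached to an invariant mean, quotient by the null space and complete, with the compression bounds coming from left-invariance of $d_\Gamma$; this part of your argument is correct. Where you genuinely diverge is in how the two structural properties of $Y$ are obtained. The paper takes the mean to be an ultralimit of F\o lner averages precisely so that $Y$ embeds isometrically, via $f\mapsto [f\upharpoonright_{F_n}]_{n}$, into the ultraproduct of the spaces $X^{F_n}$ with normalized $\ell_p$-norms; finite representability in $\ell_p(X)$ and, when $X=L_p(\mu)$, the identification of the ambient ultraproduct with an $L_p(\tau)$ space are then quoted from ultraproduct theory, and the abstract-$L_p$ property of $Y$ is verified inside that concrete lattice. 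You instead allow an arbitrary invariant mean, prove finite representability directly by approximating the mean weak-$*$ by finitely supported probability measures (this works, but to turn the additive approximation on a net into a $(1+\e)$-embedding you should fix lifts of a basis with controlled sup norm and use that $\|\cdot\|_{\M,p}$ is a genuine norm on their span), and you get the $p$-additivity on disjoint elements of $Y_0$ from the scalar inequality plus H\"older for means; you still need the routine disjointification $u_n-u_n\wedge v_n$, $v_n-u_n\wedge v_n$ to pass $p$-additivity to the completion before invoking the Kakutani-type representation. This is a more self-contained route, and it incidentally addresses the paper's own remark that it was unclear whether these properties of $Y$ hold for general invariant means; it also avoids the paper's worry about whether the $\Gamma$-action extends to the ambient ultraproduct, since in your setup the action lives on all of $Y$ from the start.

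The genuine gap is your last sentence. Knowing that $Y$ is lattice-isometric to $L_p(\nu)$ for some measure $\nu$ does not allow you to "take $Y=L_p$": $L_p(\nu)$ may be nonseparable or have atoms, and for $p\neq 2$ the spaces $\ell_p$, $\ell_p^n$, $L_p\oplus_p\ell_p$, etc., are not isometric to $L_p$. Since $\alpha_p^\#$ is defined through cocycles into $L_p$ itself, the "moreover" clause (and its application $\alpha_p^*(\Gamma)=\alpha_p^\#(\Gamma)$) requires transferring the isometric action $\bar\pi$ and the cocycle $\Psi$ from $L_p(\nu)$ to $L_p$ while preserving all norms $\|\Psi(g)-\Psi(h)\|$. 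This is exactly the content of Lemma~\ref{lem:atomic} in the paper: one first cuts down to a separable, $\Gamma$-invariant $L_p(\nu,\FF_\infty)$ containing the cocycle (by iterating spans of translates and the $\sigma$-algebras they generate), and then uses Lamperti's theorem on isometries of $\ell_p$ and $L_p$ ($p\neq 2$) to convert the atomic part---where the action is a signed permutation of coordinates---into an isometric action on $L_p$ carrying a cocycle with the same norms. Without this step, or some substitute for it, your argument proves the theorem with $Y=L_p(\nu)$ but not with $Y$ isometric to $L_p$ as stated.
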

Note that as a special case of Theorem~\ref{gromov} we conclude that
for every $p\ge 1$ if $\Gamma$ is an amenable group then
$\alpha_p^*(\Gamma)=\alpha_p^\#(\Gamma)$.

In what follows given a Banach space $X$ we denote by $\Isom(X)$ the
group of all linear isometric automorphims of $X$. We shall require
the following lemma in the proof of Theorem~\ref{gromov}:

\begin{lem}\label{lem:atomic} Fix $p\in [1,\infty)$. Let $G$ be a finitely generated group and
$(\Omega,\FF,\mu)$ be a measure space (thus $\Omega$ is a set, $\FF$
is a $\sigma$ algebra, and $\mu$ is a measure on $\FF$). Assume that
$\pi_0:G\to \Isom\left(L_p(\mu,\FF)\right)$ is a homomorphism and
that $f_0\in Z^1(G,\pi_0)$ a $1$-cocycle. Then there exists a
homomorphism $\pi:G\to \Isom\left(L_p\right)$ and a $1$-cocycle
$f\in Z^1(G,\pi)$ such that
$\|f(x)\|_{L_p}=\|f_0(x)\|_{L_p(\mu,\FF)}$ for all $x\in G$.
\end{lem}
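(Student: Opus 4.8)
The plan is to use the countability of $G$ to pass from the possibly enormous space $L_p(\mu,\FF)$ to a separable $L_p$-space on which the action and the cocycle already live, and then to transplant this into $L_p$. The starting observation is that since $f_0\in Z^1(G,\pi_0)$ we have $\pi_0(x)f_0(y)=f_0(xy)-f_0(x)$ for all $x,y\in G$; hence the linear span of the countable set $\{f_0(x):x\in G\}$ is $\pi_0$-invariant, and so is its closure $V$. Applying the same identity to $x^{-1}$ shows $\pi_0(x)V=V$, so each $\pi_0(x)$ restricts to a surjective linear isometry of the separable space $V$, and $f_0\in Z^1\big(G,\pi_0|_V\big)$.

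When $p=2$ the proof is essentially complete: $V$ is a separable Hilbert space, so one may fix a separable Hilbert space $V'$ and a surjective linear isometry $\Theta:V\oplus_2 V'\to L_2$, and put $\pi(x)\coloneqq\Theta\circ\big((\pi_0(x)|_V)\oplus\mathrm{Id}_{V'}\big)\circ\Theta^{-1}$ and $f(x)\coloneqq\Theta\big(f_0(x),0\big)$. That $\pi$ is a homomorphism follows from $(\pi_0(x)|_V)(\pi_0(y)|_V)=\pi_0(xy)|_V$, that $f\in Z^1(G,\pi)$ follows from the cocycle identity for $f_0$, and $\|f(x)\|_{L_2}=\|f_0(x)\|_V=\|f_0(x)\|_{L_2(\mu,\FF)}$ since $\Theta$ and the inclusion $V\hookrightarrow L_2(\mu,\FF)$ are isometric.

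For $p\neq 2$ the same scheme works once $V$ is replaced by a $\pi_0$-invariant subspace that is itself an $L_p$-space, and this is where I would invoke the Banach--Lamperti description of surjective linear isometries of $L_p$-spaces: after the routine reduction of $L_p(\mu,\FF)$ to $L_p$ of a strictly localizable measure, each $\pi_0(x)$ has the form $\pi_0(x)g=h_x\cdot(g\circ\phi_x)$, where the $\phi_x$ furnish a $G$-action by automorphisms of the measure algebra and each weight $h_x$ is measurable. Since $G$ is countable, the $\sigma$-algebra $\FF_0$ generated (mod null sets) by the level sets of all the functions $f_0(x)$ and $h_x$ and closed under the orbit $\{\phi_x:x\in G\}$ is countably generated, $\{\phi_x\}$-invariant, and makes every $h_x$ and every $f_0(x)$ measurable; consequently each $\pi_0(x)$, and also $\pi_0(x)^{-1}=\pi_0(x^{-1})$, maps $\FF_0$-measurable functions to $\FF_0$-measurable functions. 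Restricting further to the $\{\phi_x\}$-invariant $\sigma$-finite hull of the common support of the $f_0(x)$, one gets a separable $\sigma$-finite $L_p$-space $W\coloneqq L_p(\mu|_{\FF_0})$ that contains every $f_0(x)$ and is invariant under each $\pi_0(x)$, with $\pi_0(x)|_W$ surjective. Finally $W$, being a separable $L_p$-space, is isometric to an $L_p$-band in $L_p$, say $L_p\cong W_1\oplus_p W_1^{\perp}$ with $J:W\to W_1$ a surjective isometry; defining $\pi(x)\coloneqq\big(J(\pi_0(x)|_W)J^{-1}\big)\oplus\mathrm{Id}_{W_1^{\perp}}$ and $f(x)\coloneqq(Jf_0(x),0)$ and verifying the homomorphism, cocycle and norm conditions exactly as in the Hilbert case completes the proof. (If one only wants $Y$ isometric to an $L_p$-space, one may even take $Y=W$, $\pi=\pi_0|_W$, $f=f_0$, and the last step is unnecessary.)

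The main obstacle I expect is the measure-theoretic bookkeeping in the $p\neq2$ case: one must arrange simultaneously that $\FF_0$ is countably generated, invariant under every $\phi_x$, and rich enough to see every $h_x$ and every $f_0(x)$, and that neither this construction nor the passage to a $\sigma$-finite hull disturbs any of these properties — all of which rests on having the explicit Banach--Lamperti form of the $\pi_0(x)$ and on $x\mapsto\phi_x$ being a genuine action on the measure algebra (so that the orbit of a countable generating family is again countable and the resulting $\sigma$-algebra is honestly invariant). The remaining ingredients — absorbing a separable $L_p$-space as an $L_p$-band and the algebraic checks for $\pi$ and $f$ — are entirely routine.
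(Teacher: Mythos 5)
The $p=2$ half of your argument is fine, and your first reduction (cutting down to a separable, $\pi_0$-invariant $L_p$-subspace containing the cocycle, via Banach--Lamperti and a countably generated invariant sub-$\sigma$-algebra) is essentially the same move the paper makes, though the paper does it more economically by an inductive construction of sub-$\sigma$-algebras $\FF_n$ generated by the orbit subspaces, without invoking Lamperti at that stage. The genuine gap is in your final transplantation step for $p\neq 2$: the claim that $W$, ``being a separable $L_p$-space, is isometric to an $L_p$-band in $L_p$'' with $L_p\cong W_1\oplus_p W_1^{\perp}$, so that $\pi_0|_W$ can be extended by the identity on the complement. For $p\neq 2$ every isometric $\ell_p$-summand of $L_p=L_p[0,1]$ is a band, i.e.\ of the form $\1_A L_p=L_p(A)$ (this is the classical fact that the $L_p$-projections on $L_p(\mu)$ are exactly the band projections, itself a consequence of Lamperti's theorem), and since Lebesgue measure is non-atomic every nonzero band $L_p(A)$ is isometric to $L_p$ itself. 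So if $W$ has atoms --- which is precisely the hard case, e.g.\ $W\cong\ell_p$, $\ell_p^n$, or $L_p\oplus_p\ell_p(S)$, and it does occur (take $\mu$ purely atomic, e.g.\ the regular representation on $\ell_p(G)$ with a cocycle) --- there is no surjective isometry $J:W\to W_1$ onto an $L_p$-summand of $L_p$, and indeed $L_p$ is not isometric to $\ell_p\oplus_p Z$ for any $Z$. Your construction therefore breaks down exactly in the cases the lemma is really about; note also that merely landing in ``some separable $L_p(\nu)$'' (your parenthetical shortcut) does not suffice, since the lemma, and its use in the equivariant compression argument, need the target to be $L_p$ itself.

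What is needed instead --- and what the paper does --- is not to extend the given embedding but to re-represent the atomic part. After the separable reduction one knows $W$ is isometric to one of $L_p$, $\ell_p$, $\ell_p^n$, $L_p\oplus_p\ell_p(S)$. By Lamperti's theorem each $\pi_0(x)$ restricted to the atomic part acts as a signed permutation of the atoms (in the mixed case one first notes that, since isometries preserve disjointness, they map atoms to atoms, so the band decomposition into atomic and non-atomic parts is preserved and the cocycle splits accordingly). One then builds a \emph{new} action of $G$ on $L_p[0,1]$ realizing this signed permutation by measure-scaling maps between the dyadic intervals $[2^{-i},2^{-i+1}]$, and a \emph{new} cocycle whose value on $x$ is the step function with suitably rescaled coordinates of $f_0(x)$; this only reproduces the norms $\|f_0(x)\|$, which is all the lemma asks, rather than the vectors themselves. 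That change of viewpoint --- constructing a fresh pair $(\pi,f)$ with matching cocycle norms instead of isometrically complementing $W$ inside $L_p$ --- is the missing idea.
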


\begin{proof} Given $A\subseteq L_p(\mu,\FF)$ we denote as usual the smallest sub-$\sigma$ algebra of $\FF$ with respect to which
all the elements of $A$ are measurable by $\sigma(A)$. Define
inductively a sequence $\{\FF_n\}_{n=1}^\infty$ of sub-$\sigma$
algebras of $\FF$ and two sequences $\{U_n\}_{n=1}^\infty$,
$\{V_n\}_{n=1}^\infty$ of linear subspaces of $L_p(\mu,\FF)$ as
follows:
$$U_1=\span\left(\bigcup_{x\in G}\pi_0(x)f_0(G)\right), \quad\FF_1\coloneqq
\sigma\left(U_1\right),\quad V_1=L_p(\mu,\FF_1), $$
and inductively
$$
U_{n+1}\coloneqq \span\left(\bigcup_{x\in G}\pi_0(x)V_n\right),\quad
\FF_{n+1}\coloneqq \sigma(U_{n+1}),\quad V_{n+1}=L_p(\mu,\FF_{n+1}).
$$
By construction for each $n\in \N$ we have $U_n\subseteq
V_n\subseteq U_{n+1}$, the measure space $(\Omega,\FF_n,\mu)$ is
separable (since $G$ is countable) and $\FF_{n+1}\supseteq \FF_n$.
Let $\FF_\infty$ be the $\sigma$-algebra generated by
$\bigcup_{n=1}^\infty \FF_n$. Note that for every $\e>0$ and every
$A\in \FF_\infty$ there is some $n\in \N$ and $B\in \FF_n$ such that
$\mu(A\triangle B)\le \e$ (this is because the set of all such $A\in
\FF$ forms a $\sigma$ algebra, and therefore contains $\FF_\infty$).
By considering approximations by simple functions we deduce that
\begin{equation}\label{eq:infinite union}
L_p(\mu,\FF_\infty)=\overline{\bigcup_{n=1}^\infty V_n},
\end{equation}
where the closure is taken in $L_p(\mu,\FF)$. We claim that for each
$x\in G$ we have $\pi_0(x)\in
\Isom\left(L_p(\mu,\FF_\infty)\right)$. Indeed, by construction
$\pi_0(x)U_n=U_n$ for all $n\in \N$, and therefore $V_n\subseteq
\pi_0(x)V_{n+1}\subseteq V_{n+2}$, which implies that
$\pi_0(x)L_p(\mu,\FF_\infty)=L_p(\mu,\FF_\infty)$, as required. Note
also that $f(G)\subseteq L_p(\mu,\FF_\infty)$.

Since $L_p(\mu,\FF_\infty)$ is separable it is isometric to one of
the spaces:
\begin{equation}\label{eq:list}
L_p,\quad \ell_p,\quad, \left\{\ell_p^n\right\}_{n=1}^\infty,\quad
L_p\oplus \ell_p,\quad \left\{L_p\oplus
\ell_p^n\right\}_{n=1}^\infty,
\end{equation}
where the direct sums in~\eqref{eq:list} are $\ell_p$ direct sums
(see~\cite{Woj91}). In what follows we will slightly abuse notation
by saying that $L_p(\mu,\FF_\infty)$ is equal to one of the spaces
listed in~\eqref{eq:list}. The standard fact~\eqref{eq:list} follows
from decomposing the measure $\mu\upharpoonright_{\FF_\infty}$ into
a non-atomic part and a purely atomic part, and noting that the
purely atomic part can contain at most countably many atoms while
the non-atomic part is isomorphic to $[0,1]$ (equipped with the
Lebesgue measure) by Lebesgue's isomorphism theorem
(see~\cite{Halmos}).

If $L_p(\mu,\FF_\infty)=L_p$ then we are done, since we can take
$\pi=\pi_0\upharpoonright_{L_p(\mu,\FF_\infty)}$, so assume that
$L_p(\mu,\FF_\infty)$ is not isometric to $L_p$. We may therefore
also assume that $p\neq 2$. If $L_p(\mu,\FF_\infty)=\ell_p$ then by
Lamperti's theorem~\cite{Lam58} (see also Chapter 3 in~\cite{FJ03})
for every $x\in G$, since $\pi_0(x)$ is a linear isometric
automorphism of $\ell_p$ (and $p\neq 2$) we have
$\pi_0(x)e_i=\theta^x_ie_{\tau^x(i)}$ for all $i\in \N$, where
$\{e_i\}_{i=1}^\infty$ is the standard coordinate basis of $\ell_p$,
the function $\tau^x:\N\to \N$ is one-to-one and onto and
$|\theta^x|\equiv 1$. Define $\pi(x)\in \Isom\left(L_p\right)$ and
$f:G\to L_p$ by setting for $h\in L_p$ and $t\in
\left[2^{-i},2^{-i+1}\right]$,
$$
\pi(x)h(t)\coloneqq\theta^x_ih\left(2^{i-\tau^x(i)}t\right)\quad
\mathrm{and}\quad f(x)(t)=2^{i/p}\langle f_0(x), e_i\rangle.
$$
It is immediate to check that $\pi,f$ satisfy the assertion of
Lemma~\ref{lem:atomic}.

It remains to deal with the case $L_p(\mu,\FF_\infty)=L_p\oplus
\ell_p(S)$ where $S$ is a nonempty set which is finite or countable.
In this case we use Lamperti's theorem once more to deduce that for
each $x\in G$ the linear isometric automorphism $\pi_0(x)$ maps
disjoint functions to disjoint functions, and therefore it maps
indicators of atoms to indicators of atoms. Hence $\pi_0(x)L_p=L_p$
and $\pi_0(x)\ell_p(S)=\ell_p(S)$. Now, as above
$\pi_0(x)\upharpoonright_{\ell_p(S)}$ must correspond (up to changes
of sign) to a permutation of the coordinates. Hence, denoting the
projection from $L_p\oplus \ell_p(S)$ onto $L_p$ by $Q$, the same
reasoning as above shows that there exists a homomorphism $\pi':G\to
L_p$ and $f'\in Z^1(G,\pi')$ such that for all $x\in G$ we have
$\|f'(x)\|_{L_p}=\|f_0(x)-Qf_0(x)\|_{\ell_p(S)}$. It follows that if
we define $\pi(x)\in \Isom\left(L_p\oplus L_p\right)$ by
$\pi(x)=\pi_0(x)\upharpoonright_{L_p}\oplus \pi'$ and $f:G\to
L_p\oplus L_p$ by $f(x)=(Qf)\oplus f'$ then (using the fact that
$L_p\oplus L_p$ is isometric to $L_p$) the assertion of
Lemma~\ref{lem:atomic} follows in this case as well.
\end{proof}

\begin{proof}[Proof of Theorem~\ref{gromov}] Let $\{F_n\}_{n=0}^\infty$ be a F\o lner sequence for $\Gamma$ and
let $\mathscr U$ be a free ultrafilter on $\N$. Define
$\M:\ell_\infty(\Gamma)\to \R$ by
\begin{equation}\label{eq:def special mean}
\M(f)=\lim_{\mathscr U} \frac{1}{|F_n|}\sum_{x\in F_n} f(x).
\end{equation}
It follows immediately from the F\o lner condition that $\M$ is an
invariant mean on $\Gamma$, i.e., a linear functional
$\M:\ell_\infty(\Gamma)\to \R$ which maps the constant $1$ function
to $1$, assigns non-negative values to non-negative functions and
$\M(R_yf)=\M(f)$ for every $y\in \Gamma$, where $R_yf(x)=f(xy)$ (we
refer to~\cite{Wag93} for proofs and more information on this
topic). Define a semi-norm $\|\cdot\|_{\M,p}$ on
$\ell_\infty(\Gamma,X)$ (the space of all $X$-valued bounded
functions on $\Gamma$) by:
\begin{eqnarray*}\label{eq:def seminorm}
f\in \ell_\infty(\Gamma,X)\implies \|f\|_{\M,p}\coloneqq
\left(\M\left(\|f\|_X^p\right)\right)^{1/p}.
\end{eqnarray*}
This is indeed a semi-norm  since invariant means satisfy H\"older's
inequality (see for example Lemma 2 on page 119 of Section III.3
in~\cite{DS88}). Hence if we let $W=\{f\in \ell_\infty(\Gamma,X):\
\|f\|_{\M,p}=0\}$ then $W$ is a linear subspace and $Y_0\coloneqq
\ell_\infty(\Gamma,X)/W$ is a normed space. Let $Y$ be the
completion of $Y_0$.

By a slight abuse of notation we denote for $y\in \Gamma$ and $f\in
\ell_\infty(\Gamma,X)$, $R_y(f+W)\coloneqq R_yf+W$, which is a well
defined linear isometric automorphism of $Y_0$ since
$\|\cdot\|_{\M,p}$ is $R_y$-invariant. Moreover $R$ is an action of
$\Gamma$ on $Y_0$ by linear isometric automorphisms, and it
therefore extends to such an action on $Y$ as well.

Note that by virtue of the upper bound in~\eqref{eq:assumption
uniform} for every $g,x\in \Gamma$ we have
$\|\psi(xg)-\psi(x)\|_X\le \Omega\left(d_\Gamma(g,e_\Gamma)\right)$.
Thus $R_g\psi-\psi\in \ell_\infty(\Gamma,X)$ and we can define
$\Psi(g)\in Y$ by $\Psi(g)=(R_g\psi-\psi)+W$. Then
$\Psi\in Z^1(\Gamma,R)$. Moreover $\Psi(e_\Gamma)=0$ and for every
$g_1,g_2\in \Gamma$ we have
\begin{equation}\label{eq:modulus tilde}
\nonumber\left\|\Psi(g_1)-\Psi(g_2)\right\|_{Y}=
\left(\M\left(\left\|R_{g_1}\psi-R_{g_2}\psi\right\|_X^p\right)\right)^{1/p}\stackrel{\eqref{eq:assumption
uniform}}{\in}\left[\omega\left(d_{\Gamma}\left(g_1,g_2\right)\right),\Omega\left(d_{\Gamma}\left(g_1,g_2\right)\right)\right].
\end{equation}
This establishes~\eqref{eq:goal uniform}, so it remains to prove the
required properties of $Y$, i.e., that it is finitely representable
in $\ell_p(X)$ and that it is an $L_p(\nu)$ space if $X$ is an
$L_p(\mu)$ space.

Up to this point we did not use the fact that $\M$ was constructed
as an ultralimit of averages along F\o lner sets as in~\eqref{eq:def
special mean} and we could have taken $\M$ to be any invariant mean
on $\Gamma$. But now we will use the special structure of $\M$ to
relate the space $Y$ to a certain ultraproduct of Banach spaces. We
do not know whether the properties required of $Y$ hold true for
general invariant means on $\Gamma$. We did not investigate this
question since it is irrelevant for our purposes.

For each $n\ge 0$ let $X_n$ be the Banach space $X^{F_n}$ equipped
with the norm:
$$
\psi:F_n\to X\implies \|\psi\|_{X_n}=\left(\frac{1}{|F_n|}\sum_{h\in
F_n} \|\psi(h)\|_{X}^p\right)^{1/p}.
$$
Let $\widetilde X$ be the ultraproduct $\left(\prod_{n=0}^\infty
X_n\right)_{\mathscr U}$. We briefly recall the definition of
$\widetilde X$ for the sake of completeness
(see~\cite{D-CK70,D-CK72,Hein80} for more details and complete
proofs of the ensuing claims). Let $Z$ be the space
$\left(\prod_{n=0}^\infty X_n\right)_\infty$, i.e., the space of all
sequences $x=(x_0,x_1,x_2,\ldots)$ where $x_n\in X_n$ for each $n$
and $\|x\|_Z\coloneqq\sup_{n\ge 0} \|x_n\|_{X_n}<\infty$. Let
$N\subseteq Z$ be the subspace consisting of sequences
$(x_n)_{n=0}^\infty$ for which $\lim_{\mathscr U}\|x_n\|_{X_n}=0$.
Then $N$ is a closed subspace of $Z$ and $\widetilde X$ is the
quotient space $Z/N$, equipped with the usual quotient norm. We
shall denote an element of $\widetilde X$, which is an equivalence
class of elements in $Z$, by $[x_n]_{n=0}^\infty$. The norm on
$\widetilde X$ is given by the concrete formula $
\left\|[x_n]_{n=0}^\infty\right\|_{\widetilde X}=\lim_{\mathscr
U}\|x_n\|_{X_n}$.

Since by construction each of the spaces $X_n$ embeds isometrically
into $\ell_p(X)$, by classical ulraproduct theory
(see~\cite{Hein80}) $\widetilde X$ is finitely representable in
$\ell_p(X)$. Moreover, if $X=L_p(\mu)$ for some measure $\mu$ then,
as shown in~\cite{D-CK70,D-CK72,Hein80}, $\widetilde X=L_p(\tau)$
for some measure $\tau$.

Define $T:Y_0\to \widetilde X$ by
$T(f+W)=[f\upharpoonright_{F_n}]_{n=0}^\infty$. Then by construction
(and the definition of $W$) $T$ is well defined and is an isometric
embedding of $Y_0$ into $\widetilde X$. Hence also $Y$ embeds
isometrically into $\widetilde X$, and for ease of notation we will
identify $Y$ with $\overline{T(Y_0)}\subseteq \widetilde X$. It
follows in particular that $Y$ is finitely representable in
$\ell_p(X)$.

It remains to show that if $X=L_p(\mu)$ then $Y=L_p(\nu)$ for some
measure $\nu$ since once this is achieved we can apply
Lemma~\ref{lem:atomic} in order to replace $Y$ by $L_p$. We know
that in this case $\widetilde X=L_p(\tau)$ but we need to recall the
lattice structure on $\widetilde X$ in order to proceed (since we do
not know whether the action of $\Gamma$ on $Y$ extends to an action
of $\Gamma$ on $\widetilde X$ by isometric linear automorphisms).
Since each $X_n$ is of the form $L_p(\mu_n)$ for some measure
$\mu_n$, the ultraproduct $\widetilde X$ has a Banach lattice
structure whose positive cone is $\left\{[x_n]_{n=0}^\infty:\ x_n\ge
0\ \forall n\right\}$ and $[x_n]_{n=0}^\infty\wedge
[y_n]_{n=0}^\infty=[x_n\wedge y_n]_{n=0}^\infty$,
$[x_n]_{n=0}^\infty\vee [y_n]_{n=0}^\infty=[x_n\vee
y_n]_{n=0}^\infty$ (all of this is discussed in detail
in~\cite{Hein80}). The explicit embedding of $Y_0$ into $\widetilde
X$ ensures that $x\wedge y, x\vee y\in Y_0$ for all $x,y\in Y_0$.
Moreover if $x,y\in Y_0$ are disjoint, i.e., $|x|\wedge |y|=0$, then
$\|x+y\|_{\widetilde X}=\left(\|x\|_{\widetilde
X}^p+\|y\|_{\widetilde X}^p\right)^{1/p}$. These identities pass to
the closure $Y$ of $Y_0$ (since, for example, we know that
$\widetilde X=L_p(\tau)$ and therefore convergence in $\widetilde X$
implies almost everywhere convergence along a subsequence). This
shows that  the Banach space $Y$ is an abstract $L_p$ space, and
therefore by Kakutani's representation theorem~\cite{Kak41} (see
also the presentation in~\cite{LT79}) $Y=L_p(\nu)$ for some measure
$\nu$.
\end{proof}

\section{Open problems}

We list below several of the many interesting open questions related
to the computation of compression exponents.

\begin{ques}\label{Q:planar TSP}
Does $C_2\bwr \Z^2$ admit a bi-Lipschitz embedding into $L_1$?
\end{ques}

The significance of Question~\ref{Q:planar TSP} was explained in the
introduction. Since we know that $\alpha_1^*\left(C_2\bwr
\Z^2\right)=1$ the following question is more general
then~\ref{Q:planar TSP}:

\begin{ques}\label{Q:attained}
For which finitely generated groups $G$ and $p\ge 1$ is
$\alpha_p^*(G)$ attained?
\end{ques}

Somewhat less ambitiously than Question~\ref{Q:attained} one might
ask for meaningful conditions on $G$ which imply that
$\alpha_p^*(G)$ is attained. As explained in
Remark~\ref{rem:attained}, this holds true if $p>1$ and $G=C_2\bwr
H$ where $H$ is a finitely generated group with super-linear
polynomial growth which admits a bi-Lipschitz embedding into $L_p$.
In particular this holds true for $G=C_2\bwr \Z^2$ and $p>1$. Note
that not every group of polynomial growth $H$ admits a bi-Lipschitz
embedding into $L_1$, as shown by Cheeger and Kleiner~\cite{CK06}
when $H$ is the discrete Heisenberg group, i.e. the group of
$3\times 3$ matrices generated by the following symmetric set
$S\subseteq GL_3(\mathbb Q)$ and equipped with the associated word
metric:
\begin{equation*}
S=\left\{ \begin{pmatrix}
  1 & 1&0 \\
   0 & 1&0\\
   0&0&1
   \end{pmatrix},  \begin{pmatrix}
  1 & -1&0 \\
   0 & 1&0\\
   0&0&1
   \end{pmatrix},\begin{pmatrix}
  1 & 0&1 \\
   0 & 1&0\\
   0&0&1
   \end{pmatrix},\begin{pmatrix}
  1 & 0&-1 \\
   0 & 1&0\\
   0&0&1
   \end{pmatrix},\begin{pmatrix}
  1 & 0&0 \\
   0 & 1&1\\
   0&0&1
   \end{pmatrix},\begin{pmatrix}
  1 & 0&0 \\
   0 & 1&-1\\
   0&0&1
   \end{pmatrix}\right\}.
   \end{equation*}

Similarly to Question 7.1 in~\cite{NP07} one might ask the following
question:

\begin{ques}\label{Q:alpha beta}
Is it true that for every finitely generated amenable group $G$ and
every $p\in [1,2]$ we have $\alpha_p^*(G)=\frac{1}{p\beta_p^*(G)}$?
\end{ques}

It was shown in~\cite{ADS06} the for every $\alpha\in [0,1]$ there
exists a finitely generated group $G$ such that
$\alpha_2^*(G)=\alpha$. Since there are only countably many finitely
presented groups the set $$\Omega_p^*\coloneqq \{\alpha_p^*(G):\ G\
\mathrm{finitely \ presented}\}\subseteq [0,1]$$ is at most
countable for every $p\in [1,\infty)$ (though it seems to be unknown
whether or not it is infinite). One can similarly define the set
$\Omega_p^\#$ of possible equivariant compression exponents of
finitely presented groups. Several restrictions on the relations
between these sets follow from the following inequalities which hold
for every finitely generated group $G$: for every $p\ge 1$ we have
$\alpha_p^*(G)\ge \alpha_2^*(G)$ since $L_2$ embeds isometrically
into $L_p$ (see e.g.~\cite{Woj91}). Similarly Lemma 2.3
in~\cite{NP07} states that $\alpha_p^\#(G)\ge \alpha_2^\#(G)$. Since
$L_q$ embeds isometrically into $L_p$ for $1\le p\le q\le 2$
 (see~\cite{WW75}) we also know that in this case $\alpha_p^*(G)\ge
\alpha_q^*(G)$. For every $1\le p\le q$ the metric space
$\left(L_p,\|x-y\|_p^{p/q}\right)$ embeds isometrically into $L_q$
(for $1\le p\le q\le 2$ this follows from~\cite{BD-CK65,WW75} and
for the remaining range this is proved in Remark 5.10
of~\cite{MN04}). Hence if $p\in [1,2]$ and $p\le q$ then
$\alpha_q^*(G)\ge \max\left\{\frac{p}{q},\frac{p}{2}\right\}\cdot
\alpha_p^*(G)$ and if $2\le p\le q$ then  $\alpha_q^*(G)\ge
\frac{p}{q}\alpha_p^*(G)$.


\begin{ques}\label{Q:Omega}
Evaluate the (at most countable) sets $\Omega_p^*,\Omega_p^\#$. Is
$\Omega_p^*$ finite or infinite? How do the sets
$\Omega_p^*,\Omega_p^\#$ vary with $p$? Is it true that
$\Omega_p^*=\Omega_p^\#$?
\end{ques}

In this paper we computed $\alpha_p^*((\Z\bwr\Z)_0,d_{\Z\bwr \Z})$.
Note that the metric on the zero section $(\Z\bwr\Z)_0$ is not
equivalent to a geodesic metric. This fact makes it meaningful to
consider embeddings of $((\Z\bwr\Z)_0,d_{\Z\bwr \Z})$ into $L_p$
which are not necessarily Lipschitz, leading to the following
question:

\begin{ques}\label{Q:two exponents} For every $\alpha_1>0$ evaluate
the supremum over $\alpha_2\ge 0$ such that there exists an
embedding $f:(\Z\bwr\Z)_0\to L_p$ which satisfies
$$
x,y\in (\Z\bwr\Z)_0\implies cd_{\Z\bwr\Z}(x,y)^{\alpha_2}\le
\|f(x)-f(y)\|_p\le d_{\Z\bwr\Z}(x,y)^{\alpha_1},
$$
for some constant $c$.
\end{ques}
We believe that the methods of the present paper can be used to
answer Question~\ref{Q:two exponents} at least for some additional
values of $\alpha_1$ (we dealt here only with $\alpha_1=1$), but we
did not pursue this research direction.

\begin{ques}
The present paper contributes methods for evaluating compression
exponents of wreath products $G\bwr H$ in terms of the compression
exponents of $G$ and $H$. This continues the lines of research
studied in~\cite{Gal08,ADS06,Tess06,SV07,ANP07,NP07,CSV07}. It would
be of great interest (and probably quite challenging) to design such
methods for more general semi-direct products $G\rtimes H$.
\end{ques}

In Theorem~\ref{thm:polywreath} we computed $\alpha_p^*(C_2\bwr H)$
when $H$ has polynomial growth. It seems likely that our methods
yield non-trivial compression bounds also when $H$ has intermediate
growth. But, it would be of great interest to design methods which
deal with the case when $H$ has exponential growth. A simple example
of this type is the group $C_2\bwr (C_2\bwr\Z)$, for which we do not
even know whether the Hilbert compression exponent is positive.

\begin{ques}\label{Q:lp}
In our definition of $L_p$ compression we considered embeddings into
$L_p$ because it contains isometrically all separable $L_p(\mu)$
spaces. Nevertheless, the embeddings that we construct take values
in the sequence space $\ell_p$. Does there exist a finitely
generated group $G$ for which $\alpha_p^*(G)\neq
\alpha_{\ell_p}^*(G)$? Is the $\ell_p$ compression exponent of a net
in $L_p$ equal to $1$? Note that for $p\neq 2$ the function space
$L_p$ does not admit a bi-Lipschitz embedding into the sequence
space $\ell_p$---this follows via a differentiation argument
(see~\cite{BL00}) from the corresponding statement for linear
isomorphic embeddings (see~\cite{Pel60}).
\end{ques}

The subtlety between embeddings into $L_p$ and embeddings into
$\ell_p$ which is highlighted in Question~\ref{Q:lp} was pointed out
to us by Marc Bourdon.

\bibliographystyle{abbrv}
\bibliography{wreath-general}

\end{document}